\tikzset{ext/.style={circle, draw,inner sep=1pt},int/.style={circle,draw,fill,inner sep=1pt},nil/.style={inner sep=1pt}}
\tikzset{exte/.style={circle, draw,inner sep=3pt},inte/.style={circle,draw,fill,inner sep=3pt}}
\tikzset{diagram/.style={matrix of math nodes, row sep=3em, column sep=2.5em, text height=1.5ex, text depth=0.25ex}}
\tikzset{diagram2/.style={matrix of math nodes, row sep=0.5em, column sep=0.5em, text height=1.5ex, text depth=0.25ex}}
\theoremstyle{plain}
\newtheorem{prop}[subsubsection]{Proposition}
\newtheorem{thm}[subsubsection]{Theorem}
\newtheorem{cor}[subsubsection]{Corollary}
\numberwithin{equation}{subsubsection}
\title[Little discs operads, graph complexes (\dots)]{Little discs operads, graph complexes and Grothendieck--Teichm\"uller groups}
\date{November 29, 2018}
\author{Benoit Fresse}
\address{Univ. Lille, CNRS, UMR 8524 - Laboratoire Paul Painlev\'e, F-59000 Lille, France}
\email{Benoit.Fresse@univ-lille.fr}
\thanks{This research is supported in part by Labex ``CEMPI'' ANR-11-LABX-0007-01.}
\subjclass{Primary: 55P48; Secondary: 55P62, 57R40, 20F36}
\DeclareMathOperator{\kk}{\mathbb{k}}   
\DeclareMathOperator{\NN}{\mathbb{N}}   
\DeclareMathOperator{\FF}{\mathbb{F}}
\DeclareMathOperator{\ZZ}{\mathbb{Z}}   
\DeclareMathOperator{\QQ}{\mathbb{Q}}   
\DeclareMathOperator{\RR}{\mathbb{R}}   
\DeclareMathOperator{\CC}{\mathbb{C}}
\DeclareMathOperator{\DD}{\mathbb{D}}   
\DeclareMathOperator{\Sphere}{\mathbb{S}}   
\DeclareMathOperator{\hofib}{hofib}     
\DeclareMathOperator{\CCat}{\mathcal{C}}            
\DeclareMathOperator{\Mod}{\mathcal{M}\mathit{od}}  
\DeclareMathOperator{\BiMod}{\mathcal{B}\mathit{i}\mathcal{M}\mathit{od}}  
\DeclareMathOperator{\Top}{\mathcal{T}\mathit{op}}  
\DeclareMathOperator{\sSet}{\mathit{s}\mathcal{S}\mathit{et}}   
\DeclareMathOperator{\ComCat}{\mathcal{C}\mathit{om}}
\DeclareMathOperator{\Cat}{\mathcal{C}\mathit{at}}
\DeclareMathOperator{\Grd}{\mathcal{G}\mathit{rd}}
\DeclareMathOperator{\MT}{\mathit{MT}}
\DeclareMathOperator{\GC}{\mathit{GC}}      
\DeclareMathOperator{\HGC}{\mathit{HGC}}    
\DeclareMathOperator{\Hopf}{\mathcal{H}\mathit{opf}}    
\DeclareMathOperator{\Op}{\mathcal{O}\mathit{p}}    
\DeclareMathOperator{\dg}{\mathit{dg}}      
\DeclareMathOperator{\gr}{\mathit{gr}}      
\DeclareMathOperator{\Mor}{\mathtt{Mor}}    
\DeclareMathOperator{\Aut}{\mathtt{Aut}}    
\DeclareMathOperator{\Hom}{\mathtt{Hom}}    
\DeclareMathOperator{\BiDer}{\mathtt{BiDer}}    
\DeclareMathOperator{\Map}{\mathtt{Map}}    
\DeclareMathOperator{\Emb}{\mathtt{Emb}}    
\DeclareMathOperator{\Imm}{\mathtt{Imm}}    
\DeclareMathOperator{\GT}{\mathit{GT}}      
\DeclareMathOperator{\GRT}{\mathit{GRT}}
\DeclareMathOperator{\grt}{\mathfrak{grt}}
\DeclareMathOperator{\Gal}{\mathit{Gal}}
\DeclareMathOperator{\id}{\mathit{id}}      
\DeclareMathOperator{\unit}{\mathbb{1}}     
\DeclareMathOperator{\Ob}{\mathtt{Ob}}
\DeclareMathOperator{\FreeOp}{\mathbb{F}}   
\DeclareMathOperator{\Sym}{\mathbb{S}}      
\DeclareMathOperator{\LLie}{\mathbb{L}}     
\DeclareMathOperator{\UFree}{\mathbb{U}}
\DeclareMathOperator{\GLike}{\mathbb{G}}
\DeclareMathOperator{\ecell}{\mathbb{e}}        
\DeclareMathOperator{\SO}{\mathtt{SO}}
\DeclareMathOperator{\Sing}{\mathtt{Sing}}      
\DeclareMathOperator{\DGB}{\mathtt{B}}          
\DeclareMathOperator{\DGC}{\mathtt{C}}          
\DeclareMathOperator{\DGF}{\mathtt{F}}          
\DeclareMathOperator{\DGG}{\mathtt{G}}
\DeclareMathOperator{\DGH}{\mathtt{H}}          
\DeclareMathOperator{\DGHH}{\mathtt{HH}}        
\DeclareMathOperator{\DGR}{\mathtt{R}}
\DeclareMathOperator{\DGW}{\mathtt{W}}          
\DeclareMathOperator{\DGOmega}{\mathtt{\Omega}}          
\DeclareMathOperator{\DGGamma}{\mathtt{\Gamma}}
\DeclareMathOperator{\DGMC}{\mathtt{MC}}
\DeclareMathAlphabet{\mathsfit}{OT1}{cmss}{m}{sl}   
\DeclareMathOperator{\COp}{\mathsfit{C}}
\DeclareMathOperator{\DOp}{\mathsfit{D}}        
\DeclareMathOperator{\EOp}{\mathsfit{E}}        
\DeclareMathOperator{\FMOp}{\mathsfit{FM}}      
\DeclareMathOperator{\FOp}{\mathsfit{F}}        
\DeclareMathOperator{\KOp}{\mathsfit{K}}
\DeclareMathOperator{\MOp}{\mathsfit{M}}
\DeclareMathOperator{\NOp}{\mathsfit{N}}
\DeclareMathOperator{\POp}{\mathsfit{P}}        
\DeclareMathOperator{\QOp}{\mathsfit{Q}}        
\DeclareMathOperator{\ROp}{\mathsfit{R}}        
\DeclareMathOperator{\SOp}{\mathsfit{S}}
\DeclareMathOperator{\LambdaOp}{\mathsfit{\Lambda}}
\DeclareMathOperator{\PiOp}{\mathsfit{\Pi}}
\DeclareMathOperator{\OmegaOp}{\mathsfit{\Omega}}
\DeclareMathOperator{\CstOp}{\mathsfit{Cst}}
\DeclareMathOperator{\AsOp}{\mathsfit{As}}      
\DeclareMathOperator{\PoisOp}{\mathsfit{Pois}}      
\DeclareMathOperator{\GraphOp}{\mathsfit{Graphs}}   
\DeclareMathOperator{\ICG}{\mathsfit{ICGraphs}}
\DeclareMathOperator{\CoB}{\mathsfit{CoB}}
\DeclareMathOperator{\PaB}{\mathsfit{PaB}}
\DeclareMathOperator{\CD}{\mathsfit{CD}}
\DeclareMathOperator{\PaCD}{\mathsfit{PaCD}}
\DeclareMathOperator{\palg}{\mathfrak{p}}
\DeclareMathOperator{\galg}{\mathfrak{g}}
\begin{document}

\begin{abstract}
This paper is a survey on the homotopy theory of $E_n$-operads written for the new handbook of homotopy theory.
\textcolor[rgb]{1.00,0.00,0.00}{First draft.}
\end{abstract}

\maketitle


\setcounter{section}{0}

\section*{Introduction}

The operads of little discs (and the equivalent operads of little cubes)
were introduced in topology, in the works of Boardman--Vogt~\cite{BoardmanVogtAnnouncement,BoardmanVogt} and May~\cite{May},
for the recognition of iterated loop spaces.
We refer to the paper~\cite{CarlssonMilgram}, in the first handbook of algebraic topology, for an account of this theory.

The purpose of this article is to survey new applications of the little discs operads
which were motivated by the works of Kontsevich~\cite{KontsevichFormalityConj,KontsevichMotives}
and Tamarkin~\cite{Tamarkin}
on the deformation-quantization of Poisson manifolds
in mathematical physics on the one hand,
and by the Goodwillie--Weiss embedding calculus in topology~\cite{GoodwillieWeissEmbeddings,WeissEmbeddings}
on the other hand.
Let us give a brief overview of the connection between the little discs operads and these subjects
before explaining the objectives of this paper with more details.

We will recall the precise definition of the little discs operads later on.
For the moment, we just need to know that an operad (in topological spaces, in a category of modules, {\dots})
is a structure which governs a category of algebras. The operad of little $n$-discs, denoted by $\DOp_n$,
where $n = 1,2,\dots$ is a dimension parameter, is a particular instance of an operad defined in the category of topological spaces.

\medskip
The deformation--quantizations of a Poisson manifold $M$ are determined by solutions of a Maurer--Cartan equation
in a differential graded Lie algebra of polydifferential operators
on $M$.
The existence of deformation--quantizations can be deduced from the observation that this differential graded Lie algebra
of polydifferential operators is quasi-isomorphic to a differential graded Lie algebra
of polyvector fields,
which governs deformations of Poisson structures.
In the algebraic setting and for an affine space, this statement is equivalent to the claim that the Hochschild cochain complex
of a polynomial algebra $L = \DGC^*_{\DGHH}(\kk[x_1,\dots,x_n])$,
where $\kk$ is a field of characteristic zero, is quasi-isomorphic to its cohomology (is formal) as a differential graded Lie algebra.
Indeed, the Hochschild cochain complex can be regarded as an algebraic analogue of the Lie algebra of polydifferential operators of manifolds,
while the Hochschild-Kostant-Rosenberg theorem asserts that the Hochschild cohomology of a polynomial
algebra $\DGHH^*(\kk[x_1,\dots,x_n]) = \DGH^*(\DGC^*_{\DGHH}(\kk[x_1,\dots,x_n]))$
is identified with an algebra of vector fields (in the algebraic sense) $T = \kk[x_1,\dots,x_n,\xi_1,\dots,\xi_n]$,
where we set $\xi_i = \partial/\partial x_i$.
The counterpart of the Lie bracket of polydifferential operations for the Hochschild cochain complex is given by a Lie algebra structure
which was defined by Gerstenhaber in his work in deformation theory (see~\cite{Gerstenhaber}).
In~\cite{KontsevichFormalityConj}, Kontsevich gives a direct proof of the formality of the Hochschild cochain complex $L = \DGC^*_{\DGHH}(\kk[x_1,\dots,x_n])$
by making explicit an $L_{\infty}$-morphism that realizes the cohomology isomorphism
of the Hochschild-Kostant-Rosenberg theorem at the cochain level
(an $L_{\infty}$-morphism is a homotopy version of the notion of a morphism of Lie algebras).

In~\cite{KontsevichMotives,Tamarkin}, Kontsevich and Tamarkin give new proofs of the formality of the Hochschild cochain complex.
The idea is that the Lie algebra structure of the Hochschild cochain complex can be integrated
in the structure of a Poisson algebra up to homotopy and therefore, one can compare homotopy Poisson algebra structures
rather than just Lie algebra structures.
Indeed, one can prove that the Hochschild cohomology of a polynomial algebra $\DGHH^*(\kk[x_1,\dots,x_n])$
is intrinsically formal as a homotopy Poisson algebra:
a natural cohomology theory, which governs the obstructions to constructing formality quasi-isomorphisms,
vanishes when we take the full strongly homotopy Poisson algebra structure
of the Hochschild cohomology into account (and not only the Lie algebra structure).
By using this method, we can actually establish the general result that any homotopy Poisson algebra whose cohomology is isomorphic to the Poisson algebra
of polyvectorfields $T = \kk[x_1,\dots,x_n,\xi_1,\dots,\xi_n]$ (like the Hochschild cochain complex $L = \DGC^*_{\DGHH}(\kk[x_1,\dots,x_n])$)
is quasi-isomorphic to this object $T = \kk[x_1,\dots,x_n,\xi_1,\dots,\xi_n]$
as a homotopy Poisson algebra. (In fact, we also need to assume that our objects are equipped with a extra action of the group
of affine transformations of the affine space $\kk^n$
to get this rigidy result,
but this assumption is also fulfilled in the case of the Hochschild cochain complex.)
In what follows, we refer to this second approach of the proof of the existence of deformation-quantizations as the Kontsevich--Tamarkin approach
in order to distinguish this second generation of proofs from the Kontsevich initial construction.

In order to go further into our explanations, we should specify that the Lie bracket
of the Hochschild cochain complex (and the Lie bracket of polydifferential operators equivalently)
decreases cohomological degrees by one. We explicitly have $[-,-]: \DGC_{\DGHH}^i(A)\otimes\DGC_{\DGHH}^j(A)\rightarrow\DGC_{\DGHH}^{i+j-1}(A)$,
for all degrees $i,j\in\NN$.
Thus, in our study, we actually consider a form of graded Poisson structure where the Poisson bracket
has cohomological degree $-1$ (equivalently, has homological degree $1$).
In what follows, we generally use the expression $2$-Poisson algebra to refer to such a graded Poisson structure,
and more generally we call $n$-Poisson algebra a graded Poisson algebra
equipped with a bracket of cohomological degree $1-n$ (equivalently, of homological degree $n-1$). The name Gerstenhaber algebra is also used
in the literature for this category of graded Poisson algebras
when $n=2$.
The category of $n$-Poisson algebras is associated to an operad in graded modules $\PoisOp_n$
which can actually be identified with the singular homology of the topological operad
of little $n$-discs $\DGH_*(\DOp_n) = \DGH_*(\DOp_n,\kk)$.
The application of the little discs operad in deformation-quantization
comes from this relationship:
\begin{equation*}
\PoisOp_n\cong\DGH_*(\DOp_n).
\end{equation*}

The proof that the Hochschild cochain complex is equipped with the structure
of a Poisson algebra up to homotopy can actually be divided in two statements which are interesting for their own.
To formulate the ideas of these constructions, we have to consider the class of $E_n$-operads, which consists of the operads $\EOp_n$
that are (weakly) homotopy equivalent to the little $n$-discs operad
$\EOp_n\sim\DOp_n$
rather than the singled out object $\DOp_n$.
To be more precise, the notion of an $E_n$-operad makes sense in many contexts.
For the applications to the deformation-quantization problem,
we consider the class of $E_n$-operads in differential graded modules which consists of the operads
that are quasi-isomorphic to the operad of singular chains
on the little $n$-discs operad $\DGC_*(\DOp_n) = \DGC_*(\DOp_n,\kk)$,
where we take the ground ring of our module category $\kk$
as coefficients.

First, one can prove that the Hochschild cochain complex naturally inherits an action of an $E_2$-operad.
This statement is known as the Deligne conjecture in the literature and holds without any assumption
on the ground ring (we refer to~\cite{KontsevichSoibelman,McClureSmithFirst}
for the first proofs of this result).
Then one can prove that the operad of singular chains on the little $2$-discs operad $\DGC_*(\DOp_2) = \DGC_*(\DOp_2,\kk)$
is quasi-isomorphic to the operad of $2$-Poisson algebras $\PoisOp_2$
as an operad in differential graded modules
when we pass to rational coefficients $\kk = \QQ$.
This result, which can be interpreted as a formality result for the singular chains on the little $2$-discs operad,
implies that the category of algebras over any model of $E_2$-operad
is equivalent to the category of $2$-Poisson algebras
up to homotopy. We just apply this equivalence and the result of the Deligne conjecture
in order to get that the Hochschild cochain complex can be identified with an object
of the category of homotopy $2$-Poisson algebras,
so that we can proceed with the Kontsevich--Tamarkin approach
of the construction of deformation-quantizations.
In passing, we get that we can parameterize deformation-quantization functors
by homotopy classes of formality quasi-isomorphisms
for the operad of little $2$-discs.

We will see that we can associate a formality quasi-isomorphism for the little $2$-discs operad to any Drinfeld associator,
a notion introduced by Drinfeld to define universal deformation functors in quantum group theory (see~\cite{Drinfeld}).
The set of Drinfeld's associators inherits an action of the (rational) Grothendieck--Teichm\"uller group $\GT(\QQ)$.
This object $\GT(\QQ)$ was also defined by Drinfeld in the of quantum group theory,
but the idea of the definition of the Grothendieck--Teichm\"uller group
goes back to the Grothendieck program
in Galois theory (see~\cite{Grothendieck}).
In any case, the correspondence between deformation-quantization functors, formality quasi-isomorphisms for the little $2$-discs operad
and Drinfeld's associators, which results from the Kontsevich--Tamarkin approach,
hints that the set of deformation-quantization functors of the affine space
inherits an action of the Grothendieck--Teichm\"uller group.
This action has been made explicit by Thomas Willwacher and his collaborators
by using a graph complex description
of the Grothendieck--Teichm\"uller Lie algebra (see~\cite{WillwacherGraphs}).
This correspondence between the Grothendieck--Teichm\"uller Lie algebra
and graph complexes is one of the main outcomes
of the work that we intend to explain
in this paper.

To complete this overview of the applications of little $n$-discs operads to the deformation-quantization problem,
let us mention that the formality of the operad of little $2$-discs
can be extended to the higher dimensional little discs operads $\DOp_n$, $n = 2,3,\dots$,
and that higher dimensional generalizations of the deformation-quantization problem,
which involve structures governed by any class
of $n$-Poisson algebras,
have been studied by Calaque--Pantev--To\"en--Vaqui\'e--Vezzosi in the realm of derived algebraic geometry (see~\cite{CalaqueAl}).
Regarding the formality, we explicitly have a zigzag of quasi-isomorphisms
in the category of operads in differential graded modules
\begin{equation*}
\DGC_*(\DOp_n)\xleftarrow{\sim}\cdot\xrightarrow{\sim}\DGH_*(\DOp_n),
\end{equation*}
for all $n = 2,3,\dots$,
where we consider the homology of the operad little $n$-discs $\DGH_*(\DOp_n)$ as an operad in differential graded modules
equipped with a trivial differential. Recall also that we have an identity $\PoisOp_n = \DGH_*(\DOp_n)$
between this homology operad $\DGH_*(\DOp_n)$ and the operad of $n$-Poisson algebras $\PoisOp_n$.
We go back to this subject in the next part of this introduction.
In short, we are going to see that we have a refinement of the above formality result in the setting
of the Sullivan rational homotopy theory and we will explain that this enhanced formality result enables us to use
the $n$-Poisson operad $\PoisOp_n$
in order to construct a model for a rationalization of the little $n$-discs operad
in the category of spaces $\DOp_n^{\QQ}$.
We actually use this rational model to compute the rational homotopy type of embedding spaces.

The connection of the operads of little discs with the embedding calculus comes from certain descriptions
of the Goodwillie--Weiss towers, which are towers of ``polynomial'' approximations
of the embedding spaces $\Emb(M,N)$
associated to manifolds (see~\cite{GoodwillieWeissEmbeddings,WeissSurvey}). We just provide an overview of this connection
and we refer to Arone--Ching's paper, in this handbook volume, for a more comprehensive introduction
to the embedding calculus.

To make our account more precise, we focus on the particular case of Euclidean spaces $M = \RR^m$ and $N = \RR^n$,
and we consider the spaces of embeddings with compact support $\Emb_c(\RR^m,\RR^n)$.
The elements of this space are embeddings $f: \RR^m\hookrightarrow\RR^n$
which agree with a fixed affine map outside a compact domain.
We have an obvious map $\Emb_c(\RR^m,\RR^n)\rightarrow\Imm_c(\RR^m,\RR^n)$ where $\Imm_c(\RR^m,\RR^n)$
is a space of immersions with compact support (the obvious immersion counterpart
of the space of embeddings with compact support).
We use the notation $\overline{\Emb}_c(\RR^m,\RR^n)$
for the homotopy fiber of this map:
\begin{equation*}
\overline{\Emb}_c(\RR^m,\RR^n) := \hofib(\Emb_c(\RR^m,\RR^n)\rightarrow\Imm_c(\RR^m,\RR^n)).
\end{equation*}
We then have a weak-equivalence
$T_k\overline{\Emb}_c(\RR^m,\RR^n)\sim\DGOmega^{m+1}\Map_{\Op^{\leq k}}^h(\DOp_m,\DOp_n)$,
where $T_k\overline{\Emb}_c(\RR^m,\RR^n)$ is the $k$th level of the Goodwillie--Weiss tower for the embedding space $\overline{\Emb}_c(\RR^m,\RR^n)$,
whereas $\Map_{\Op^{\leq k}}^h(-,-)$ is a derived mapping space functor on a category of truncated operads $\Op^{\leq k}$
and $\DGOmega^{m+1}(-)$ denotes the $m+1$-fold iterated loop space functor. When $n-m\geq 3$, we can use convergence statements
on the Goodwillie--Weiss tower (see~\cite{GoodwillieKlein,GoodwillieWeissEmbeddings})
to deduce from this result
that we have a weak-equivalence
\begin{equation*}
\overline{\Emb}_c(\RR^m,\RR^n)\sim\DGOmega^{m+1}\Map_{\Op}^h(\DOp_m,\DOp_n),
\end{equation*}
where we consider a derived mapping space on the full category of operads
in topological spaces $\Map_{\Op}^h(-,-)$.

These results represent the abutment of a series of research initiated by Sinha~\cite{SinhaKnot} in the case $m=1$,
and pursued by Arone--Turchin~\cite{AroneTurchin}
for general $m\geq 1$.
To be more precise, the latter article gives a description of the spaces $T_k\overline{\Emb}_c(\RR^m,\RR^n)$
in terms of mapping spaces of (truncated) operadic bimodules over the operad $\DOp_m$
instead of mapping spaces of operads.
The model of the spaces $T_k\overline{\Emb}_c(\RR^1,\RR^n)$ given in Sinha's work
is also equivalent to a description in terms of mapping spaces
of operadic bimodules over the operad $\DOp_1$.
But subsequent works on the subject have given the expression of the spaces $T_k\overline{\Emb}_c(\RR^m,\RR^n)$
in terms of $m+1$-fold iterated loop spaces of operadic mapping spaces
stated in the above formula.
These finer results have been obtained by Boavida--Weiss~\cite{BoavidaWeiss}, for all $m\geq 1$, by an improvement of the methods
used in the study of the Goodwillie--Weiss calculus of embedding spaces,
while other authors have obtained general results on mapping spaces of (truncated) operadic bimodules
which permit to recover this homotopy identity between the spaces $T_k\overline{\Emb}_c(\RR^m,\RR^n)$
and iterated loop spaces of operadic mapping spaces
from the form of the results
obtained by Sinha and Arone--Turchin in their works (see the articles of Dwyer--Hess~\cite{DwyerHess} and Turchin \cite{TurchinChords}
for the case $m=1$, and the article of Ducoulombier--Turchin~\cite{DucoulombierTurchin}
for the case of general $m\geq 1$).

The formality of the little discs operads over the rationals can be used to determine the rational homotopy type
of the operadic derived mapping spaces $\Map_{\Op}^h(\DOp_m,\DOp_n)$
which occur in this description of the embedding spaces.
For this purpose, we use that we have a rational homotopy equivalence
\begin{equation*}
\Map_{\Op}^h(\DOp_m,\DOp_n)\sim_{\QQ}\Map_{\Op}^h(\DOp_m,\DOp_n^{\QQ})
\end{equation*}
as soon as $n-m\geq 3$, where $\DOp_n^{\QQ}$ denotes a rationalization of the topological operad of little $n$-discs $\DOp_n$
which is given by an operadic extension of the Sullivan rational homotopy theory of spaces.
In fact, we use an improved version of the formality which implies that this rational operad $\DOp_n^{\QQ}$
has a model $\langle\DGH^*(\DOp_n)\rangle$
which is determined by the rational cohomology of the operad of little $n$-discs $\DGH^*(\DOp_n) = \DGH^*(\DOp_n,\QQ)$.
Thus, we eventually have an effective approach to compute the rational homotopy of the operadic mapping spaces $\Map_{\Op}^h(\DOp_m,\DOp_n)$
and hence, the rational homotopy of the embedding spaces $\overline{Emb}{}^c(\RR^m,\RR^n)$
by the Goodwillie--Weiss theory of embedding calculus.

Besides the homotopy of the mapping spaces $\Map_{\Op}^h(\DOp_m,\DOp_n^{\QQ})$, we can compute the rational homotopy type of the spaces
of homotopy automorphisms $\Aut_{\Op}^h(\DOp_n^{\QQ})$ in the category of operads.
We will actually see that $\Aut_{\Op}^h(\DOp_2^{\QQ})$ is weakly-equivalent to a semi-direct product $\GT(\QQ)\ltimes\SO(2)^{\QQ}$,
where $\GT(\QQ)$ is the rational Grothendieck--Teichm\"uller group,
and this statement gives a theoretical explanation for the occurrence of the rational Grothendieck--Teichm\"uller group
in deformation-quantization. Let us mention that we have an analogue of this result
in the realm of profinite homotopy theory. We explain both statements
in this paper.

In fact, the main objective of this survey is to explain the result of the computations of mapping spaces
and of homotopy automorphism spaces of operads.
We organize our account as follows.

We recall the basic definition of the operads of little discs in the first section of the paper.
We also state the fundamental results on the connection between the operadic mapping spaces
and the Goodwillie--Weiss tower of embedding spaces
with more details in this first section.
We devote the second section of the paper to the particular case $n=2$
of the homotopy theory of $E_n$-operads.
We will explain that the little $2$-discs operad has a model given by an operad shaped on braid groupoids
and we use this model to get the connection between the Grothendieck--Teichm\"uller group
and our homotopy automorphism space $\Aut_{\Op}^h(\DOp_2^{\QQ})$.

We explain the general formality result on $E_n$-operads in the third section and we tackle the applications
to the computation of the rational homotopy of mapping spaces
of $E_n$-operads in the fourth section.
We use graph complexes in the proof of the formality of $E_n$-operads.
We therefore retrieve graph complexes, the graph complexes alluded to in the title of this paper, in our expression
of the rational homotopy of the mapping spaces of $E_n$-operads. The ultimate goal of this survey is precisely to explain
this graph complex description of the rational homotopy type of the mapping spaces of $E_n$-operads.

\medskip
In general, in this paper, we use the term `differential graded module' and the language of differential graded algebra,
rather than the language of chain complexes.
In fact, we only use the expression `(co)chain complex' for specific constructions of differential graded modules,
like the singular complex of a topological space, the Hochschild cochain complex, {\dots}
For short, we also use the prefix `dg' for any category of structured objects (like dg-algebras, dg-operads, {\dots})
that we form within a base category of differential graded modules.

In what follows, we generally define a differential graded module (thus, a dg-module for short) as the structure,
equivalent to a (possibly unbounded) chain complex, which consists of a module $M$
equipped with a lower $\ZZ$-graded decomposition $M = \bigoplus_{n\in\ZZ} M_n$
and with a differential $\delta: M\rightarrow M$
such that $\delta(M_*)\subset M_{*-1}$. In some cases, we may deal with upper graded dg-modules, which are equivalent to cochain complexes.
In this case, we can use the standard correspondence $M_* = M^{-*}$ to convert the upper grading into a lower grading.
We equip the category of dg-modules with its standard tensor product so that this category inherits a symmetric monoidal structure,
with a symmetry operator defined by using the usual sign rule of homological algebra,

In our study, we freely use the language and the results of the theory of model categories.
In particular, we often use the generic term `weak-equivalence' for a quasi-isomorphism,
since the quasi-isomorphisms represent the class of weak-equivalences of model structures
in the usual categories of dg-objects (dg-modules, dg-algebras, dg-operads, {\dots}).

\section{The operads of little discs and the embedding calculus}\label{sec:background}
We review the basic definition of the operad of little $n$-discs in this section, as we explained in the account of the plan of this paper.
We also make explicit the isomorphism between the homology of the operads of little discs and the operads of graded Poisson algebras
which we mention in the introduction of this paper.
We review the general definition of the notion of an $E_n$-operad in a second step, and we eventually give some explanations
on the correspondence between the (derived) mapping spaces of the operads
of little discs (equivalently, of $E_n$-operads)
and the Goodwillie--Weiss tower of embedding spaces.

\subsubsection{The operad of little $n$-discs}\label{background:little-discs-operads}
Let $D(q,r)$ denote the disc of center $q\in\RR^n$ and radius $r>0$ in the Euclidean space $\RR^n$.
Let us also set $\DD^n = D(0,1)$ for the unit $n$-disc.
The little $n$-discs of the eponym operad are $n$-discs $D = D(q,r)$ such that $D\subset\DD^n$.
In the definition of the operad of little $n$-discs, we use that giving such an $n$-disc $D\subset\DD^n$
amounts to giving an embedding $c: \DD^n\hookrightarrow\DD^n$
such that $c(v) = r v + q$. We then have $D = c(\DD^n)$.
In fact, in what follows, we rather use this definition of a little $n$-disc in terms of an embedding $c: \DD^n\hookrightarrow\DD^n$,
and we just set $c = c(\DD^n)$ by an abuse of notation when we consider the image
of this map. In a sense, we only use that $c: \DD^n\hookrightarrow\DD^n$ is determined by giving this image $c = c(\DD^n)$
when we provide a graphical illustration of our constructions.

The operad of little $n$-discs is a structure defined by the collection of spaces $\DOp_n = \{\DOp_n(r),r\in\NN\}$
where $\DOp_n(r)$ consists of $r$-tuples of little $n$-discs $\underline{c} = \{c_1,\dots,c_r\}$
such that $\mathring{c}_i\cap\mathring{c}_j = \varnothing$
for all pairs $i\not=j$.
In other words, we can represent an element of the space $\DOp_n(r)$ as a configuration of little $n$-discs, numbered from $1$ to $r$,
and with non overlapping interiors
as in the following example:
\begin{equation}\label{eqn:background:little-discs-operads:element-picture}
\underline{c} = \vcenter{\hbox{\includegraphics{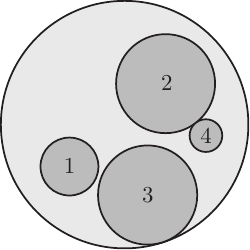}}}\in\DOp_2(4).
\end{equation}

The space $\DOp_n(r)$ inherits an action of the symmetric group on $r$-letters $\Sigma_n$.
For $\sigma\in\Sigma_n$ and $\underline{c} = \{c_1,\dots,c_r\}$,
we explicitly set $\sigma\cdot\underline{c} = (c_{\sigma^{-1}(1)},\dots,c_{\sigma^{-1}(r)})$.
In our graphical representation, this operation is given by applying the permutation $\sigma\in\Sigma_n$
to the index labelling of the little discs.

In addition, we have composition operations
\begin{equation}\label{eqn:background:little-discs-operads:composition-operations}
\circ_i: \DOp_n(k)\times\DOp_n(l)\rightarrow\DOp_n(k+l-1),
\end{equation}
defined for all $k,l\geq 0$, $i\in\{1,\dots,k\}$,
and which are given by the formula $\underline{a}\circ_i\underline{b} = (a_1,\dots,a_{i-1},a_i\circ b_1,\dots,a_i\circ b_l,a_{i+1},\dots,a_l)$,
for all $\underline{a} = (a_1,\dots,a_k)\in\DOp_n(k)$, $\underline{b} = (b_1,\dots,b_k)\in\DOp_n(l)$,
where we consider the composite of the map $a_i: \DD^n\rightarrow\DOp^n$ with the little disc embeddings $b_1,\dots,b_l: \DD^n\rightarrow\DOp^n$.
In our graphical representation, this operation is obtained by putting the configuration of little $n$-discs $\underline{b} = (b_1,\dots,b_k)$
in the $i$th little disc $a_i$ of the configuration $\underline{a} = (a_1,\dots,a_k)$
as depicted in the following figure:
\begin{equation}\label{eqn:background:little-discs-operads:composition-picture}
\vcenter{\hbox{\includegraphics{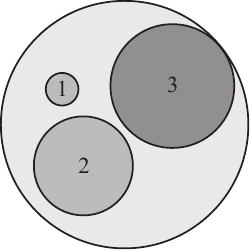}}}\;\circ_3\;\vcenter{\hbox{\includegraphics{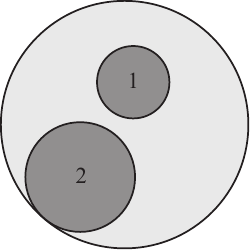}}}
\;=\;\vcenter{\hbox{\includegraphics{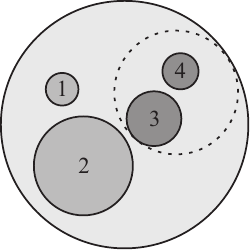}}}.
\end{equation}

These composition operations (\ref{eqn:background:little-discs-operads:composition-operations})
satisfy natural equivariance relations with respect to the action of the symmetric groups.
To be explicit, for $\sigma\in\Sigma_k$, $\tau\in\Sigma_l$, we have an identity of the form:
\begin{equation}\label{eqn:background:little-discs-operads:composition-equivariance}
(\sigma_*\cdot\underline{a})\circ_{\sigma(i)}(\tau\cdot\underline{b}) = (\sigma\circ_{\sigma(i)}\tau)\cdot(\underline{a}\circ_i\underline{b}),
\end{equation}
for a certain permutation $\sigma\circ_{\sigma(i)}\tau\in\Sigma_{k+l-1}$.
If we determine a permutation $\rho\in\Sigma_r$ by a sequence of values $\rho = (\rho(1),\dots,\rho(r))$,
then we precisely have $\sigma\circ_{\sigma(i)}\tau = (\sigma'(1),\dots,\sigma'(i-1),\tau'(1),\dots,\tau'(l),\sigma'(i+1),\dots,\sigma'(k))$,
where we set $\sigma'(x) = \sigma(x)$ if $\sigma(x)<\sigma(i)$ and $\sigma'(x) = \sigma(x)+l-1$ if $\sigma(x)>\sigma(i)$,
while we take $\tau'(y) = \tau(y) + i-1$ for all $y\in\{1,\dots,l\}$.

The composition operations also satisfy the following associativity relations:
\begin{equation}\label{eqn:background:little-discs-operads:composition-associativity}
(\underline{a}\circ_i\underline{b})\circ_{i+j-1}\underline{c} = \underline{a}\circ_i(\underline{b})\circ_j\underline{c})
\quad\text{and}
\quad(\underline{a}\circ_i\underline{b})\circ_{j+l-1}\underline{c} = (\underline{a}\circ_j\underline{c})\circ_i\underline{a}
\end{equation}
for all $\underline{a}\in\DOp_n(k)$, $\underline{b}\in\DOp_n(l)$, $\underline{c}\in\DOp_n(m)$,
where we assume $i\in\{1,\dots,k\}$ and $j\in\{1,\dots,l\}$ in the first case,
whereas we take $i,j\in\{1,\dots,k\}$ such that $i<j$
in the second case.
Besides, in $\DOp_n(1)$, we have a distinguished unit element $1\in\DOp_n(1)$,
the unit little $n$-disc, given by the identity embedding $\id: \DD^n\hookrightarrow\DD^n$,
such that the following unit relations hold in $\DOp_n(r)$:
\begin{equation}\label{eqn:background:little-discs-operads:composition-unit}
1\circ_1\underline{c} = \underline{c}\quad\text{and}\quad\underline{c}\circ_i 1 = \underline{c},
\end{equation}
for all $\underline{c}\in\DOp_n(r)$ and $i\in\{1,\dots,r\}$.

The operad of little $n$-discs is precisely the structure defined by the collection of little $n$-discs spaces $\DOp_n = \{\DOp_n(r),r\in\NN\}$
equipped with the unit element $1\in\DOp_n(1)$
and the composition products (\ref{eqn:background:little-discs-operads:composition-operations})
which satisfy the above relations (\ref{eqn:background:little-discs-operads:composition-equivariance}-\ref{eqn:background:little-discs-operads:composition-unit}).
(We go back to the general definition of an operad in the next paragraph.)
In the theory of iterated loop spaces, the elements $\underline{c}\in\DOp_n(r)$
are used to parameterize $r$-fold operations $\mu_{\underline{c}}: \DGOmega^n X^{\times r}\rightarrow\DGOmega^n X$,
where $\DGOmega^n X$ denotes the $n$-fold loop space
associated to a based space $X$.
For this purpose, we use that any element $\alpha\in\DGOmega^n X$ can be represented by a map $\alpha: \DD^n\rightarrow X$
such that $\alpha(\partial\DD^n) = *$,
and $\mu_{\underline{c}}: \DGOmega^n X^{\times r}\rightarrow\DGOmega^n X$
is defined by the obvious composition operation.

\subsubsection{The general notion of an operad}\label{background:operads}
The operad of little $n$-discs $\DOp_n$ is an instance of operad in the category of topological spaces,
but the notion of an operad makes sense in the general setting of a symmetric monoidal category $\CCat$,
with a tensor product $\otimes: \CCat\times\CCat\rightarrow\CCat$,
a unit object $\unit\in\CCat$
and symmetry isomorphisms $c_{X,Y}: X\otimes Y\xrightarrow{\simeq} Y\otimes X$
such that $c_{X,Y} c_{Y X} = \id$.

In brief, an operad is generally defined as a collection of objects
\begin{equation}\label{eqn:background:operads:collection}
\POp = \{\POp(r),r\in\NN\},
\end{equation}
such that the symmetric group $\Sigma_r$ acts on $\POp(r)\in\CCat$ for each $r\in\NN$,
together with composition products
\begin{equation}\label{eqn:background:operads:composition-operations}
\circ_i: \POp(k)\otimes\POp(l)\rightarrow\POp(k+l-1),
\end{equation}
and a unit morphism $\eta: \unit\rightarrow\POp(1)$,
which satisfy an obvious diagrammatic counterpart of the equivariance, unit and associativity relations
of the previous paragraph~\S\ref{background:little-discs-operads}(\ref{eqn:background:little-discs-operads:composition-equivariance}-\ref{eqn:background:little-discs-operads:composition-unit}).

If we can think in terms of elements, then we can regard the elements of an operad $p\in\POp(r)$
as abstract operations $p: A^{\otimes r}\rightarrow A$, as in the case of the operad of little $n$-discs.
Indeed, we can associate any operad with a category of algebras, which precisely consists
of the objects of the base category $A\in\CCat$
equipped with an action of such operations $p: A^{\otimes r}\rightarrow A$, for $p\in\POp(r)$.
In general, we call $\POp$-algebras this category of algebras
which we associate to an operad $\POp$.
In what follows, we use the term ``arity'' for the index $r\in\NN$
of the components of an operad (and for any kind of structure related to operads, like the cooperads, the symmetric collections, {\dots},
which we consider later on in this survey).

Throughout this article, we use the notation $\Op = \CCat\Op$ for the category of operads in a symmetric monoidal category $\CCat$,
where we obviously define an operad morphism $\phi: \POp\rightarrow\QOp$
as a collection of morphisms in the base category $\phi: \POp(r)\rightarrow\QOp(r)$
which preserve the structure operations associated to our objects.
For the operad of little $n$-discs, we take $\CCat = \Top$, the category of topological spaces,
together with $\otimes = \times$, the cartesian product,
and $\unit = *$, the one-point set.
In what follows, we also consider operads in the cartesian category of simplicial sets $(\sSet,\times,*)$,
in a category of modules $(\Mod,\otimes,\kk)$, where $\kk$ is any ground ring,
in the category of dg-modules $(\dg\Mod,\otimes,\kk)$,
and in the category of graded modules $(\gr\Mod,\otimes,\kk)$.

We use that a lax symmetric monoidal functor induces a functor between operad categories.
We first consider the case of the singular homology theory $\DGH_*(-) = \DGH_*(-,\kk)$ with coefficients in the ground ring $\kk$,
which we regard as a lax symmetric monoidal functor between the category of topological spaces $\Top$
and the category of graded modules $\gr\Mod$.
We get that this functor carries the little $n$-discs operad $\DOp_n$
to an operad in graded modules $\DGH_*(\DOp_n) = \{\DGH_*(\DOp_n(r)),r\in\NN\}$.
We then have the following statement:

\begin{thm}[{F. Cohen~\cite{Cohen}}]\label{thm:background:little-discs-homology}
For $n\geq 2$, we have an isomorphism of operads in graded modules $\DGH_*(\DOp_n)\simeq\PoisOp_n$,
where $\PoisOp_n$ is the operad governing $n$-Poisson algebras.
For $n=1$, we have $\DGH_*(\DOp_1)\simeq\AsOp$
where $\AsOp$ is the operad governing associative algebras.
\end{thm}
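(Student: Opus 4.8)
The plan is to reduce the computation to the homology of configuration spaces and then to match the resulting operad structure with the $n$-Poisson operad generator by generator.

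First I would observe that the map which sends a configuration of little $n$-discs to the sequence of their centers defines a homotopy equivalence $\DOp_n(r)\xrightarrow{\sim}F(\RR^n,r)$, where $F(\RR^n,r)$ denotes the space of configurations of $r$ pairwise distinct points in $\RR^n$; this map is compatible with the symmetric group actions and with the operadic composition up to homotopy, so it suffices to understand the homology of the spaces $F(\RR^n,r)$ together with the induced structure. For $n=1$, the space $F(\RR^1,r)$ is homotopy equivalent to the discrete set $\Sigma_r$ of linear orderings of the $r$ points, so $\DGH_*(\DOp_1(r))$ is concentrated in degree $0$ and is isomorphic, as a $\Sigma_r$-module, to the regular representation $\kk[\Sigma_r]=\AsOp(r)$; a direct inspection of the composition products then yields the identification with the associative operad $\AsOp$ and settles the case $n=1$.

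For $n\geq 2$, I would first compute the homology of $F(\RR^n,r)$ additively by means of the Fadell--Neuwirth fibrations $F(\RR^n,r)\to F(\RR^n,r-1)$, whose fiber is $\RR^n$ minus $r-1$ points, a space homotopy equivalent to a wedge of $r-1$ spheres $\Sphere^{n-1}$. These fibrations admit sections (one adjoins a point outside a large ball), so the cohomology of the total space is free over that of the base and the Leray--Hirsch theorem applies. Concretely one obtains that $\DGH^*(F(\RR^n,r))$ is the graded-commutative algebra generated by classes $\omega_{ij}$ of degree $n-1$, one for each ordered pair $i\neq j$, subject to $\omega_{ij}=(-1)^n\omega_{ji}$, $\omega_{ij}^2=0$, and the Arnold three-term relation $\omega_{ij}\omega_{jk}+\omega_{jk}\omega_{ki}+\omega_{ki}\omega_{ij}=0$. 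Dualizing gives the homology together with its $\Sigma_r$-action. Next I would pin down the operad structure. In arity two, $F(\RR^n,2)\simeq\Sphere^{n-1}$, so $\DGH_*(\DOp_n(2))$ carries a generator $\mu$ in degree $0$ and a generator $\lambda$ in degree $n-1$ (the fundamental class of the sphere), and I would define a morphism of operads $\PoisOp_n\to\DGH_*(\DOp_n)$ sending the generating product and bracket of $\PoisOp_n$ to $\mu$ and $\lambda$. The relations of $\PoisOp_n$ are then checked topologically: commutativity and associativity of $\mu$ reflect the homotopy commutativity of disc insertion in degree $0$; the shifted antisymmetry $\lambda\cdot\tau=(-1)^n\lambda$ follows from the fact that the antipodal map of $\Sphere^{n-1}$ has degree $(-1)^n$; the Leibniz relation expresses that $\lambda$ acts as a derivation; and the shifted Jacobi identity in arity three is precisely the image of the Arnold relation.

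Finally I would prove that this morphism is an isomorphism by a dimension count in each arity. On the source side, a Poincar\'e--Birkhoff--Witt argument decomposes $\PoisOp_n$ as a composite $\ComCat\circ\LLie_n$ of symmetric collections, where $\LLie_n$ is the degree-shifted Lie operad, which determines the graded dimension (the Poincar\'e series) of $\PoisOp_n(r)$. On the target side the same series is read off from the homology of $F(\RR^n,r)$ computed above, equivalently from the factorization $\prod_{k=1}^{r-1}(1+k\,t^{n-1})$ of its Poincar\'e polynomial coming from the Fadell--Neuwirth fibrations. Since the morphism is surjective in each arity---the homology operad $\DGH_*(\DOp_n)$ is generated under operadic composition by the arity-two classes $\mu$ and $\lambda$---and the graded dimensions agree, it is an isomorphism. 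I expect the main obstacle to lie in the identification of the multiplicative structure of $\DGH^*(F(\RR^n,r))$, that is, in establishing the Arnold relations with their correct signs and in verifying that the comparison morphism respects operadic composition on the nose rather than merely up to homotopy; the bookkeeping of the degree-$(n-1)$ signs throughout, and the matching of the $\Sigma_r$-actions in the final dimension count, is where the care is required.
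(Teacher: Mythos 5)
Your proposal is correct and follows essentially the same route as the paper's explanations: one identifies $\DOp_n(r)$ with the configuration space $\FOp(\RR^n,r)$, produces the morphism $\phi\colon\PoisOp_n\rightarrow\DGH_*(\DOp_n)$ from the arity-two classes $\mu_0,\lambda_0$ coming from $\DOp_n(2)\sim\Sphere^{n-1}$, and then checks that it is an isomorphism; you simply fill in the isomorphism step (Fadell--Neuwirth fibrations, the Arnold presentation, and the PBW dimension count) that the paper defers to Cohen's reference. The only point to watch is that your surjectivity claim---that $\DGH_*(\DOp_n)$ is generated under operadic composition by the arity-two classes---is itself the substantive input one must extract from the configuration-space computation (for instance via the explicit duality pairing between Poisson monomials and the Arnold classes, as in the paper's later Proposition~\ref{thm:rational-homotopy:little-discs-operad-cohomology}), not a formal consequence of the setup.
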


\begin{proof}[Explanations]
The operad of $n$-Poisson algebras $\PoisOp_n$ is defined by a presentation by generators and relations.
This presentation reflects the definition of an $n$-Poisson algebra as a graded vector space $A$
equipped with a commutative product $\mu: A\otimes A\rightarrow A$
together with an operation $\lambda: A\otimes A\rightarrow A$,
of degree $n-1$, which is odd when $n$ is odd, even when $n$ is even,
and which satisfies an obvious graded generalization of the classical relations of Poisson brackets.

To formally define this presentation of the $n$-Poisson operad, we consider abstract generating operations $\mu,\lambda\in\PoisOp_n(2)$
such that $\deg(\mu) = 0$, $\deg(\lambda) = n-1$,
and which we equip with the action of the symmetric group such that $(1\ 2)\cdot\mu = \mu$
and $(1\ 2)\cdot\lambda = (-1)^n\lambda$.
Then we take the modules spanned by all formal operadic composites of these operations moded out by the operadic ideal generated by the relations
such that $\mu\circ_1\mu\equiv\mu\circ_2\mu$, $\lambda\circ_1\lambda\equiv\lambda\circ_2\lambda + (2\ 3)\cdot\lambda\circ_1\lambda$,
and $\lambda\circ_1\mu\equiv\mu\circ_2\lambda + (2\ 3)\cdot\mu\circ_1\lambda$,
which are obvious operadic counterparts of the associativity relation, of the Jacobi relation,
and of the Poisson distribution formula.
In what follows, we also use variables in our notation of the elements an operad,
so that we may write $\mu = \mu(x_1,x_2)$ and $\lambda = \lambda(x_1,x_2)$
for the generating operations of the $n$-Poisson operad.
In addition, we may also use the standard algebraic notation $\mu(x_1,x_2) = x_1 x_2$
and $\lambda(x_1,x_2) = [x_1,x_2]$
for these operations. Then we get that elements of the graded module $\PoisOp_n(r)$ can be interpreted as Poisson polynomials
on $r$ variables $\pi = \pi(x_1,\dots,x_r)$ that are of degree one with respect to each variable $x_i$, $i = 1,\dots,r$
(like $\pi(x_1,\dots,x_6) = [[x_4,x_1],x_2]\cdot x_6\cdot  [x_3,x_5]$).

The above presentation by generators and relations actually returns a version of the $n$-Poisson operad with no term in arity zero,
but for our purpose, we consider a version of the $n$-Poisson operad such that $\PoisOp_n(0) = \kk e$,
with an extra element of arity zero $e\in\PoisOp_n(0)$ that models the unit element
of a Poisson algebra.
To determine the composition product of an operation with this extra element, we just use the formulas $\mu\circ_1 e = \mu\circ_2 e = 1$
and $\lambda\circ_1 e = \lambda\circ_2 e = 0$ for the generating operations
of our operad.

The operad of associative algebras $\AsOp$ is defined similarly, as the operad generated by an operation of degree zero $\mu\in\AsOp(2)$,
on which the symmetric group $\Sigma_2$ acts freely, moded out by the operadic ideal of relations
generated by the associativity identity $\mu\circ_1\mu\equiv\mu\circ_2\mu$. We still assume $\AsOp(0) = \kk e$
for an element of arity zero $e\in\AsOp(0)$ such that $\mu\circ_1 e = \mu\circ_2 e = 1$.

If we go back to the operad of little $n$-discs $\DOp_n$,
then we easily see that we have a homotopy equivalence $\DOp_n(2)\sim\Sphere^{n-1}$,
where $\Sphere^{n-1}$ denotes the $n-1$-sphere (we go back to this subject in~\S\ref{background:fulton-macpherson-operad}).
Thus, in the case $n\geq 2$, we have $\DGH_0(\DOp_n(2)) = \kk\mu_0$, $\DGH_{n-1}(\DOp_n(2)) = \kk\lambda_0$, and $\DGH_*(\DOp_n(2)) = 0$ for $*\not=n-1$,
for some generating homology classes $\mu_0,\lambda_0\in\DGH_*(\DOp_n(2))$.
The claim is that these generating elements $\mu_0,\lambda_0\in\DGH_*(\DOp_n(2))$ satisfy the structure relations
of the generating operations of the $n$-Poisson operad
in $\DGH_*(\DOp_n)$,
so that we have a well-defined operad morphism $\phi: \PoisOp_n\rightarrow\DGH_*(\DOp_n)$
which is determined by the assignment $\phi(\mu) := \mu_0$ and $\phi(\lambda) := \lambda_0$
on generators.
Then one proves that this operad morphism is actually an isomorphism.

The definition of the isomorphism $\phi: \AsOp\rightarrow\DGH_*(\DOp_1)$
in the case $n=1$ is similar. In this case, we just have $\DOp_1(2)\sim\Sphere^0\Rightarrow\DGH_0(\DOp_1(2)) = \kk\mu_0\oplus\kk(1\ 2)\mu_0$,
where $(1\ 2)\mu_0$ denotes the image of the homology class $\mu_0$
under the action of the transposition in $\DGH_0(\DOp_1(2))$. Then we determine our isomorphism by the assignment $\phi(\mu) := \mu_0$
for the generating operation of the associative operad $\mu\in\AsOp(2)$.

Note that the operad $\AsOp$ is formed within the base category of (ungraded) $\kk$-modules.
Hence, when we claim that we have an isomorphism $\AsOp\simeq\DGH_*(\DOp_1)$ we implicitly assert that the homology of the operad $\DOp_1$
is concentrated in degree zero. In fact, we can easily see that we have a weak-equivalence relation in the category of operads $\DOp_1\sim\PiOp$,
where $\PiOp$ a set-theoretic counterpart of the associative operad
defined by $\PiOp(r) = \Sigma_r$ for each $r\in\NN$.
In what follows, we also use the name ``permutation operad'' to refer to this operad $\PiOp$.

We go back to the notion of a weak-equivalence in the category of topological operads in~\S\ref{background:en-operads},
where we explain the definition of an $E_n$-operad.
But, in fact, we just use the obvious definition, where a weak-equivalence of topological operads is a morphism $\phi: \POp\xrightarrow{\sim}\QOp$
which forms a weak-equivalence of topological spaces arity-wise $\phi: \POp(r)\xrightarrow{\sim}\QOp(r)$, for $r\in\NN$.
Then we say that a pair of topological operads $\POp,\QOp\in\Top\Op$ are weakly-equivalent (as operads) and we write $\POp\sim\QOp$
when we have a zigzag of such weak-equivalences that link $\POp$ and $\QOp$
in the category of operads.
In the case $\POp = \DOp_1$ and $\QOp = \PiOp$, we actually have a direct morphism $\phi: \DOp_1\xrightarrow{\sim}\PiOp$ (easy exercise: define this morphism).
In this relation $\DOp_1\sim\PiOp$, we just regard the operad in sets $\PiOp$ as a discrete operad in topological spaces.
\end{proof}

\subsubsection{The notion of a bimodule over an operad}\label{background:operad-bimodules}
Let $\POp$ be any operad in a symmetric monoidal category $\CCat$. We have a natural notion of bimodule associated to $\POp$
which consists of collections
\begin{equation}\label{eqn:background:operad-bimodules:collection}
\MOp = \{\MOp(r),r\in\NN\},
\end{equation}
such that the object $\MOp(r)$ is equipped with an action of the symmetric group on $r$-letters $\Sigma_r$, for each $r\in\NN$,
as in the case of operads,
but where we now have composition products of the form
\begin{equation}\label{eqn:background:operad-bimodules:composition-operations}
\circ_i: \POp(k)\otimes\MOp(l)\rightarrow\MOp(k+l-1)\quad\text{and}\quad\circ_i: \MOp(k)\otimes\POp(l)\rightarrow\MOp(k+l-1),
\end{equation}
for $k,l\in\NN$ and $i\in\{1,\dots,k\}$. We assume that these composition operations satisfy an obvious generalization
of the equivariance, unit and associativity relations of operads.
We use the notation $\POp\BiMod$ for this category of $\POp$-bimodules, where a morphism of $\POp$-bimodules $\phi: \MOp\rightarrow\NOp$
obviously consists of a collection of morphisms in the base category $\phi: \MOp(r)\rightarrow\NOp(r)$
which preserve the structure operations associated to our objects
again.

In the literature, the names ``infinitesimal bimodule'' (see~\cite{AroneTurchin}), ``linear bimodule'' (see~\cite{DwyerHess})
and ``abelian bimodule'' (see~\cite[\S III.2.1]{FresseBook})
are also used for this category of bimodules,
because we have another natural notion of bimodule associated to an operad, which is defined by using that the category of operads
is identified with a category of monoids in a certain monoidal category. But we do not use the latter category of operadic bimodules.
Therefore, we prefer to simplify the terminology for our category of bimodules.

In what follows, we mainly use that any operad $\POp$ forms a bimodule over itself and if we have an operad morphism $\phi: \POp\rightarrow\QOp$,
then the operad $\QOp$ naturally inherits a $\POp$-bimodule structure by restriction
of its natural $\QOp$-bimodule structure.

We consider the category of bimodules in topological spaces which is associated to a little discs operad $\DOp_m$, for some $m\geq 1$.
We have an operad morphism $\iota: \DOp_m\rightarrow\DOp_n$ which is defined by mapping any collection of little $m$-discs in $\DD^m$
to the collection of little $n$-discs with the same radius and centers in $\DD^n$,
as in the following example where we take $m=1$ and $n=2$:
\begin{equation}\label{eqn:background:operad-bimodules:little-discs-embeddings-picture}
\vcenter{\hbox{\includegraphics[scale=.66]{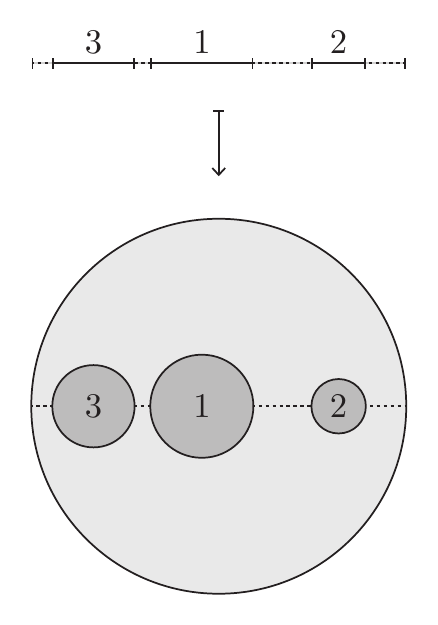}}}.
\end{equation}
We use this morphism to provide the operad $\DOp_n$ with the structure of a $\DOp_m$-bimodule.
We also consider the example of $\DOp_m$-bimodule formed by the operad $\DOp_m$ itself.

The category $\DOp_m\BiMod$ inherits a natural simplicial model structure with as class of weak-equivalences
the morphisms of $\DOp_m$-bimodules $\phi: \MOp\xrightarrow{\sim}\NOp$
which define a weak-equivalence of topological spaces $\phi: \MOp(r)\xrightarrow{\sim}\NOp(r)$
in each arity $r\in\NN$.
The mapping spaces $\Map_{\DOp_m\BiMod}(-,-)$ of this simplicial model structure are inherited from the category of topological spaces.
For our purpose, we consider derived mapping spaces $\Map_{\DOp_m\BiMod}^h(-,-)$
which are defined by $\Map_{\DOp_m\BiMod}^h(\MOp,\NOp) := \Map_{\DOp_m\BiMod}(\ROp,\SOp)$,
for any pair of objects $\MOp,\NOp\in\DOp_m\BiMod$,
where $\ROp$ is a cofibrant replacement of $\MOp$ in the category of $\DOp_m$-bimodules,
while $\SOp$ is a fibrant replacement of $\NOp$ (actually, we can take $\SOp = \NOp$ because every $\DOp_m$-bimodule
is fibrant in the setting of topological spaces).

We have the following result:

\begin{thm}[{D. Sinha~\cite{SinhaKnot}, G. Arone and V. Turchin~\cite{AroneTurchin}}]\label{thm:background:embedding-bimodule-model}
For $n-m\geq 3$, we have a weak-equivalence
\begin{equation*}
\overline{\Emb}_c(\RR^m,\RR^n)\sim\Map_{\DOp_m\BiMod}^h(\DOp_m,\DOp_n),
\end{equation*}
between the space of smooth embeddings with compact support modulo immersions $\overline{\Emb}_c(\RR^m,\RR^n)$
and the derived mapping space associated to the objects $\MOp = \DOp_m$ and $\NOp = \DOp_n$
in the category of $\DOp_m$-bimodules $\DOp_m\BiMod$.
\end{thm}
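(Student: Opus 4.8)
The plan is to obtain the statement from the Goodwillie--Weiss embedding calculus in two stages: first I would identify each stage of the Taylor tower of the functor $\overline{\Emb}_c(\RR^m,-)$ modulo immersions with a derived mapping space of truncated infinitesimal $\DOp_m$-bimodules, and then I would invoke the convergence of the tower in codimension $n-m\geq 3$ to pass to the limit.

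For the first stage, I would apply Weiss's manifold calculus to the contravariant functor $U\mapsto\overline{\Emb}(U,\RR^n)$ on the poset of open subsets of $\RR^m$ (in the compactly supported, modulo-immersions variant). Its $k$th polynomial approximation $T_k$ is a homotopy limit of this functor over the subcategory of those $U$ that are diffeomorphic to a disjoint union of at most $k$ copies of $\RR^m$. The essential geometric input is the evaluation of the functor on such test objects: an embedding of $j$ discs is determined up to homotopy by a configuration of $j$ points together with tangential (framing) data, and dividing out immersions removes precisely the Smale--Hirsch framing data, so that
\begin{equation*}
\overline{\Emb}\bigl(\coprod_j\DD^m,\RR^n\bigr)\sim\operatorname{Conf}_j(\RR^n)\sim\DOp_n(j),
\end{equation*}
where the last equivalence is the standard deformation retraction of the little $n$-discs space onto the configuration space obtained by shrinking each disc to its center.

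Next I would recognize the homotopy-limit diagram as the one computing a derived bimodule mapping space. The insertion and restriction maps among configurations of discs in $\RR^m$ are governed by the little $m$-discs operad $\DOp_m=\{\DOp_m(j)\}$ (whose arity-$j$ space is itself $\operatorname{Conf}_j(\RR^m)$ up to homotopy), and under the equivalence above the values $\overline{\Emb}(\coprod_j\DD^m,\RR^n)\sim\DOp_n(j)$ assemble into the infinitesimal $\DOp_m$-bimodule structure carried by $\DOp_n$ through the morphism $\iota\colon\DOp_m\to\DOp_n$ of~\S\ref{background:operad-bimodules}. The diagram that computes $T_k$ is then exactly the diagram that computes maps out of the tautological $\DOp_m$-bimodule $\DOp_m$ truncated at arity $k$: resolving the source bimodule $\DOp_m$ by a truncated free/bar resolution---conveniently modelled, as in Sinha's work, by the Fulton--MacPherson operad $\FMOp_n\sim\DOp_n$ on the target side---turns the strict limit into the homotopy limit and yields
\begin{equation*}
T_k\overline{\Emb}_c(\RR^m,\RR^n)\sim\Map_{\DOp_m\BiMod^{\leq k}}^h(\DOp_m,\DOp_n).
\end{equation*}
Taking homotopy limits over $k$ and using $\Map_{\DOp_m\BiMod}^h(\DOp_m,\DOp_n)\sim\operatorname{holim}_k\Map_{\DOp_m\BiMod^{\leq k}}^h(\DOp_m,\DOp_n)$ then identifies the limit of the tower with the full bimodule mapping space.

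Finally, the convergence of the Goodwillie--Weiss tower in codimension $n-m\geq 3$ (the Goodwillie--Klein disjunction and analyticity estimates cited above) provides a weak equivalence $\overline{\Emb}_c(\RR^m,\RR^n)\sim T_\infty\overline{\Emb}_c(\RR^m,\RR^n)$, and combining this with the previous step completes the argument. I expect the main obstacle to be the second stage: setting up the infinitesimal bimodule structures precisely and proving that the homotopy limit defining $T_k$ agrees, as a derived object, with the truncated bimodule mapping space. This is the technical core of the theorems of Sinha (for $m=1$) and Arone--Turchin (for general $m$), and it requires a careful model-categorical comparison of cofibrant resolutions rather than a bare isomorphism of diagrams; by contrast, the convergence input, while geometrically deep, can be quoted directly.
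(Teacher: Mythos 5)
Your proposal is correct and follows essentially the same route as the paper's account (which itself only sketches the Arone--Turchin argument): a level-wise identification of the Goodwillie--Weiss stages $T_k\overline{\Emb}_c(\RR^m,\RR^n)$ with truncated $\DOp_m$-bimodule mapping spaces $\Map_{\DOp_m\BiMod^{\leq k}}^h(\DOp_m,\DOp_n)$, obtained by evaluating the functor on disjoint unions of discs and recognizing the resulting diagram as a category of operators for $\DOp_m$, followed by passage to the limit and the Goodwillie--Klein convergence statement in codimension $n-m\geq 3$. You also correctly locate the technical core in the comparison between the homotopy limit defining $T_k$ and the derived truncated bimodule mapping space, which is exactly the content the paper defers to Sinha and Arone--Turchin.
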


\begin{proof}[Explanations and references]
The paper~\cite{SinhaKnot} only deals with the case $m=1$ of this theorem, and proves a result of a different form,
which involves the totalization of a cosimplicial space rather than mapping spaces of operadic bimodules.
But this totalization can be interpreted as a cosimplicial model
of mapping spaces of operadic bimodules $\Map_{\PiOp\BiMod}^h(\PiOp,-)$,
where we consider the permutation operad $\PiOp$ (the set-theoretic version of the associative operad)
instead of the operad of little $1$-discs $\DOp_1$.
Hence, since we have $\DOp_1\sim\PiOp$, we get that the result obtained in~\cite{SinhaKnot} is equivalent to the statement of the theorem
in the case $m=1$.
The general case $m\geq 1$ of the theorem is addressed in the paper~\cite{AroneTurchin},
from which we also borrow this operadic bimodule formulation
of the result.

Both papers~\cite{AroneTurchin,SinhaKnot} rely on models of the Goodwillie--Weiss approximations of the space $\overline{\Emb}_c(\RR^m,\RR^n)$
and actually give a proof of the existence of a level-wise weak-equivalence:
\begin{equation}
T_k\overline{\Emb}_c(\RR^m,\RR^n)\sim\Map_{\DOp_m\BiMod^{\leq k}}^h(\DOp_m,\DOp_n),
\end{equation}
where $T_k\overline{\Emb}_c(\RR^m,\RR^n)$ denotes the $k$th level
of the Goodwillie--Weiss tower, whereas $\Map_{\DOp_m\BiMod^{\leq k}}^h(-,-)$
denotes a mapping space associated to a category of truncated
bimodules $\DOp_m\BiMod^{\leq k}$. (This category just consists of bimodules that are defined up to arity $k$.)
The statement cited in the theorem is obtained by taking a limit $k\rightarrow\infty$
and by using that the Goodwillie--Weiss tower $T_k\overline{\Emb}_c(\RR^m,\RR^n)$
converges to $\overline{\Emb}_c(\RR^m,\RR^n)$
when $n-m\geq 3$.
The Goodwillie--Weiss tower is defined in~\cite{GoodwillieWeissEmbeddings,WeissEmbeddings}.
The convergence statement which we cite in this account is stated in these papers and rely on results
of Goodwillie and Goodwillie--Klein~\cite{GoodwillieKlein}.
We refer to~\cite{WeissSurvey} for a survey on the embedding calculus and for further references
on this subject. We just give a few additional ideas to explain the difference of the approach between the papers~\cite{AroneTurchin,SinhaKnot}.

In short, the space $T_k\overline{\Emb}_c(\RR^m,\RR^n)$ represents the value at $U = \RR^m$
of a polynomial approximation of degree $\leq k$ (in the homotopy sense)
of the functor $U\mapsto\overline{\Emb}_c(U,\RR^n)$
on the category of open subsets of the Euclidean space $\RR^m$
whose complement is bounded.
This polynomial approximation property asserts that we have a weak-equivalence $\overline{\Emb}_c(U,\RR^n)\rightarrow T_k\overline{\Emb}_c(U,\RR^n)$
when $U$ has the form $U = \coprod_{i=1}^l\mathring{\DD}{}^m$, with $l\leq k$,
and that the functor $U\mapsto T_k\overline{\Emb}_c(U,\RR^n)$
is universal with this property. In~\cite{AroneTurchin}, the equivalence between this space and the mapping space of truncated operadic bimodules
is obtained by observing that this category of open sets of the form $U = \coprod_{i=1}^l\mathring{\DD}{}^m$, $l\leq k$,
is homotopy equivalent to a category of operators
associated to the operad $\DOp_m$.

In~\cite{SinhaKnot}, the starting point is a model of the space $T_k\overline{\Emb}_c(\RR^1,\RR^n)$
which involves the totalization of a cosimplicial space,
as we explain earlier in our explanations of this theorem.
This simplicial set is defined by using a ``compactification'' of the space of configurations of points in $\RR^n$.
This compactification is identified with an operad $\KOp_n$, the Kontsevich operad,
such that we have a weak-equivalence relation $\KOp_n\sim\DOp_n$
in the category of operads in topological spaces $\Top\Op$. (We go back to the definition of this operad
in~\S\ref{background:fulton-macpherson-operad}.)
\end{proof}

We mentioned in the introduction that the mapping spaces of operadic bimodules considered in the previous theorem are weakly-equivalent
to iterated loop spaces on mapping spaces of operads. To be precise, we have the following general statement:

\begin{thm}[{W. Dwyer and K. Hess~\cite{DwyerHess}, V. Turchin~\cite{TurchinDelooping}, J. Ducoulombier and V. Turchin~\cite{DucoulombierTurchin}}]\label{thm:background:bimodule-delooping}
For any $m\geq 1$, and for any operad in topological spaces $\QOp$ such that $\QOp(0) = \QOp(1) = *$, we have a weak-equivalence:
\begin{equation*}
\Map_{\DOp_m\BiMod}^h(\DOp_m,\QOp)\sim\DGOmega^{m+1}\Map_{\Op}^h(\DOp_m,\QOp),
\end{equation*}
where we consider a derived mapping space in the category of $\DOp_m$-bimodules on the left hand side and the $m+1$-fold loop space
of the derived mapping space associated the objects $\DOp_m,\QOp\in\Op$
in the category of operads $\Op$ on the right hand side.
\end{thm}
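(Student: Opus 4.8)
The plan is to realize the $(m+1)$-fold delooping as a composite of elementary deloopings organized along a tower of module categories that interpolate between operads and bimodules, using the reducedness hypothesis $\QOp(0)=\QOp(1)=*$ at each stage to make the comparison homotopically clean.

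First I would fix cofibrant models so that the derived mapping spaces on both sides are computed by honest simplicial mapping spaces. Concretely, I would replace $\DOp_m$ by the Fulton--MacPherson (or Kontsevich) operad $\KOp_m\sim\DOp_m$ of~\S\ref{background:fulton-macpherson-operad}, which is cofibrant as an operad and remains well-behaved as a bimodule over itself. This step also records the structural input that drives the loop count: the spaces $\DOp_m(r)$ are built, up to homotopy, from $m$-dimensional configuration-space data (with $\DOp_m(2)\sim\Sphere^{m-1}$), and it is this $E_m$-type structure that will be responsible for $m$ of the $m+1$ loops.

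The core of the argument is a chain of homotopy fiber sequences, each contributing a single delooping and each having a contractible total space. I would interpolate between $\DOp_m$-bimodules and operads under $\DOp_m$ by passing through the intermediate categories of right $\DOp_m$-modules and left $\DOp_m$-modules: forgetting structure gives forgetful functors whose derived mapping spaces fit into fiber sequences of the form
\begin{equation*}
\Map^h_{\DOp_m\BiMod}(\DOp_m,\QOp)\longrightarrow E\longrightarrow B,
\end{equation*}
with $E$ contractible, so that the source is identified with $\DGOmega B$. Iterating, the passage from bimodule maps to operad maps contributes one loop, while the $m$-dimensional configuration-space structure of $\DOp_m$ supplies the remaining $m$ loops; assembling the individual deloopings coherently yields the $m+1$-fold loop space $\DGOmega^{m+1}\Map^h_{\Op}(\DOp_m,\QOp)$. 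In the case $m=1$ this is the double delooping of Dwyer--Hess and Turchin, where the bimodule mapping space is modeled by the totalization of a cosimplicial space and the two loops are produced by a Bousfield--Kan / Segal-space analysis; the general case $m\geq 1$ follows the more elaborate tower of Ducoulombier--Turchin.

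The hard part will be establishing the contractibility of the total spaces $E$ in these fiber sequences and correctly identifying the homotopy fibers. This is exactly where the hypothesis $\QOp(0)=\QOp(1)=*$ is essential: it ensures that the nullary and unary data distinguishing operads (and the various module structures) from one another are homotopically invisible, so that the extra cells carried by $\DOp_m$ relative to the forgetful targets contribute only trivial obstructions and the relevant connectivity and cofiber computations vanish. Keeping the cofibrancy and fibrancy conditions under control through the whole tower, so that the homotopy (co)limits genuinely compute the derived mapping spaces and the separate deloopings glue into a single $(m+1)$-fold loop structure, is the remaining technical burden.
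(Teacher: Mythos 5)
The first thing to observe is that the paper does not actually prove this theorem: its ``proof'' consists of pointers to \cite{DwyerHess,TurchinDelooping} for $m=1$ and to \cite{DucoulombierTurchin} for general $m$, plus the remark that the $m=1$ case is equivalent to the double delooping of Sinha's cosimplicial totalization. Your proposal cites the same sources and correctly identifies the general architecture (deloopings exhibited by fiber sequences with contractible total spaces, with the reducedness hypothesis $\QOp(0)=\QOp(1)=*$ responsible for the contractibility), so at that level it matches what the paper records.

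The genuine gap is in the tower you propose. You interpolate between $\DOp_m$-bimodules (in this paper's sense, i.e.\ infinitesimal bimodules) and operads ``through the intermediate categories of right $\DOp_m$-modules and left $\DOp_m$-modules,'' with each forgetful step contributing one loop. That is not how the cited proofs factor the $m+1$ loops, and it is not clear it can be made to work: the actual factorization runs through the \emph{other} notion of operadic bimodule --- the one coming from regarding operads as monoids for the composition product, which this paper explicitly sets aside in \S\ref{background:operad-bimodules}. Exactly one loop comes from the passage from those monoidal bimodules to operads (this is the Dwyer--Hess step, and it is here that left and right modules over the monoid $\DOp_m$ appear as contractible total spaces, using $\QOp(1)=*$); the remaining $m$ loops come in a \emph{single} step, from the comparison between infinitesimal bimodules and monoidal bimodules, proved in \cite{DucoulombierTurchin} by a direct construction on explicit Boardman--Vogt/Fulton--MacPherson-type cofibrant resolutions of $\DOp_m$ --- not by iterating $m$ elementary fiber sequences through a chain of module categories, and it is unclear what such a chain would even be. Relatedly, your account of the hypothesis $\QOp(0)=\QOp(1)=*$ (``nullary and unary data are homotopically invisible, so obstructions vanish'') is not the mechanism; the hypothesis is used to identify the specific total spaces of the fiber sequences with $\QOp(1)$ or $\QOp(0)$ and hence with a point. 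Since those identifications carry essentially all the content of the theorem, deferring them as ``the hard part'' leaves the proof without its substance.
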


\begin{proof}[Explanations and references]
The papers~\cite{DwyerHess,TurchinDelooping} actually prove that the totalization of Sinha's cosimplicial space
is weakly-equivalent to a double delooping
of the operadic mapping space~$\Map_{\Op}^h(\DOp_1,\QOp)$. Thus, these papers establish
a statement which is equivalent to the claim of this theorem in the case $m=1$
since we observed that Sinha's cosimplicial space has an interpretation as a cosimplicial model
of a mapping space of operadic bimodules (see our explanations on the result
of the previous theorem).

The paper~\cite{DucoulombierTurchin} establishes the general case of the present theorem.
This paper also establishes a delooping result $\Map_{\DOp_m\BiMod^{\leq k}}^h(\DOp_m,\QOp)\sim\DGOmega^{m+1}\Map_{\Op^{\leq k}}^h(\DOp_m,\QOp)$
for the tower of mapping spaces of truncated bimodules $\Map_{\DOp_m\BiMod^{\leq k}}^h(\DOp_m,\QOp)$
which correspond to the levels of the Goodwillie--Weiss tower $T_k\overline{\Emb}_c(\RR^m,\RR^n)$
in the result of Theorem~\ref{thm:background:embedding-bimodule-model}
(an analogous result is also established in~\cite{TurchinDelooping} in the case $m=1$).
The category of operads $\Op^{\leq k}$ which we consider in this context
consists of the operads that are defined up to arity $k$.
\end{proof}

The results of Theorem~\ref{thm:background:embedding-bimodule-model}
and Theorem~\ref{thm:background:bimodule-delooping}
imply the following statement,
which was also established by Boavida--Weiss in~\cite{BoavidaWeiss}
without going through these intermediate results:

\begin{thm}\label{thm:background:embedding-operad-model}
For $n-m\geq 3$, we have a weak-equivalence
\begin{equation*}
\overline{\Emb}_c(\RR^m,\RR^n)\sim\DGOmega^{m+1}\Map_{\Op}^h(\DOp_m,\DOp_n)
\end{equation*}
between the space of smooth embeddings with compact support modulo immersions
and the $m+1$st fold loop space of the derived mapping space
of the operads $\DOp_m,\DOp_n\in\Top\Op$.
\end{thm}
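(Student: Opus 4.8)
The plan is to obtain this statement as a formal consequence of the two preceding theorems, chaining them together; this is exactly the route advertised in the sentence preceding the statement. First I would invoke Theorem~\ref{thm:background:embedding-bimodule-model}, which gives, under the hypothesis $n-m\geq 3$, a weak-equivalence
\begin{equation*}
\overline{\Emb}_c(\RR^m,\RR^n)\sim\Map_{\DOp_m\BiMod}^h(\DOp_m,\DOp_n).
\end{equation*}
Then I would apply Theorem~\ref{thm:background:bimodule-delooping} with target operad $\QOp = \DOp_n$ to deloop the right-hand side into $\DGOmega^{m+1}\Map_{\Op}^h(\DOp_m,\DOp_n)$. Composing these two weak-equivalences yields the claim directly. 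Note that the dimension restriction $n-m\geq 3$ is needed only for the first step (it is what guarantees the convergence of the Goodwillie--Weiss tower), whereas the delooping of the second step holds for all $m\geq 1$.

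The one point that requires care is the verification of the hypothesis of Theorem~\ref{thm:background:bimodule-delooping}, which demands $\QOp(0) = \QOp(1) = *$. For $\QOp = \DOp_n$ we have $\DOp_n(0) = *$, since the empty configuration is a single point, but $\DOp_n(1)$ is only contractible (it is the convex space of little $n$-discs $c(v) = rv+q$ contained in $\DD^n$), not literally reduced to a point. Since the derived mapping spaces appearing on both sides are invariant under arity-wise weak-equivalences of operads, I would circumvent this by replacing $\DOp_n$ with a weakly-equivalent model whose arity-one term is a genuine point. The Fulton--MacPherson operad $\FMOp_n$ (equivalently, the Kontsevich operad $\KOp_n$ of~\S\ref{background:fulton-macpherson-operad}) provides such a model, with $\FMOp_n(1) = *$ and $\FMOp_n\sim\DOp_n$ in $\Top\Op$; restricting along the operad morphism $\DOp_m\rightarrow\DOp_n\sim\FMOp_n$ transports this equivalence to the level of $\DOp_m$-bimodules and of operadic mapping spaces, so that both $\Map_{\DOp_m\BiMod}^h(\DOp_m,-)$ and $\Map_{\Op}^h(\DOp_m,-)$ are unchanged up to weak-equivalence when we pass from $\DOp_n$ to $\FMOp_n$.

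I do not expect any genuine obstacle in this assembly: the substantial work is entirely contained in the two cited theorems, which I am assuming. The only mild technical step is the reducedness adjustment just described, and it is routine given the homotopy-invariance of derived mapping spaces. As the remark preceding the statement indicates, the resulting equivalence can also be established directly, bypassing the bimodule model, by the methods of Boavida--Weiss; the interest of the present derivation is precisely that it factors the result through the two conceptually separate inputs, namely the embedding-calculus identification of Theorem~\ref{thm:background:embedding-bimodule-model} and the purely operadic delooping of Theorem~\ref{thm:background:bimodule-delooping}.
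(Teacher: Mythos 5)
Your proposal is correct and follows exactly the route the paper itself indicates: the statement is obtained by composing the weak-equivalence of Theorem~\ref{thm:background:embedding-bimodule-model} with the delooping of Theorem~\ref{thm:background:bimodule-delooping} applied to $\QOp = \DOp_n$ (the paper's own ``proof'' is only a list of references, noting that Boavida--Weiss also give a direct argument). Your attention to the hypothesis $\QOp(1) = *$, handled by passing to a reduced model such as $\FMOp_n$, is a legitimate technical point that the paper glosses over, and your resolution of it is sound.
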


\begin{proof}[References]
In the case $m=1$, Dwyer--Hess explicitly state this result in their paper~\cite{DwyerHess},
as a consequence of their work on the delooping of mapping spaces
of operadic bimodules
and of the result of Sinha~\cite{SinhaKnot} (see also Turchin's paper~\cite{TurchinDelooping}
for an analogous observation).
Ducoulombier--Turchin also state the general case of this result as one of the main motivations for their work in~\cite{DucoulombierTurchin}.
In the paper~\cite{BoavidaWeiss}, Boavida--Weiss give a direct proof of the result of this theorem, in the general case $m\geq 1$,
by using ideas of higher category theory and by elaborating on the methods of the embedding calculus.
\end{proof}

\subsubsection{The notion of an $E_n$-operad}\label{background:en-operads}
In our account of the proofs of Theorem~\ref{thm:background:embedding-bimodule-model},
we mention that D. Sinha uses a compactification
of the configuration spaces of points in $\RR^n$ to produce an operad $\KOp_n$
such that we have the weak-equivalence relation $\KOp_n\sim\DOp_n$
in the category of operads.

In general, we use this standard notation $X\sim Y$ to assert that a pair of objects in a model category $X,Y\in\CCat$
(or more generally in a category equipped with weak-equivalences)
is linked by a zigzag of weak-equivalences $X\xleftarrow{\sim}\cdot\xrightarrow{\sim}\cdot\ldots\cdot\xrightarrow{\sim} Y$.
In the context of operads in topological spaces, we consider as a class of weak-equivalences the morphisms $\phi: \POp\xrightarrow{\sim}\QOp$
which form a weak-equivalence in the base category arity-wise $\phi: \POp(r)\xrightarrow{\sim}\QOp(r)$
as we explain in our account of Cohen's determination of the homology of the operad
of little $n$-discs (Theorem~\ref{thm:background:little-discs-homology}).
In fact, this class or morphisms represents the class of weak-equivalences
of a model structure on the category of topological operads.
Later on, we will see that we have an analogous result for the category of operads in simplicial sets, in dg-modules, {\dots} (with some restrictions
on the component of arity zero in the dg-module contexts).

Now, we just define the class of $E_n$-operads in topological spaces as the class of operads $\EOp_n\in\Top\Op$ such that $\EOp_n\sim\DOp_n$.
In subsequent applications, we consider analogous notions of $E_n$-operad in the category of simplicial sets,
in the category of dg-modules, {\dots}
Then we consider the image of the operad of little $n$-discs under an appropriate functor to define our reference model
of the class of $E_n$-operads in our base category. For instance, we define an $E_n$-operad in simplicial sets
as an operad $\EOp_n\in\sSet\Op$ such that $\POp\sim\Sing_{\bullet}(\DOp_n)$,
where $\Sing_{\bullet}: X\mapsto\Sing_{\bullet}(X)$
denotes the singular complex functor on the category of topological spaces. (We just use that this functor is strongly symmetric monoidal
to obtain that it preserves operads.)
In fact, we have $\EOp_n\sim\Sing_{\bullet}(\DOp_n)\Leftrightarrow|\EOp_n|\sim\DOp_n$,
where $|-|$ denotes the classical geometric realization functor on the category of simplicial sets.
Thus, an $E_n$-operad in simplicial sets is exactly an operad $\EOp_n\in\sSet\Op$ whose geometric realization $|\EOp_n|$
defines an $E_n$-operad in topological spaces.

In the literature, some authors require that the components of an $E_n$-operad $\EOp_n(r)$
are equipped with a free action of the symmetric groups $\Sigma_r$.
This assumption ensures that the categories of algebras associated to our $E_n$-operads are homotopy equivalent.
but we do need this result in what follows, and therefore we do not make any requirement
other than the existence of a weak-equivalence relation $\EOp_n\sim\DOp_n$
in the definition of an $E_n$-operad.

In the case $n=1$, we obtain that the permutation operad $\PiOp$ (the set-theoretic counterpart of the associative operad)
forms an $E_1$-operad since we have $\PiOp\sim\DOp_1$ (see our comments
on Cohen's determination of the homology of the operads
of little discs in Theorem~\ref{thm:background:little-discs-homology}).
Thus, we can also determine the class of $E_1$-operads as the class of operads $\EOp_1$
such that $\EOp_1\sim\PiOp$.
In a sense, the goal of sections~\S\S\ref{sec:e2-operads}-\ref{sec:rational-homotopy}
is to explain the definition of simple models of $E_n$-operads for $n\geq 2$,
and to give another characterization of the class of $E_n$-operads.

\subsubsection{The Kontsevich and the Fulton--MacPherson operads}\label{background:fulton-macpherson-operad}
The Kontsevich operad $\KOp_n$, which is used in Sinha's study of the embedding spaces $\overline{\Emb}_c(\RR,\RR^n)$, is an instance of an $E_n$-operad.
This operad is a variant of another $E_n$-operad, the Fulton--MacPherson operad $\FMOp_n$,
which is also defined by a compactification of the configuration spaces
of points in $\RR^n$.
The Fulton--MacPherson operad is weakly-equivalent to the operad of little $n$-discs,
and hence defines a model of an $E_n$-operad too.

In short, recall that the configuration spaces of points in $\RR^n$
are the spaces $\FOp(\RR^n,r)$, $r\in\NN$,
such that $\FOp(\RR^n,r) = \{(a_1,\dots,a_r)\in(\RR^n)^{\times r}|\text{$a_i\not=a_j$ for all $i\not=j$}\}$.
The spaces underlying the Fulton--MacPherson operad $\FMOp_n$ consist of an extension of these configuration spaces,
where points can collide $q_i\rightarrow q_j$ to produce infinitesimal configurations
with different scale levels, as represented in the following schematic picture:
\begin{equation}\label{eqn:background:fulton-macpherson-operad:picture}
\vcenter{\hbox{\includegraphics[scale=.8]{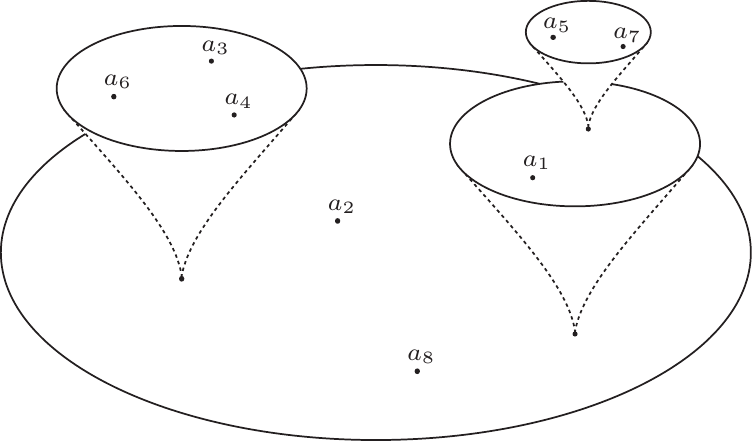}}}.
\end{equation}

In an infinitesimal configuration, we retain the collision direction between two colliding points $u_{ij} = (a_j-a_i)/||a_j-a_i||$,
as well as the speed ratio of the collision $r_{ijk} = ||a_j-a_i||/||a_k-a_i||$
when three points collide together. The latter information is just forgotten
in the definition of the Kontsevich operad $\KOp_n$.
Because of this modification, the configuration space $\FOp(\RR^n,r)$ embeds into $\FMOp_n(r)$
but not into $\KOp_n(r)$.
Thus, the space $\KOp_n(r)$ is not a compactification of the configuration space $\FOp(\RR^n,r)$
in the proper sense.
Nevertheless, we have a weak-equivalence of spaces $\KOp_n(r)\sim\FOp(\RR^n,r)$ in each arity $r\in\NN$,
and a weak-equivalence of operads $\pi: \FMOp_n\xrightarrow{\sim}\KOp_n$
when we consider the whole operad structure
attached to our objects.

Let us observe that we have an obvious weak-equivalence $\omega: \DOp_n(r)\xrightarrow{\sim}\FOp(\mathring{\DD}{}^n,r)$
from the underlying spaces of the operad of little $n$-discs $\DOp_n(r)$
to the configuration spaces associated to the open discs $\mathring{\DD}{}^n$
and hence, to the configuration spaces associated to the Euclidean space $\RR^n$,
since $\mathring{\DD}{}^n\simeq\RR^n\Rightarrow\FOp(\mathring{\DD}{}^n,r)\simeq\FOp(\RR^n,r)$. (This homotopy equivalence
carries a configuration of little $n$-discs to the configuration of the disc centers.)
In~\cite{Salvatore}, a zigzag of weak-equivalences of operads $\DOp_n\xleftarrow{\sim}\FMOp_n'\xrightarrow{\sim}\FMOp$
is defined by considering an analogue of the Fulton--MacPherson operad $\FMOp_n'$
where we take a compactification of the configuration spaces of little discs
instead of points. This argument gives a proof that the Fulton--MacPherson operad $\FMOp_n$
is an instance of an $E_n$-operad.

The Fulton--MacPherson operad $\FMOp_n$ is cofibrant with respect to a certain model structure (the Reedy model structure of~\cite[\S II.8.4]{FresseBook})
on the subcategory of operads $\Op_*$ such that $\POp(0) = *$ (see~\cite{Salvatore} and use the result of~\cite[Theorem II.8.4.11]{FresseBook}).
This is not the case of the Kontsevich operad $\KOp_n$ but this operad contains the permutation operad,
unlike the Fulton--MacPherson operad, and this is exactly the observation
used by Sinha in the definition of his cosimplicial space model of the embedding spaces $\overline{\Emb}_c(\RR,\RR^n)$.
The Fulton--MacPherson operad is used by Kontsevich in his proof of the formality of $E_n$-operads. (We go back to this subject in~\S\ref{sec:rational-homotopy}.)

\section{Braids and the homotopy theory of $E_2$-operads}\label{sec:e2-operads}
The main goal of this paper is to explain a combinatorial description of the rational homotopy of the derived mapping spaces $\Map_{\Op}^h(\DOp_m,\DOp_n)$
and of the homotopy automorphism spaces $\Aut_{\Op}^h(\DOp_n)$
associated to the operads of little discs $\DOp_n$, $n = 2,3,\dots$.
In a first step, we examine the particular case $n=2$ of the homotopy theory of $E_n$-operads.

In the previous paragraph, we observed that we have a homotopy equivalence $\DOp_n(r)\xrightarrow{\sim}\FOp_n(\mathring{\DD}{}^n,r)$
for each $r\in\NN$, where $\FOp_n(\mathring{\DD}{}^n,r)$
denotes the configuration space of $r$ points in $\mathring{\DD}{}^n$.
In the case $n=2$, this result implies that $\DOp_2(r)$ forms an Eilenberg--MacLane space $K(P_r,1)$,
where $P_r$ is the pure braid group on $r$ strands in $\mathring{\DD}{}^2$.

The first purpose of this section is to explain that we can elaborate on this result in order to get a model of the class of $\EOp_2$-operads
in the category of operad in groupoids.
In short, we check that we have a relation $\DOp_2\sim\DGB(\CoB)$, where we consider the classifying spaces of a certain operad in groupoids,
the operad of colored braids $\CoB$.
Besides, we will see that this operad $\CoB$ governs the category of strict braided monoidal categories as category of algebras.
Then we use a variant of this operad, the operad of parenthesized braids, which we associate to the category of general braided monoidal categories,
in order to define the Grothendieck--Teichm\"uller group as a group of automorphisms of an operad in the category of groupoids,
and we check that the homotopy of the space of homotopy automorphisms of the little $2$-discs operad
is identified with this group $\GT$.

To be more precise, we deal with versions of the Grothendieck--Teichm\"uller which are associated to various completions of operads in groupoids,
and we accordingly consider various completions of the little $2$-discs operad (namely, the profinite completion and the rationalization)
when we examine the relationship between the Grothendieck--Teichm\"uller group and the homotopy of $E_2$-operads.
In this section, we use a simple definition of these completion operations which we form at the level of the groupoid models of our operads.
In the next section, we revisit the definition of the particular case of the rationalization of operads
by using the Sullivan rational homotopy theory of spaces.

\subsubsection{The operad of colored braids}\label{e2-operads:colored-braids}
Briefly recall that a braid on $r$-strands is an isotopy class of paths $\alpha: [0,1]\rightarrow\FOp(\mathring{\DD}{}^2\times[0,1],r)$
with $\alpha(t) = (\alpha_1(t),\dots,\alpha_r(t))\in\FOp(\mathring{\DD}{}^2\times[0,1],r)$
such that $\alpha_i(t) = (z_i(t),t)$ for each $t\in[0,1]$,
and where we assume that $\alpha(0) = (\alpha_1(0),\dots,\alpha_r(0))$ (respectively, $\alpha(1) = (\alpha_1(1),\dots,\alpha_r(1))$)
is a permutation of fixed contact points $((z_1^0,0),\dots,(z_r^0,0))$ (respectively, $((z_1^0,1),\dots,(z_r^0,1))$)
on the equatorial line $y=0$ of the disc $\DD^2\times\{0\}$ (respectively, $\DD^2\times\{1\}$).
Thus, we have $z_i^0 = (x_i^0,0)$ for $i = 1,\dots,r$, and by convention we can also assume that the contact points
are ordered so that $x_1^0<\dots<x_r^0$.
In what follows, we use the usual representation of an isotopy class of braid in terms of a diagram, which is given by a projection
onto the plan $y=0$ in the space $\mathring{\DD}{}^2\times[0,1]$.
The assumption $\alpha(t)\in\FOp(\mathring{\DD}{}^2\times[0,1],r)$ is equivalent to the requirement that we have $z_i(t)\not=z_j(t)$
for all pairs $i\not=j$.
In this definition, we assume that the strands of a braid are indexed by the set $\{1,\dots,r\}$.
This assumption is not standard in the definition of a braid, but we use this convention
in our definition of colored braids. Intuitively, the indices $i\in\{1,\dots,r\}$ are colors which we assign to the strands of our braids,
as in the following picture:
\begin{equation}\label{eqn:e2-operads:colored-braids:picture}
\alpha = \vcenter{\hbox{\includegraphics[scale=0.66]{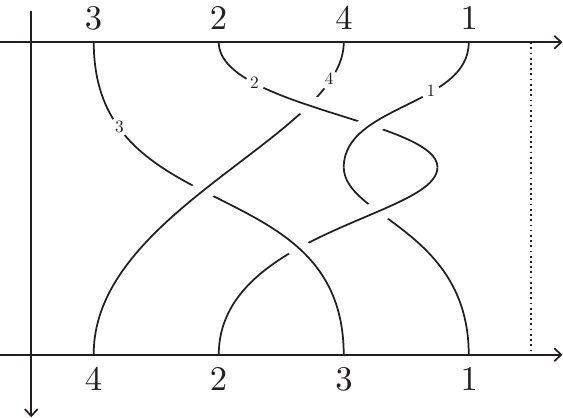}}}.
\end{equation}

Formally, The operad of colored braids is an operad in the category of groupoids $\CoB\in\Grd\Op$ whose components $\CoB(r)$ are groupoids
with the permutations on $r$ letters as objects and the isotopy classes of colored braids as morphisms.
The source (respectively, the target) of a morphism is the permutation of the set $\{1,\dots,r\}$
that corresponds to the permutation of the contact points $((z_1^0,\epsilon),\dots,(z_r^0,\epsilon))$
in the sequence $\alpha(0) = (\alpha_1(0),\dots,\alpha_r(0))$ (respectively, $\alpha(1) = (\alpha_1(1),\dots,\alpha_r(1))$).
For instance, the above example of colored braid depicts a morphism $\alpha\in\Mor\CoB$
with the permutation $s = (2,4,3,1)$ as source and the permutation $t = (3,4,1,2)$
as target.
The composition of braids is given by the standard concatenation operation on paths. Note simply that this operation
preserves the indexing when we consider a pair of composable morphisms in our groupoid.
Note also that our convention is to orient braids from the top to the bottom and we compose braids accordingly.

The action of the symmetric group $\Sigma_r$ on $\CoB(r)$
is given by the obvious re-indexing operation
of the strands of our braids, and the operadic composition operations $\circ_i: \CoB(k)\times\CoB(l)\rightarrow\CoB(k+l-1)$
are functors which are defined on morphisms by a cabling operation on the strands of our braids.
In short, to define a composite $\alpha\circ_i\beta$, where $\alpha\in\CoB(k)$ and $\beta\in\CoB(l)$,
we insert the braid $\beta$ on the $i$th strand of the braid $\alpha$,
as in the example given in the following picture:
\begin{equation}\label{eqn:e2-operads:colored-braids:composition-picture}
\vcenter{\hbox{\includegraphics[scale=0.8]{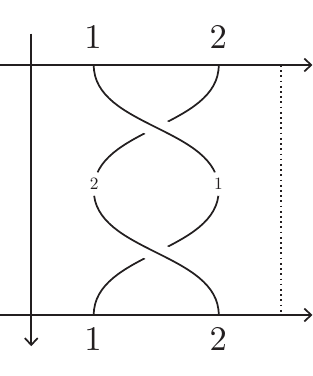}}}
\;\circ_1\;\vcenter{\hbox{\includegraphics[scale=0.8]{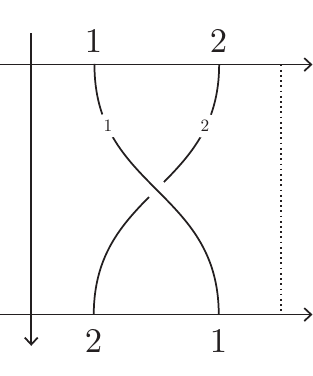}}}
\;=\;\vcenter{\hbox{\includegraphics[scale=0.8]{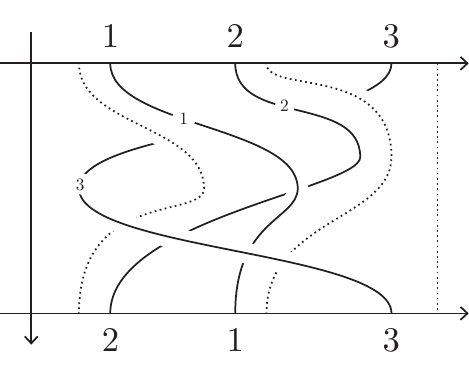}}}.
\end{equation}
The operadic unit is the trivial braid with one strand $1\in\CoB(1)$. Note that we actually have $\CoB(0) = \CoB(1) = *$
in this operad $\CoB$.

The following theorem gives the connection between the operad of little $2$-discs
and the operad of colored braids:

\begin{thm}\label{thm:e2-operads:fundamental-groupoid}
We have an equivalence in the category of operads in groupoids $\pi\DOp_2\sim\CoB$, where $\pi\DOp_2$ is the operad in groupoids
defined by the fundamental groupoids $\pi\DOp_2(r)$ of the spaces of little $2$-discs $\DOp_2(r)$, $r\in\NN$.
\end{thm}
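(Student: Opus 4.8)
The plan is to construct an explicit morphism of operads in groupoids $\iota\colon\CoB\to\pi\DOp_2$ and to show that it is an equivalence of groupoids in each arity. The starting observation is that the fundamental groupoid functor is symmetric monoidal for the cartesian product---$\pi(X\times Y)\cong\pi(X)\times\pi(Y)$ once one takes the basepoints of $X\times Y$ to be products of basepoints of $X$ and of $Y$---so it carries the topological operad $\DOp_2$ to an operad in groupoids, which is exactly the object $\pi\DOp_2$ of the statement. To fix convenient basepoints I would, in each arity $r$, single out the configurations whose little discs are centered, in some left-to-right order, on the horizontal axis of $\mathring{\DD}{}^2$: for each permutation $\sigma\in\Sigma_r$ this gives one basepoint $b_\sigma$, recording the order in which the labels $1,\dots,r$ are met from left to right. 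These are the objects on which $\iota$ will be defined.

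On objects I set $\iota(\sigma)=b_\sigma$. On morphisms I realize a colored braid as a path of configurations: an isotopy class of colored braid from source permutation $\sigma$ to target permutation $\tau$ is a motion of $r$ labelled points in the plane, and thickening each moving point into a small little disc produces a path in $\DOp_2(r)$ from $b_\sigma$ to $b_\tau$, well defined up to homotopy. This is the classical identification, going back to Artin, of the morphisms of the fundamental groupoid of a configuration space with braids; taking the row-configurations $b_\sigma$ as basepoints records precisely the source and target orderings, so that the full (rather than merely the pure) braids appear. Using the homotopy equivalence $\omega\colon\DOp_2(r)\xrightarrow{\sim}\FOp(\mathring{\DD}{}^2,r)$ recalled above, together with the computation $\pi_1(\FOp(\mathring{\DD}{}^2,r))\cong P_r$, one checks that $\iota$ is fully faithful (the morphism sets on both sides are nonempty torsors under $P_r$, and $\iota$ induces the standard isomorphism on vertex groups) and essentially surjective (the spaces $\DOp_2(r)$ are connected), hence an equivalence of groupoids arity-wise.

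It then remains to verify that $\iota$ is a morphism of operads in groupoids. On objects this amounts to the bookkeeping of the block permutation $\sigma\circ_{\sigma(i)}\tau$ governing the symmetric-group equivariance of $\DOp_2$ recalled in \S\ref{background:little-discs-operads}: inserting a horizontal row of $l$ discs into the $i$-th disc of a horizontal row of $k$ discs, and rescaling, is again a horizontal row, whose order is read off by exactly that formula, which is also the composition rule on the objects of $\CoB$. On morphisms, the operadic composite $\alpha\circ_i\beta$ in $\pi\DOp_2$ is induced by placing the $\beta$-motion inside the $i$-th disc of the $\alpha$-motion and running both simultaneously; projecting the disc-centers, this is literally the cabling operation defining $\circ_i$ on $\CoB$. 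The operadic unit $1\in\DOp_2(1)$ is sent to the trivial one-strand braid, matching $\CoB(1)=*$. Assembling these checks gives that $\iota\colon\CoB\to\pi\DOp_2$ is a morphism of operads in groupoids which is an arity-wise equivalence, whence the asserted relation $\pi\DOp_2\sim\CoB$.

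The delicate point is not the arity-wise identification, which is classical, but the simultaneous compatibility of that identification with the operad structure: one must check that realizing a braid by thickened discs and then composing via $\circ_i$ does not change the resulting isotopy class, i.e. that the rescalings and repositionings hidden in the operad composition of $\DOp_2$ correspond, under projection of centers, exactly to cabling, uniformly in all arities. Everything else follows formally from the monoidality of $\pi$ and the connectivity of the configuration spaces.
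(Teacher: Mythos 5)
Your arity-wise analysis is the classical Artin identification and is fine, but the global structure of your argument has a genuine gap: the direct morphism $\iota\colon\CoB\rightarrow\pi\DOp_2$ you propose does not exist as a strict morphism of operads in groupoids. The objects of the fundamental groupoid $\pi\DOp_2(r)$ are literal points of $\DOp_2(r)$, i.e.\ actual configurations of little discs, and the operadic composition on objects is the literal composition of configurations. If you fix one ``row'' configuration $b_\sigma$ per permutation $\sigma$, then $b_\sigma\circ_i b_\tau$ is again a row, but it is \emph{not} the chosen basepoint $b_{\sigma\circ_i\tau}$: its discs have been rescaled and repositioned inside the $i$-th disc of $b_\sigma$. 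Worse, no choice of section can repair this: in the permutation operad $\Ob\CoB=\PiOp$ one has $\id_2\circ_1\id_2=\id_3=\id_2\circ_2\id_2$, whereas $b\circ_1 b$ and $b\circ_2 b$ are distinct configurations for any choice of $b\in\DOp_2(2)$ (the small discs sit on the left in one and on the right in the other). So strict compatibility with $\circ_i$ on objects is impossible, and your appeal to ``projecting the disc-centers'' is not available inside $\pi\DOp_2$ itself, whose objects remember the discs and not just their centers. You correctly flag this as the delicate point in your last paragraph, but the resolution you sketch does not go through.

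The paper circumvents exactly this obstruction with a zigzag rather than a direct map. One forms the full suboperad $\pi\DOp_2\shortmid_{\DOp_1}\subset\pi\DOp_2$ spanned by \emph{all} configurations whose centers lie on the equatorial axis (this collection, unlike a one-per-permutation selection, is closed under $\circ_i$), and then maps it \emph{onto} $\CoB$ by collapsing every axis configuration to the permutation recording its left-to-right order of labels; in that direction the many-to-one assignment on objects is strictly compatible with composition, and on morphisms it is your center-projection. The inclusion $\pi\DOp_2\shortmid_{\DOp_1}\hookrightarrow\pi\DOp_2$ is an equivalence arity-wise for the trivial reason that the spaces $\DOp_2(r)$ are connected, giving $\pi\DOp_2\xleftarrow{\sim}\pi\DOp_2\shortmid_{\DOp_1}\xrightarrow{\sim}\CoB$. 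Note that the statement only claims the relation $\pi\DOp_2\sim\CoB$, i.e.\ a zigzag of equivalences, so nothing is lost; your verifications of essential surjectivity, full faithfulness via $\pi_1(\FOp(\mathring{\DD}{}^2,r))\cong P_r$, and compatibility of cabling with disc insertion all transfer verbatim to the corrected zigzag.
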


\begin{proof}[Explanations and references]
In the category of operads in groupoids $\Grd\Op$, we say that a morphism is an equivalence $\phi: \POp\xrightarrow{\sim}\QOp$
when this morphism defines an equivalence of categories arity-wise $\phi: \POp(r)\xrightarrow{\sim}\QOp(r)$,
for each $r\in\NN$.
Then we say that operads in groupoids $\POp,\QOp\in\Grd\Op$ are equivalent when these operads can be connected by a zigzag
of equivalences $\POp\xleftarrow{\sim}\cdot\xrightarrow{\sim}\cdot\ldots\cdot\xrightarrow{\sim}\QOp$
in the category of operads in groupoids.

To define the operadic structure of the collection of fundamental groupoids $\pi\DOp_2 = \{\pi\DOp_2(r),r\in\NN\}$,
we just use that the fundamental groupoid functor is strongly symmetric monoidal.
The statement of this theorem is formally established in~\cite[Theorem I.5.3.4]{FresseBook}.
In short, the idea is to consider the operad such that $\pi\DOp_2\shortmid_{\DOp_1} = \{\pi\DOp_2(r)\shortmid_{\DOp_1(r)},r\in\NN\}$,
where $\pi\DOp_2(r)\shortmid_{\DOp_1(r)}\subset\pi\DOp_2(r)$
is the full subcategory of the operad in groupoids $\pi\DOp_2(r)$ generated by the configuration of little $2$-discs
whose centers lie on the equatorial axis $y=0$ (equivalently, we consider the image of the map $\iota: \DOp_1\hookrightarrow\DOp_2$
defined in~\S\ref{background:operad-bimodules}).
Then one can prove that we have an equivalence of operads in groupoids $\pi\DOp_2\shortmid_{\DOp_2}\xrightarrow{\sim}\CoB$
induced by the mapping $\DOp_2(r)\xrightarrow{\sim}\FOp(\mathring{\DD}{}^2,r)$
at the space level.

The embedding $\pi\DOp_2\shortmid_{\DOp_1}\hookrightarrow\pi\DOp_2$ also defines an equivalence of operad in groupoids for a trivial reason.
Therefore, we do have a zigzag of equivalences $\pi\DOp_2\xleftarrow{\sim}\pi\DOp_2\shortmid_{\DOp_2}\xrightarrow{\sim}\CoB$
which gives the required equivalence between the fundamental groupoid operad $\pi\DOp_2$
and the operad of colored braids $\CoB$.
\end{proof}

Now we can apply the classifying space functor $\DGB(-)$ to go back to spaces (or to simplicial sets).
This functor $\DGB: \Grd\rightarrow\Top$ is also strongly symmetric monoidal,
and hence, preserves operad structures.
For our purpose, we consider the operad $\DGB(\CoB)$ defined by the collection of the classifying spaces
of the colored braid groupoids $\DGB(\CoB(r))$, $r\in\NN$. We have the following result:

\begin{thm}[{Z. Fiedorowicz~\cite{Fiedorowicz}, see also~\cite[\S I.5.2]{FresseBook}}]\label{thm:e2-operads:classifying-space-operad}
We have a weak-equivalence $\DOp_2\sim\DGB(\CoB)$ in the category of topological operads.
\end{thm}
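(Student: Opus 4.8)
The plan is to combine the groupoid-level equivalence of Theorem~\ref{thm:e2-operads:fundamental-groupoid} with a general principle relating the classifying space of the fundamental groupoid of a space to the space itself. The key observation is that, by the homotopy equivalence $\DOp_2(r)\xrightarrow{\sim}\FOp(\mathring{\DD}{}^2,r)$ recalled in~\S\ref{background:fulton-macpherson-operad}, each space $\DOp_2(r)$ is aspherical: it is an Eilenberg--MacLane space $K(\pi,1)$ whose homotopy is concentrated in degree one (the configuration space of points in the plane is a $K(P_r,1)$ for the pure braid group $P_r$, and the full space $\DOp_2(r)$ realizes the corresponding braid group $B_r$ on the components indexed by $\Sigma_r$). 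For any aspherical space $X$, the natural map $\DGB(\pi X)\xrightarrow{\sim} X$ from the classifying space of the fundamental groupoid is a weak-equivalence. My first step is therefore to verify that this map is compatible with the operad structure, so that it assembles into a morphism of topological operads $\DGB(\pi\DOp_2)\xrightarrow{\sim}\DOp_2$ that is an arity-wise weak-equivalence, hence a weak-equivalence of operads in the sense of~\S\ref{background:en-operads}.

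Second, I would invoke the fact that the classifying space functor $\DGB\colon\Grd\rightarrow\Top$ is strongly symmetric monoidal, as noted just before the statement, so that it carries an equivalence of operads in groupoids to a weak-equivalence of topological operads. More precisely, if $\phi\colon\POp\xrightarrow{\sim}\QOp$ is an arity-wise equivalence of groupoid operads, then $\DGB(\phi)\colon\DGB(\POp)\xrightarrow{\sim}\DGB(\QOp)$ is an arity-wise weak-equivalence of spaces, because the classifying space functor sends an equivalence of categories to a homotopy equivalence. Applying this to the zigzag of equivalences
\begin{equation*}
\pi\DOp_2\xleftarrow{\sim}\pi\DOp_2\shortmid_{\DOp_1}\xrightarrow{\sim}\CoB
\end{equation*}
furnished by Theorem~\ref{thm:e2-operads:fundamental-groupoid} yields a zigzag of weak-equivalences of topological operads
\begin{equation*}
\DGB(\pi\DOp_2)\xleftarrow{\sim}\DGB(\pi\DOp_2\shortmid_{\DOp_1})\xrightarrow{\sim}\DGB(\CoB).
\end{equation*}

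Concatenating these two inputs gives the desired chain $\DOp_2\xleftarrow{\sim}\DGB(\pi\DOp_2)\sim\DGB(\CoB)$, which establishes the weak-equivalence relation $\DOp_2\sim\DGB(\CoB)$ in the category of topological operads. I expect the main obstacle to be the first step, namely checking that the counit $\DGB(\pi\DOp_2)\rightarrow\DOp_2$ is genuinely a \emph{morphism of operads} and not merely a collection of maps of spaces: one must confirm that passing to fundamental groupoids and then to classifying spaces is compatible with the operadic composition products $\circ_i$ and the symmetric group actions, which rests on the strong symmetric monoidality of both $\pi(-)$ and $\DGB(-)$ together with the asphericity that makes the counit a weak-equivalence. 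Once this naturality and monoidality bookkeeping is in place, the asphericity of $\DOp_2(r)$ does the remaining work, and the result follows formally from Theorem~\ref{thm:e2-operads:fundamental-groupoid}. This is precisely the argument attributed to Fiedorowicz, and I would cite~\cite[\S I.5.2]{FresseBook} for the detailed verification of the operadic compatibility.
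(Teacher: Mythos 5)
Your proposal is correct and follows essentially the same route as the paper: deduce an arity-wise weak-equivalence $\DOp_2\sim\DGB(\pi\DOp_2)$ from the asphericity of the spaces $\DOp_2(r)$, upgrade it to a weak-equivalence of operads, and then apply the classifying space functor to the zigzag of Theorem~\ref{thm:e2-operads:fundamental-groupoid}. The only cosmetic difference is that the paper leaves the direction of the comparison between $\DOp_2$ and $\DGB(\pi\DOp_2)$ unspecified (the natural map goes $X\rightarrow\DGB(\pi X)$ rather than being a counit), but this does not affect the argument.
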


\begin{proof}[Explanations and references]
This theorem is established by Fiedorowicz in the cited reference~\cite{Fiedorowicz}
by using arguments of covering theory. In~\cite[\S I.5.3]{FresseBook},
we explain that we can also deduce this weak-equivalence relation $\DOp_2\sim\DGB(\CoB)$
from the result of the previous theorem and the observation that each space $\DOp_2(r)$
is an Eilenberg--MacLane space, so that we have a weak-equivalence $\DOp_2(r)\sim\DGB(\pi\DOp_2(r))$,
for each arity $r\in\NN$.
Indeed, we can elaborate on the proof of this result to establish that we have a weak-equivalence of operads $\DOp_2\sim\DGB(\pi\DOp_2)$,
and then we just use that the zigzag of equivalences of the proof of Theorem~\ref{thm:e2-operads:fundamental-groupoid}
induces a zigzag of weak-equivalences $\DGB(\pi\DOp_2)\xleftarrow{\sim}\DGB(\pi\DOp_2\shortmid_{\DOp_1})\xrightarrow{\sim}\DGB(\CoB)$
when we pass to classifying spaces.
\end{proof}

\subsubsection{The operad of parenthesized braids}\label{e2-operads:parenthesized-braids}
The operads of colored braids are not sufficient for our purpose. To define the Grothendieck--Teichm\"uller group, we need a variant of this operad,
which we call the parenthesized braid operad $\PaB$.

The objects of the colored braid operad form an operad in sets $\Ob\CoB$
which is identified with the permutation operad.
Thus we have $\Ob\CoB = \PiOp$.
To define the operad of parenthesized braids, we just take a pullback of the operad of coloured braids $\PaB = \omega^*\CoB$
along a morphism $\omega: \OmegaOp\rightarrow\PiOp$,
where $\OmegaOp$ is a free operad in sets generated by a single operation $\mu\in\OmegaOp(2)$, on which the symmetric group $\Sigma_2$
acts freely, and where we take an extra zero-ary operation $*\in\OmegaOp(0)$
such that $\mu\circ_1 * = 1 = \mu\circ_2 *$.
The elements of the sets $\OmegaOp(r)$ underlying this operad $\OmegaOp$ are identified with planar binary rooted trees
whose leaves are indexed by the elements of the set $\{1,\dots,r\}$ (with the convention that this set of binary trees
is equal to the one-point set in arity zero, so that we have $\OmegaOp(0) = *$).
In~\cite[\S I.6.1]{FresseBook},
we call this operad $\OmegaOp$ the magma operad, because the category of algebras associated to this operad
is identified with the category of Bourbaki's non-commutative magmas (with unit).

Formally, we define the groupoids $\PaB(r)$ underlying the operad $\PaB$ by taking $\Ob\PaB(r) := \OmegaOp(r)$ as object set
and $\Mor_{\PaB(r)}(p,q) := \Mor_{\CoB(r)}(\omega(p),\omega(q))$
for the morphism sets, for all $p,q\in\OmegaOp(r)$.
The operadic composition operations of the operad of parenthesized braids
are defined by an obvious lifting of the composition operations
of the operad of colored braids.
In~\cite[\S I.6.2]{FresseBook},
we represent a parenthesized braid by a braid whose contact points form the centers of a diadic decomposition
of the axis $y=0$ in the disc $\mathring{\DD}{}^2$,
because one can observe that such decomposition are in bijection with the elements of the magma operad.
For instance, the following braid
\begin{equation}\label{eqn:e2-operads:parenthesized-braids:picture}
\beta = \vcenter{\hbox{\includegraphics[scale=.8]{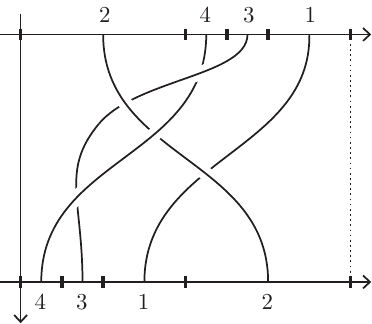}}}.
\end{equation}
represents a morphism of the groupoid $\PaB$ with the object $p = (1\ 2\ 4)\cdot(\mu\circ_2(\mu\circ_1\mu))$ as a source
and the object $q = (1\ 4\ 2\ 3)\cdot(\mu\circ_1(\mu\circ_1\mu))$ as a target.

In the operad $\PaB$, we consider the morphisms
\begin{equation}\label{eqn:e2-operads:parenthesized-braids:braiding-and-associator-picture}
\tau = \vcenter{\hbox{\includegraphics{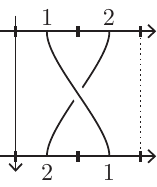}}}
\qquad\text{and}
\qquad\alpha = \vcenter{\hbox{\includegraphics{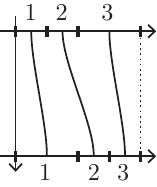}}},
\end{equation}
which we call the braiding and the associator respectively.

We aim to give an interpretation of the parenthesized braid operad in classical algebraic language.
We use that the object $\mu\in\OmegaOp(2)$ can be regarded as an abstract operation
on $2$ variables $\mu = \mu(x_1,x_2)$ (as we explain in the context of the $n$-Poisson operad
in our survey of Theorem~\ref{thm:background:little-discs-homology}).
We also use the notation of a tensor product for this operation $\mu(x_1,x_2) = x_1\otimes x_2$,
because we are going to see that $\mu$ represents a universal tensor product operation
within the operad $\PaB$.
We get that $(1\ 2)\mu = \mu(x_2,x_1)$ represent an operation $\mu(x_2,x_1) = x_2\otimes x_1$,
where the variables $(x_1,x_2)$ are transposed
when we use this variable interpretation of our operation. We also get that $\mu\circ_1\mu = \mu(\mu(x_1,x_2),x_3)$
represents the result of the substitution of the variable $x_1$ by the operation $\mu = \mu(x_1,x_2)$
in $\mu = \mu(x_1,x_2)$, while $\mu\circ_2\mu = \mu(x_1,\mu(x_2,x_3))$
represents the result of the substitution of the second variable $x_2$ by the same operation $\mu = \mu(x_1,x_2)$
with an index shift of the variables. We equivalently have $\mu(\mu(x_1,x_2),x_3) = (x_1\otimes x_2)\otimes x_3$
and $\mu(x_1,\mu(x_2,x_3)) = x_1\otimes(x_2\otimes x_3)$.
We accordingly get that the braiding $\tau = \tau(x_1,x_2)$ represents an isomorphism such that
\begin{equation}\label{eqn:e2-operads:parenthesized-braids:braiding}
\tau(x_1,x_2): x_1\otimes x_2\rightarrow x_2\otimes x_1
\end{equation}
in the morphism set $\Mor_{\PaB(2)}(\mu,(1\ 2)\mu)$ of our operad in groupoids $\PaB$,
while the associator $\alpha = \alpha(x_1,x_2,x_3)$
represents an isomorphism such that
\begin{equation}\label{eqn:e2-operads:parenthesized-braids:associator}
\alpha(x_1,x_2,x_3): (x_1\otimes x_2)\otimes x_3\rightarrow x_1\otimes(x_2\otimes x_3)
\end{equation}
in  $\Mor_{\PaB(3)}(\mu\circ_1\mu,\mu\circ_2\mu)$.
We also have the interpretation
\begin{equation}\label{eqn:e2-operads:parenthesized-braids:unit-relations}
x_1\otimes * = x_1 = *\otimes x_1
\end{equation}
of the operadic composition relations $\mu\circ_1 * = 1 = \mu\circ_2 *$.

We easily see that the braiding and the associator satisfy the following coherence relations with respect to the unit object:
\begin{equation}\label{eqn:e2-operads:parenthesized-braids:unit-coherence}
\alpha(e,x_1,x_2) = \alpha(x_1,e,x_2) = \alpha(x_1,x_2,e) = \id\quad\text{and}\quad\tau(x_1,e) = \id = \tau(e,x_1).
\end{equation}
We easily check, moreover, that the following pentagon equation
\begin{multline}\label{eqn:e2-operads:parenthesized-braids:pentagon}
x_1\otimes\alpha(x_2,x_3,x_4)\cdot\alpha(x_1,x_2\otimes x_3,x_4)\cdot\alpha(x_1,x_2,x_3)\otimes x_4\\
= \alpha(x_1,x_2,x_3\otimes x_4)\cdot\alpha(x_1\otimes x_2,x_3,x_4)
\end{multline}
holds in $\PaB(4)$, as well as the following hexagon relations
\begin{multline}\label{eqn:e2-operads:parenthesized-braids:hexagon-1eft}
x_2\otimes\tau(x_1,x_3)\cdot\alpha(x_2,x_1,x_3)\cdot\tau(x_1,x_2)\otimes x_3\\
= \alpha(x_2,x_3,x_1)\cdot\tau(x_1,x_2\otimes x_3)\cdot\alpha(x_1,x_2,x_3)
\end{multline}
and
\begin{multline}\label{eqn:e2-operads:parenthesized-braids:hexagon-right}
\tau(x_1,x_3)\otimes x_2\cdot\alpha(x_1,x_3,x_2)^{-1}\cdot x_1\otimes\tau(x_2,x_3)\\
= \alpha(x_3,x_1,x_2)^{-1}\cdot\tau(x_1\otimes x_2,x_3)\cdot\alpha(x_1,x_2,x_3)^{-1}
\end{multline}
in $\PaB(3)$ (exercise: check these relations). Note however that the isomorphism $\tau(x_1,x_2)$ is not involutive
in the sense that $\tau(x_2,x_1)\cdot\tau(x_1,x_2)\not=\id$,
because the braid which represents this isomorphism in the braid group
is not involutive too.

Thus we obtain that the object $\mu(x_1,x_2) = x_1\otimes x_2\in\Ob\PaB(2)$ can be interpreted as an abstract tensor product operation
that can be used to govern the structure of a braided monoidal category, with the object $*\in\OmegaOp(0) = \Ob\PaB(0)$
as a strict unit, but a general associativity isomorphism $\alpha = \alpha(x_1,x_2,x_3)$
and a general braiding isomorphism $\tau = \tau(x_1,x_2)$,
which are depicted in~(\ref{eqn:e2-operads:parenthesized-braids:braiding-and-associator-picture}).
The operad $\PaB$, equipped with these generating elements, is actually the universal operad
that governs such structures. This claim is established in the following statement:

\begin{thm}[{see~\cite[Theorem I.6.2.4]{FresseBook}}]\label{thm:e2-operads:parenthesized-braid-universality}
Let $\MOp\in\Cat\Op$ be an operad in the category of small categories $\CCat = \Cat$.
Fixing an operad morphism $\phi: \PaB\rightarrow\MOp$ amounts to fixing a unit object $e\in\Ob\MOp(0)$,
a product object $m\in\Ob\MOp(2)$,
an associativity isomorphism $a\in\Mor_{\MOp(3)}(m\circ_1 m,m\circ_2 m)$,
and a braiding isomorphism $c\in\Mor_{\MOp(2)}(m,(1\ 2) m)$
which satisfy strict unit relations $m\circ_1 e = 1 = e\circ_1 m$
together with the coherence constraints of the unit, pentagon
and hexagon relations~(\ref{eqn:e2-operads:parenthesized-braids:unit-relations}-\ref{eqn:e2-operads:parenthesized-braids:hexagon-right})
in the operad $\MOp$.
\end{thm}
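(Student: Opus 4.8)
The plan is to read this statement as a presentation result: I want to exhibit $\PaB$ as the operad in groupoids that is \emph{freely} generated by the data $(*,\mu,\alpha,\tau)$ subject \emph{exactly} to the unit, pentagon and hexagon relations, so that an operad morphism out of $\PaB$ is the same thing as a choice of images for these four generators compatible with those relations. One direction of the asserted bijection is formal: given an operad morphism $\phi\colon\PaB\rightarrow\MOp$, I simply set $e = \phi(*)$, $m = \phi(\mu)$, $a = \phi(\alpha)$ and $c = \phi(\tau)$; since $\phi$ preserves operadic and groupoid composition and the relations~(\ref{eqn:e2-operads:parenthesized-braids:unit-coherence}-\ref{eqn:e2-operads:parenthesized-braids:hexagon-right}) hold in $\PaB$, their images hold in $\MOp$. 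All the content lies in the reverse direction, namely that any such compatible choice of $(e,m,a,c)$ extends to a \emph{unique} operad morphism, and I would organize this in two layers, first on objects and then on morphisms.

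First I would treat the object level. By construction $\Ob\PaB = \OmegaOp$ is the magma operad, which is the free operad in sets generated by the single binary operation $\mu$ (with free $\Sigma_2$-action) together with the nullary operation $*$, subject only to the unit relations $\mu\circ_1 * = 1 = \mu\circ_2 *$. By the universal property of the free operad, the restriction of $\phi$ to objects is therefore the same datum as a product object $m = \phi(\mu)\in\Ob\MOp(2)$ and a unit object $e = \phi(*)\in\Ob\MOp(0)$ satisfying the strict unit relations $m\circ_1 e = 1 = m\circ_2 e$, whose algebraic interpretation is~(\ref{eqn:e2-operads:parenthesized-braids:unit-relations}). This accounts for the objects $m,e$ of the statement and for their unit constraint, and it pins down $\phi$ on all objects of $\PaB$.

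Next I would pass to the morphism level, which carries the real work. Using Theorem~\ref{thm:e2-operads:fundamental-groupoid} and the defining identity $\Mor_{\PaB(r)}(p,q) = \Mor_{\CoB(r)}(\omega(p),\omega(q))$, the morphisms of $\PaB(r)$ are governed by the braid groups. The first key step is a \emph{generation} claim: as an operad in groupoids, the morphisms of $\PaB$ are generated, under operadic composition $\circ_i$ and groupoid composition, by the braiding $\tau$ of~(\ref{eqn:e2-operads:parenthesized-braids:braiding}), the associator $\alpha$ of~(\ref{eqn:e2-operads:parenthesized-braids:associator}), and their inverses. Concretely, I would realize an arbitrary elementary crossing $\sigma_i$ of a braid as an operadic composite of $\tau$ conjugated by the associativity isomorphisms needed to bring the two relevant strands into a standard adjacent parenthesization; since every braid is a product of such elementary crossings, every morphism of $\PaB$ is then a composite of instances of $\tau^{\pm 1}$ and $\alpha^{\pm 1}$. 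Consequently $\phi$ is completely determined on morphisms by the images $c = \phi(\tau)$ and $a = \phi(\alpha)$, which gives uniqueness of the extension.

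It remains to prove well-definedness, and this is where I expect the main obstacle. I would have to show that the unit coherence, pentagon and hexagon relations~(\ref{eqn:e2-operads:parenthesized-braids:unit-coherence}-\ref{eqn:e2-operads:parenthesized-braids:hexagon-right}) form a \emph{complete} set of relations for $\PaB$ on these generators, that is, every relation holding among composites of $\tau$ and $\alpha$ in the braid groupoids is a formal consequence of them. This is the coherence theorem for braided monoidal categories read through the Artin presentation of the braid groups: the pentagon yields Mac Lane coherence, so that any two parenthesizations of a given word are joined by a unique canonical morphism built from $\alpha$, and the two hexagons, transported through this identification, reproduce exactly the braid relation $\sigma_i\sigma_{i+1}\sigma_i = \sigma_{i+1}\sigma_i\sigma_{i+1}$, while the distant-commutation relations $\sigma_i\sigma_j = \sigma_j\sigma_i$ for $|i-j|\geq 2$ follow automatically from the interchange (exchange) law between non-overlapping operadic compositions. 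Granting this completeness, the assignment $\tau\mapsto c$, $\alpha\mapsto a$ respects every relation in $\PaB$ precisely when $(a,c)$ satisfy the images of~(\ref{eqn:e2-operads:parenthesized-braids:unit-coherence}-\ref{eqn:e2-operads:parenthesized-braids:hexagon-right}) in $\MOp$, which is the claimed correspondence. I would therefore concentrate the effort on verifying that $\PaB$ carries no further hidden relations, and the cleanest route is to match the presentation of $\PaB$ obtained from the two steps above directly against Artin's presentation of the braid groups, rather than reproving the coherence theorem from scratch.
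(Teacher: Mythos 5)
Your proposal is correct and follows essentially the same route as the paper, which describes its proof precisely as "a combination of an operadic interpretation of the MacLane coherence theorem and of the classical presentation of the braid group by generators and relations" — exactly the two ingredients you isolate (freeness of the magma operad on objects, generation of morphisms by $\tau^{\pm1},\alpha^{\pm1}$, and completeness of the pentagon/hexagon relations matched against Artin's presentation). The only cosmetic remark is that your unit relation $m\circ_1 e = 1 = m\circ_2 e$ is the correct reading of the statement's $m\circ_1 e = 1 = e\circ_1 m$, since $e$ has arity zero.
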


\begin{proof}[Explanations]
This theorem is established in the cited reference.
The morphism associated to the quadruple $(m,e,a,c)$ given in the theorem
is obviously determined by the formulas $\phi(\mu) = m$, $\phi(*) = e$, $\phi(\alpha) = a$ and $\phi(\tau) = c$.
The claim is that this assignments determines a well-defined morphism on $\PaB$.
The proof of this result follows from a combination
of an operadic interpretation of the MacLane coherence theorem and of the classical presentation of the braid group
by generators and relations.

This statement implies that the category of algebras governed by the operad $\PaB$ in the category of categories
is identified with a category of braided monoidal categories
with a strict unit,
but general associativity isomorphisms.
Let us mention that the operad of colored braid has a similar interpretation, but where we have strict associativity identities
instead of associativity isomorphisms.
We refer to~\cite[\S I.6.2]{FresseBook} for more detailed explanations on this subject.
\end{proof}

\subsubsection{The Grothendieck--Teichm\"uller group}\label{e2-operads:grothendieck-teichmuller-group}
The Grothendieck--Teichm\"uller group is defined as a group of automorphisms of the parenthesized braid operad.
To be more precise, we have to consider completions of this operad in applications.
These completions operations are performed at the groupoid level.
In what follows, we mainly consider the case of the Malcev completion, which we denote by $G\sphat_{\QQ}$ for any groupoid $G\in\Grd$,
and the profinite completion, which we denote by $G\sphat$ (yet another natural example of completion operation
is the $p$-profinite completion,
but we do not consider this variant of the profinite completion
in this survey).
In all cases, the considered completion operation does not change the object sets of our groupoids
and are natural generalizations, for groupoids, of classical completion operations
on groups.
Recall simply that the Malcev completion of groups is an extension of the classical rationalization of abelian groups
which combines a pro-nilpotent completion with a rationalization operation.
In the case of a free group for instance, we can identify the elements of the Malcev completion
with infinite products of iterated commutators with rational exponents. (We refer to~\cite[\S I.8]{FresseBook}
for a detailed survey of this subject.)

To define the rationalization $\PaB\sphat_{\QQ}$ (respectively, the profinite completion $\PaB\sphat$) of our operad,
we just perform the arity-wise completion operation $\PaB\sphat_{\QQ}(r) = \PaB(r)\sphat_{\QQ}$ (respectively, $\PaB\sphat(r) = \PaB(r)\sphat$).
Then we define the rational (respectively, the profinite) Grothendieck--Teichm\"uller group $\GT(\QQ)$ (respectively, $\GT\sphat$)
as the group of automorphisms of the operad $\PaB\sphat_{\QQ}$ (respectively, $\PaB\sphat$)
which reduce to the identity mapping on objects.

Let us mention that any morphism 
such that $\phi: \PaB\rightarrow\PaB\sphat_{\QQ}$
admits a unique extension to the completed operad $\phi\sphat: \PaB\sphat_{\QQ}\rightarrow\PaB\sphat_{\QQ}$
and a similar result holds in the context of the profinite completion $\PaB\sphat$.
By Theorem~\ref{thm:e2-operads:parenthesized-braid-universality},
such a morphism $\phi: \PaB\rightarrow\PaB\sphat_{\QQ}$ (respectively, $\phi: \PaB\rightarrow\PaB\sphat$)
is fully determined by giving a triple $(m,a,c)$
such that $m = \phi(\mu)$, $a = \phi(\alpha)$ and $c = \phi(\tau)$. Note that we automatically have $\phi(*) = *$
since $\PaB(0) = *\Rightarrow\PaB(0)^{\QQ} = *$ and similarly $\PaB(0)\sphat = *$.
For our purpose, we also set $m = \phi(\mu) = \mu$ in order to ensure that $\phi$ is the identity mapping on objects.

Now we necessarily have $\phi(\tau) = \tau\cdot\tau^{2\nu}$ for some parameter $\nu\in\QQ$ (respectively, $\nu\in\ZZ$),
where we use that $\tau^2\in\Mor_{\PaB(2)}(\mu,\mu)$
is identified with an element of the pure braid group on $2$ strands $P_2$
and $\tau^{2\nu}$ with $\nu\in\QQ$ (respectively, $\nu\in\ZZ\sphat$)
represents an element in the Malcev completion of this group $(P_2)\sphat_{\QQ}$ (respectively in the profinite completion $P_2\sphat$).
We similarly have $\phi(\alpha) = \alpha\cdot f$ for some morphism $f\in\Mor_{\PaB(3)\sphat_{\QQ}}(\mu\circ_1\mu,\mu\circ_1\mu)$
(respectively, $f\in\Mor_{\PaB(3)\sphat}(\mu\circ_1\mu,\mu\circ_1\mu)$),
which is represented by an element of the Malcev completion of the pure braid group on three strand $(P_3)\sphat_{\QQ}$
(respectively, of the profinite completion $P_3\sphat$).
We have $P_3 = \langle K\rangle\times\langle x_{12},x_{23}\rangle$, where $K$ is a central element, whereas $x_{12}$ and $x_{23}$
denote the pure braids
such that:
\begin{equation}\label{eqn:e2-operads:grothendieck-teichmuller-group:pure-3-braid-generators}
x_{12} = \vcenter{\hbox{\includegraphics{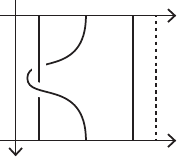}}},\quad x_{23} = \vcenter{\hbox{\includegraphics{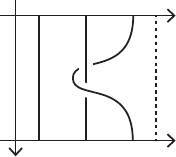}}}.
\end{equation}
We can easily deduce from the unit relation $a\circ_2 * = \id$ that the associator $\phi(\alpha) = \alpha\cdot f$
has no factor in $\langle K\rangle\sphat_{\QQ}$ (respectively, in $\langle K\rangle\sphat$).
We eventually get that our morphism $\phi$ is determined by an assignment
of the form:
\begin{equation}\label{eqn:e2-operads:grothendieck-teichmuller-group:mapping}
\phi(\mu) := \mu,\quad\phi(\tau) := \tau\cdot\tau^{2\nu} = \tau^{\lambda},\quad\phi(\alpha) := \alpha\cdot f(x_{12},x_{23}),
\end{equation}
where we set $\lambda = 1+2\nu$ for $\nu\in\QQ$ (respectively, $\nu\in\ZZ\sphat$)
and we assume $f = f(x_{12},x_{23})\in\langle x_{12},x_{23}\rangle\sphat_{\QQ}$ (respectively, $f = f(x_{12},x_{23})\in\langle x_{12},x_{23}\rangle\sphat$).
In what follows, we use that this element $f$ in the Malcev completion (respectively, in the profinite completion)
of a free group $F = \langle x_{12},x_{23}\rangle$ can (at least symbolically) be represented by a series
on two abstract variables $f = f(x,y)$.

Then one can prove that the unit relations $a\circ_1 e = \id = a\circ_2 e$
in the coherence constraints of Theorem~\ref{thm:e2-operads:parenthesized-braid-universality}
are equivalent to the identities:
\begin{equation}\label{eqn:e2-operads:grothendieck-teichmuller-group:unit-constraint}
f(x,1) = x = f(1,x)
\end{equation}
while the hexagon relations are equivalent to the following system of equations
\begin{gather}\label{eqn:e2-operads:grothendieck-teichmuller-group:symmetry-constraint}
f(x,y)\cdot f(y,x) = 1,\\
\label{eqn:e2-operads:grothendieck-teichmuller-group:hexagon-constraint}
f(x,y)\cdot x^{\nu}\cdot f(z,x)\cdot z^{\nu}\cdot f(y,z)\cdot y^{\nu} = 1,
\end{gather}
for a triple of variables $(x,y,z)$ such that $z y x = 1$.
The pentagon equation is equivalent to the following relation in the Malcev completion of the pure braid group on $4$ strands $(P_4)\sphat_{\QQ}$
(respectively, in the profinite completion $P_4\sphat$):
\begin{equation}\label{eqn:e2-operads:grothendieck-teichmuller-group:pentagon-constraint}
f(x_{23},x_{34}) f(x_{13} x_{12},x_{34} x_{24}) f(x_{12},x_{23}) = f(x_{12},x_{24} x_{23}) f(x_{23} x_{13},x_{34}),
\end{equation}
where in general, we use the notation $x_{ij}$ for the pure braid group elements such that:
\begin{equation}\label{eqn:e2-operads:grothendieck-teichmuller-group:pure-r-braid-generators}
x_{ij} = \vcenter{\hbox{\includegraphics[scale=.66]{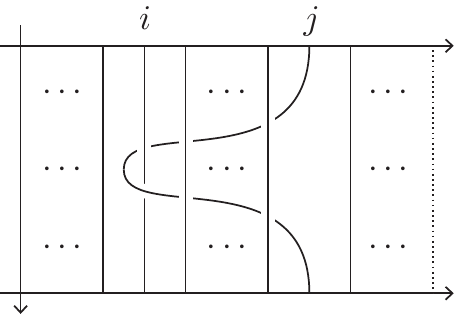}}}.
\end{equation}
(We refer to~\cite{Drinfeld} and to~\cite[Proof of Proposition I.11.1.4]{FresseBook} for a more detailed analysis of these equations.)

Let us mention that the composition of morphisms
corresponds to the following operation
\begin{equation}\label{eqn:e2-operads:grothendieck-teichmuller-group:composition}
(\lambda,f(x,y))*(\mu,g(x,y)) = (\lambda\mu,f(x,y)\cdot g(x^{\lambda},f(x,y)^{-1}\cdot y^{\lambda}\cdot f(x,y)))
\end{equation}
on this set of pairs $(\lambda,f)$ (exercise: check this formula).
Eventually, we get that an element of the Grothendieck--Teichm\"uller group $\gamma\in\GT(\QQ)$ (respectively, $\gamma\in\GT\sphat$),
which corresponds to a morphism $\phi: \PaB\rightarrow\PaB\sphat_{\QQ}$ (respectively, $\phi: \PaB\rightarrow\PaB\sphat$)
that induces an isomorphism on the Malcev completion $\phi\sphat: \PaB\sphat_{\QQ}\xrightarrow{\simeq}\PaB\sphat_{\QQ}$
(respectively on the profinite completion $\phi\sphat: \PaB\sphat\xrightarrow{\simeq}\PaB\sphat$),
is equivalent to a pair $(\lambda,f)\in\QQ\times\langle x_{12},x_{23}\rangle\sphat_{\QQ}$
(respectively, $(\lambda,f)\in\ZZ\sphat\times\langle x_{12},x_{23}\rangle\sphat$)
which is invertible with respect to this composition operation.
In the case of the rational Grothendieck--Teichm\"uller group $\GT(\QQ)$ we can see that a necessary and sufficient condition
for this invertibility property is given by $\lambda\in\QQ^{\times}$ (see~\cite{Drinfeld} and~\cite[Proposition I.11.1.5]{FresseBook}),
but we do not have such a simple characterization of the isomorphisms
in the profinite setting.

This representation of the elements of the Grothendieck--Teichm\"uller group in terms of pairs $(\lambda,f)$
and the equations~(\ref{eqn:e2-operads:grothendieck-teichmuller-group:unit-constraint}-\ref{eqn:e2-operads:grothendieck-teichmuller-group:pentagon-constraint})
is actually Drinfeld's original definition of the Grothendieck--Teichm\"uller group
in~\cite{Drinfeld}.
The correspondence between this definition and the operadic definition which we summarize in this paragraph is established with full details
in the book~\cite[\S I.11.1]{FresseBook}, but the ideas underlying this operadic interpretation
were already implicitly present in Drinfeld's work~\cite{Drinfeld}. We also refer to~\cite{BarNatan}
for another formalization of this interpretation,
which uses ideas close to the language of universal algebra.
In the introduction of the paper, we mentioned that the Grothendieck--Teichm\"uller group
was defined by using ideas of the Grothendieck program in Galois theory. In fact, we have an embedding $\Gal(\bar{\QQ}/\QQ)\hookrightarrow\GT\sphat$
which is defined by using an action of the absolute Galois group on genus zero curves with marked points (see~\cite{Drinfeld}).
For the rational Grothendieck--Teichm\"uller group, a result of F. Brown's (see~\cite{BrownTate})
implies that we have an analogous embedding $\Gal_{\MT(\ZZ)}\hookrightarrow\GT(\QQ)$,
where $\Gal_{\MT(\ZZ)}$ now denotes the motivic Galois group
of a category of integral mixed Tate motives (see also~\cite{DeligneGoncharov,Terasoma}
for the definition of this group and for the definition of this mapping).

We go back to the definition of the Grothendieck--Teichm\"uller group $\GT(\QQ)$ (respectively, $\GT\sphat$)
in terms of operad isomorphisms $\phi_{\gamma}\sphat: \PaB\sphat_{\QQ}\xrightarrow{\simeq}\PaB\sphat_{\QQ}$
(respectively, $\phi_{\gamma}\sphat: \PaB\sphat\xrightarrow{\simeq}\PaB\sphat$).
We can regard the classifying space operad $\EOp_2^{\QQ} = \DGB(\PaB\sphat_{\QQ})$ (respectively, $\EOp_2\sphat = \DGB(\PaB\sphat$)
as a model for the rationalization (respectively, for the profinite completion)
of the $E_2$-operad $\EOp_2 = \DGB(\PaB)$.
We deduce from the functoriality of the classifying space construction that any element $\gamma\in\GT(\QQ)$ (respectively, $\gamma\in\GT\sphat$)
induces an automorphism $\phi_{\gamma}\sphat: \EOp_2^{\QQ}\xrightarrow{\sim}\EOp_2^{\QQ}$
(respectively, $\phi_{\gamma}: \EOp_2\sphat\xrightarrow{\sim}\EOp_2\sphat$)
at the topological operad level,
and hence, defines an element in the homotopy automorphism space $\Aut_{\Op}^h(\EOp_2^{\QQ})$ (respectively, $\Aut_{\Op}^h(\EOp_2\sphat)$).
We claim that this correspondence $\gamma\mapsto\phi_{\gamma}\sphat$
induces a bijection when we pass to the group of homotopy classes
of homotopy automorphisms.
We deduce this claim from a more precise determination of our spaces of homotopy automorphisms in the homotopy category.
We give the expression of this result in the rational setting first:

\begin{thm}[{B. Fresse \cite[Theorem III.5.2.5]{FresseBook}}]\label{thm:e2-operads:rational-homotopy-automorphisms}
We have $\Aut_{\Op}^h(\EOp_2^{\QQ})\sim\GT(\QQ)\ltimes\DGB(\QQ)$.
\end{thm}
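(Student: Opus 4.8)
The plan is to transport the entire computation from the topological operad $\EOp_2^{\QQ}$ to its groupoid model and then to read off the homotopy automorphism space from the self-equivalences and the natural transformations of the operad in groupoids $\PaB\sphat_{\QQ}$. I would start from the identity $\EOp_2^{\QQ}=\DGB(\PaB\sphat_{\QQ})$ together with the fact that $\DGB:\Grd\rightarrow\Top$ is strongly symmetric monoidal, so that it induces a functor $\Grd\Op\rightarrow\Top\Op$. The first key step is a comparison statement: since each space $\DGB(\PaB\sphat_{\QQ}(r))$ is aspherical (an Eilenberg--MacLane space, as in Theorem~\ref{thm:e2-operads:classifying-space-operad}), the derived mapping spaces of the classifying space operads are modeled by mapping spaces of operads in groupoids, and consequently
\[
\Aut_{\Op}^h(\EOp_2^{\QQ})\sim\Aut_{\Grd\Op}^h(\PaB\sphat_{\QQ}),
\]
where the right-hand side denotes the homotopy automorphism space computed in operads in groupoids (a nerve-type object assembled from self-equivalences and their natural isomorphisms). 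Establishing this equivalence --- checking that passing to classifying spaces neither loses nor creates homotopical information beyond the $1$-truncated data carried by the groupoid operad, and that the arity-wise Malcev completion at the groupoid level agrees with the rationalization of the topological operad --- is the most technical input, and I expect it to be the \emph{main obstacle}.

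Granting this reduction, I would compute the homotopy groups of $\Aut_{\Grd\Op}^h(\PaB\sphat_{\QQ})$. For $\pi_0$, the classes are self-equivalences of $\PaB\sphat_{\QQ}$ modulo natural isomorphism. The object operad $\Ob\PaB=\OmegaOp$ is the rigid magma operad, so a rectification argument shows that every self-equivalence is naturally isomorphic to one that is the identity on objects; by the universality of Theorem~\ref{thm:e2-operads:parenthesized-braid-universality} such an object-fixing automorphism is determined by the images of $\mu$, $\tau$, $\alpha$, hence by a pair $(\lambda,f)$ subject to the unit, hexagon and pentagon constraints~(\ref{eqn:e2-operads:grothendieck-teichmuller-group:unit-constraint}--\ref{eqn:e2-operads:grothendieck-teichmuller-group:pentagon-constraint}). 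This identifies $\pi_0$ with $\GT(\QQ)$. Here one must also verify the injectivity direction, namely that natural isomorphisms between object-fixing automorphisms do not further identify distinct elements of $\GT(\QQ)$.

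For $\pi_1$, the relevant data is the group of natural automorphisms of the identity self-equivalence that are compatible with the operadic composition and with the symmetric group actions --- the ``operadic centre'' of $\PaB\sphat_{\QQ}$. Such a natural transformation assigns to each object a central element of the corresponding Malcev-completed pure braid group, and operadic compatibility forces the whole family to be determined by its arity-$2$ component $\tau^{2\nu}\in\Aut_{\PaB(2)}(\mu)$, where $\langle\tau^2\rangle=P_2$ has Malcev completion $\QQ$. Hence $\pi_1\cong\QQ$, generated by the global full twist; this is exactly the rational circle action $\SO(2)^{\QQ}$ coming from the rotations of little discs, and there is no higher homotopy because the ambient objects are $1$-groupoids.

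Finally I would determine the $\pi_0$-action and assemble the semidirect product. An element $(\lambda,f)\in\GT(\QQ)$ sends $\tau$ to $\tau^{\lambda}$ and thus scales $\nu$ by $\lambda$, so $\GT(\QQ)$ acts on $\pi_1=\QQ$ through the character $(\lambda,f)\mapsto\lambda$. Since the only nonvanishing homotopy is $\pi_0=\GT(\QQ)$ acting on $\pi_1=\QQ$, with each component a $K(\QQ,1)=\DGB(\QQ)$, assembling these data exhibits the grouplike monoid $\Aut_{\Op}^h(\EOp_2^{\QQ})$ as the semidirect product $\GT(\QQ)\ltimes\DGB(\QQ)$, which is the asserted weak-equivalence. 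The conceptual steps (universality and the centre computation) are clean; the technical weight lies in the comparison of the first paragraph, together with the verification that the rationalization is correctly encoded by the groupoid-level completion.
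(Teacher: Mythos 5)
Your proposal is essentially correct, but it follows a genuinely different route from the one the paper cites. The paper attributes the result to \cite[\S\S III.1--5]{FresseBook}, where the computation is carried out by obstruction-theoretic and spectral sequence methods: one builds an operadic cohomology theory approximating the homotopy automorphism space of a cofibrant-fibrant replacement of $\EOp_2^{\QQ}$, shows that every homotopy automorphism is homotopic to a lift of some $\phi_{\gamma}\sphat$ with $\gamma\in\GT(\QQ)$, and identifies the remaining homotopy with the $\DGB(\QQ)$ factor coming from $(P_2)\sphat_{\QQ}\simeq\QQ$. Your strategy --- reduce to $\Aut_{\Grd\Op}^h(\PaB\sphat_{\QQ})$ using asphericity, then read off $\pi_0=\GT(\QQ)$ from the universality of Theorem~\ref{thm:e2-operads:parenthesized-braid-universality} and $\pi_1=\QQ$ from the operadic centre (the full twist) --- is instead the strategy that Horel implements in the profinite setting (Theorem~\ref{thm:e2-operads:profinire-homotopy-automorphisms}): it requires knowing that $\PaB\sphat_{\QQ}$ is cofibrant in a suitable model structure on operads in groupoids and that the classifying space functor transports derived automorphism spaces faithfully between groupoid operads and arity-wise aspherical topological operads, which is exactly where the higher-categorical technicalities you flag as the ``main obstacle'' live; you are right that this comparison, together with the identification of the arity-wise Malcev completion with the operadic Sullivan rationalization, carries the real weight. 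The trade-off is instructive: your groupoid route is conceptually transparent and computes the full $1$-type in one stroke, but it is confined to $n=2$ (it exploits that the $\DOp_2(r)$ are $K(\pi,1)$'s), whereas the paper's operadic cohomology approach is the one that generalizes to $\Aut_{\Op}^h(\DOp_n^{\QQ})$ for all $n\geq 2$ in \S\ref{sec:mapping-spaces-homotopy}. Two small points to tighten: the injectivity check in your $\pi_0$ step does go through (an operadic natural isomorphism from the identity to an object-fixing automorphism has central components in each $(P_r)\sphat_{\QQ}$ and therefore fixes $\tau$ and $\alpha$), and the compatibility of the family of full twists with the arity-zero composition operations is what pins the operadic centre down to a single parameter $\nu\in\QQ$; both deserve to be written out.
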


\begin{proof}[Explanations]
The factor $\DGB(\QQ)$ in the expression of this theorem corresponds to a rationalization of the group of rotations $\SO(2)\sim\DGB(\ZZ)$
which acts on the little $2$-discs model of the class of $E_2$-operads $\EOp_2 = \DOp_2$
by rotating the configurations of little $2$-discs
in this operad $\DOp_2$.
We equip this factor $\DGB(\QQ)$ with the obvious additive group structure.

To define an action of the Grothendieck--Teichm\"uller group on this group $\DGB(\QQ)$
we use that any isomorphism $\phi_{\gamma}\sphat: \PaB\sphat_{\QQ}\rightarrow\PaB\sphat_{\QQ}$
determines an automorphism of the Malcev completion of the pure braid group $(P_2)\sphat_{\QQ}\simeq\QQ$
when we consider the action of this isomorphism $\phi_{\gamma}\sphat$ on the automorphism group of the object $\mu\in\Ob\PaB(2)$
and we use the relation $\Mor_{\PaB(2)\sphat_{\QQ}}(\mu,\mu) = (P_2)\sphat_{\QQ}$ (we refer to~\cite[\S III.5.2]{FresseBook} for details).

Let us insist that we consider derived homotopy automorphism spaces in the statement of this theorem.
In the model category approach, these homotopy automorphism spaces are defined by taking the actual spaces of homotopy automorphism spaces
associated to a cofibrant-fibrant replacement $\ROp_2^{\QQ}$
of our operad $\EOp_2^{\QQ}$.
To associate an element of this derived homotopy automorphism space to an element of the Grothendieck--Teichm\"uller group $\gamma\in\GT(\QQ)$
we use that an automorphism $\phi_{\gamma}\sphat: \EOp_2^{\QQ}\rightarrow\EOp_2^{\QQ}$
automatically admits a lifting to this cofibrant-fibrant replacement $\ROp_2^{\QQ}$.
The claim is that all homotopy automorphisms of this cofibrant-fibrant model $\ROp_2^{\QQ}$
are homotopic to such morphisms, and that this correspondence gives
all the homotopy of the space $\Aut_{\Op}^h(\EOp_2^{\QQ})$
up to the factor $\DGB(\QQ)$.

The book \cite[\S\S III.1-5]{FresseBook} gives a proof of this result by using spectral sequence methods
and an operadic cohomology theory which provides approximations of our homotopy automorphism spaces.
This method is close to the methods which are used in the next sections, when we study the homotopy automorphism spaces of $E_n$-operads
for any value of the dimension parameter $n\geq 2$.
\end{proof}

We get the same result as in Theorem~\ref{thm:e2-operads:profinire-homotopy-automorphisms}
in the profinite setting:

\begin{thm}[{G. Horel \cite{Horel}}]\label{thm:e2-operads:profinire-homotopy-automorphisms}
We have $\Aut_{\Op}^h(\DOp_2\sphat)\sim\GT\sphat\ltimes\DGB(\ZZ\sphat)$.
\end{thm}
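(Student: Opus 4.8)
The plan is to transpose the proof of Theorem~\ref{thm:e2-operads:rational-homotopy-automorphisms} from the rational to the profinite setting, replacing the Malcev completion everywhere by the profinite completion and the Sullivan rational homotopy theory by a profinite homotopy theory of spaces (for instance the model structure on simplicial profinite sets). The geometric starting point is the same groupoid model: by Theorem~\ref{thm:e2-operads:classifying-space-operad} and Theorem~\ref{thm:e2-operads:fundamental-groupoid} we have $\DOp_2\sim\DGB(\CoB)\sim\DGB(\PaB)$, and each space $\DOp_2(r)$ is an Eilenberg--MacLane space $K(P_r,1)$. First I would establish that the profinite completion $\DOp_2\sphat$ admits the classifying space operad $\DGB(\PaB\sphat)$ as a model. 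This rests on the fact that the pure braid groups $P_r$ are \emph{good} in the sense of Serre, so that the profinite completion of the aspherical space $\DOp_2(r)=K(P_r,1)$ is again a $K((P_r)\sphat,1)$ in profinite spaces, together with the compatibility of the profinite completion with the classifying space functor $\DGB$ at the level of the whole operad structure.

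Second, I would reduce the computation of the derived homotopy automorphism space $\Aut_{\Op}^h(\DOp_2\sphat)$ to a mapping-space computation at the level of operads in profinite groupoids. Since the components are profinite Eilenberg--MacLane objects, the classifying space functor $\DGB$ is homotopically fully faithful on the relevant aspherical inputs, so that self-equivalences of the profinite topological operad $\DGB(\PaB\sphat)$ are detected by self-equivalences of the operad in profinite groupoids $\PaB\sphat$. This step is where one transports the whole framework of Fresse's operadic obstruction theory---the Bousfield--Kan type spectral sequence built on the operadic cohomology that approximates the homotopy automorphism space---to the profinite world.

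Third, I would identify the self-equivalences of $\PaB\sphat$ using the universal property of the parenthesized braid operad (Theorem~\ref{thm:e2-operads:parenthesized-braid-universality}). Exactly as in~\S\ref{e2-operads:grothendieck-teichmuller-group}, an operad automorphism fixing the objects is determined by the images of the generating braiding $\tau$ and associator $\alpha$, hence by a pair $(\lambda,f)\in\ZZ\sphat\times\langle x_{12},x_{23}\rangle\sphat$ subject to the profinite versions of the unit, hexagon and pentagon constraints; the invertible such pairs are precisely the elements of $\GT\sphat$. The extra factor $\DGB(\ZZ\sphat)$ arises, just as in Theorem~\ref{thm:e2-operads:rational-homotopy-automorphisms}, from the profinite completion of the circle of rotations $\SO(2)\sim\DGB(\ZZ)$ acting on the configurations of little $2$-discs, and the semidirect product structure encodes the action of $\GT\sphat$ on this rotation factor through the induced automorphism of $\Mor_{\PaB(2)\sphat}(\mu,\mu)=(P_2)\sphat\cong\ZZ\sphat$.

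The main obstacle I expect lies in the profinite homotopy theory input of the second step: one must ensure that the derived mapping space of profinite operads is genuinely computed by the profinite groupoid model, i.e. that the profinite completion commutes with the homotopy limits defining $\Aut_{\Op}^h$ and that the obstruction spectral sequence degenerates as in the rational case, with no spurious higher homotopy. This is exactly where the goodness of the braid groups and the asphericity of the little $2$-discs operad must be leveraged, and it is the technical heart of Horel's argument, which substitutes an \'etale/profinite descent computation for Fresse's Sullivan-model computations.
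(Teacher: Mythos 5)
Your proposal is correct in outline and shares the same skeleton as Horel's argument as the paper summarizes it: reduce to the groupoid model $\PaB\sphat$, identify its object-fixing self-equivalences with $\GT\sphat$ via the universal property of Theorem~\ref{thm:e2-operads:parenthesized-braid-universality}, and account for the extra $\DGB(\ZZ\sphat)$ factor coming from the rotation action with $\Mor_{\PaB(2)\sphat}(\mu,\mu)\cong(P_2)\sphat\cong\ZZ\sphat$. The ingredients you flag for the first step (goodness of the pure braid groups, asphericity of the spaces $\DOp_2(r)$, compatibility of $\DGB$ with profinite completion) are indeed the ones needed to match the groupoid-level completion $\DGB(\PaB\sphat)$ with a genuine profinite completion of the operad in (Quick's) profinite spaces.

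Where you diverge from the paper's account of Horel's proof is in the mechanism for the crucial middle step. You propose to transport Fresse's operadic obstruction theory --- the Bousfield--Kan-type spectral sequence built on operadic cohomology that underlies the rational computation of Theorem~\ref{thm:e2-operads:rational-homotopy-automorphisms} --- to the profinite world. Horel's route, as described in the paper, avoids that machinery entirely: the key observation is that $\PaB\sphat$ is a \emph{cofibrant} object for a suitable model structure on operads in (profinite) groupoids, so that the derived homotopy automorphism space can be computed by a direct model-categorical (and higher-categorical) transport to the category of operads in groupoids, where it becomes an honest automorphism computation. Your route would require setting up the operadic cohomology approximations and proving degeneration of the spectral sequence in the profinite setting, which is substantially more work and is not how the cited proof proceeds; the cofibrancy observation is what makes the profinite argument shorter than the rational one rather than a literal transposition of it. Your closing remark about an ``\'etale/profinite descent computation'' is closer to the actual strategy than your second step, and the two descriptions you give of the technical heart are not really consistent with each other --- you should commit to the groupoid-level cofibrancy argument rather than to the spectral-sequence transport.
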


\begin{proof}[Explanations]
The article \cite{Horel} gives a proof of the result of this theorem
by using the correspondence with groupoids.
In short, the idea of this paper is to observe that the operad $\PaB\sphat$
represents a cofibrant object with respect to some model structure
on the category of operads in groupoids.
Then we can use model category arguments (combined with higher category methods) to prove that we can transport the computation
of the homotopy automorphism space $\Aut_{\Op}^h(\EOp_2\sphat)$
to the computation of the homotopy automorphism space
associated to this object $\PaB$
in the category of operads in groupoids.
\end{proof}

\subsubsection{The Drinfeld--Kohno Lie algebra operad}\label{e2-operads:drinfeld-kohno-lie-algebra-operads}
Besides the colored braid and the parenthesized braid operads, which are defined by using the structures
of the braid groups, we consider operads in Lie algebras
which are associated to infinitesimal versions of the pure braid groups.
To be explicit, in these infinitesimal versions, we consider the Drinfeld--Kohno Lie algebras (also called
the Lie algebras of infinitesimal braids) which are defined by a presentation of the form:
\begin{equation}\label{eqn:e2-operads:drinfeld-kohno-lie-algebra-operads:presentation}
\palg(r) = \LLie(t_{ij},\{i,j\}\subset\{1,\dots,r\})/<[t_{ij},t_{kl}],[t_{ij},t_{ik}+t_{jk}]>,
\end{equation}
for $r\in\NN$, where $\LLie(-)$ denotes the free Lie algebra functor, we associate a generator $t_{ij}$ such that $t_{ij} = t_{ji}$
to each pair $\{i\not=j\}\subset\{1,\dots,r\}$,
and we take the ideal generated by the commutation relations
\begin{equation}\label{eqn:e2-operads:drinfeld-kohno-lie-algebra-operads:commutation-relation}
[t_{ij},t_{kl}]\equiv 0,
\end{equation}
for all quadruples $\{i,j,k,l\}\subset\{1,\dots,r\}$ such that $\sharp\{i,j,k,l\} = 4$,
together with the Yang-Baxter relations
\begin{equation}\label{eqn:e2-operads:drinfeld-kohno-lie-algebra-operads:yang-baxter-relation}
[t_{ij},t_{ik}+t_{jk}]\equiv 0,
\end{equation}
for all triples $\{i,j,k\}\subset\{1,\dots,r\}$ such that $\sharp\{i,j,k\} = 3$.
This definition makes sense over any ground ring $\kk$, but from the next paragraph on, we will assume that the ground ring
is a field of characteristic zero.

Note that this Lie algebra $\palg(r)$ inherits a weight grading from the free Lie algebra since this ideal is generated by homogeneous relations.
To be more explicit, if we use the notation $L_m = \LLie_m(-)$, for the homogeneous component of weight $m$
of the free Lie algebra $L = \LLie(t_{ij},\{i,j\}\subset\{1,\dots,r\})$,
then we have the decomposition $\palg(r) = \bigoplus_{m\geq 1}\palg(r)_m$,
where we set $\palg(r)_m = L_m/L_m\cap<[t_{ij},t_{kl}],[t_{ij},t_{ik}+t_{jk}]>$,
for $m\geq 1$.
In fact, we have the identity $\palg(r)_* = \gr_*^{\Gamma} P_r$, where on the right-hand side
we consider the graded Lie algebra of the sub-quotients of the central series
filtration of the pure braid group $\gr_*^{\Gamma} P_r$ (see~\cite[Theorem I.10.0.4]{FresseBook} for a detailed proof of this statement).

The collection $\palg = \{\palg(r),r\in\NN\}$ inherits the structure of an operad in the category of Lie algebras,
where we take the direct sum of Lie algebras to define our symmetric monoidal structure.
The action of the symmetric group $\Sigma_r$ on the Lie algebra $\palg(r)$ is defined, on generators,
by the obvious re-indexing operation $\sigma\cdot t_{ij} = t_{\sigma(i)\sigma(j)}$,
for all $\sigma\in\Sigma_r$.
The composition products are given by Lie algebra morphisms of the form
\begin{equation}\label{eqn:e2-operads:drinfeld-kohno-lie-algebra-operads:composition-operations}
\circ_i: \palg(k)\oplus\palg(l)\rightarrow\palg(k+l-1),
\end{equation}
defined for all $k,l\in\NN$ and $i\in\{1,dots,k\}$,
and which satisfy the equivariance, unit and associativity relations of operads
in the category of Lie algebras.
For generators $t_{a b}\in\palg(k)$ and $t_{c d}\in\palg(l)$,
we explicitly set:
\begin{align}\label{eqn:e2-operads:drinfeld-kohno-lie-algebra-operads:left-composition-definition}
t_{a b}\circ_i 0 & = \begin{cases} t_{a+l-1 b+l-1},\quad\text{if $i<a<b$}, \\
t_{a b+l-1}+\cdots+t_{a+l-1 b+l-1},\quad\text{if $i=a<b$}, \\
t_{a b+l-1},\quad\text{if $a<i<b$}, \\
t_{a b}+\cdots+t_{a j+l-1},\quad\text{if $a<i=b$}, \\
t_{a b},\quad\text{if $a<b<i$},
\end{cases}
\intertext{and}\label{eqn:e2-operads:drinfeld-kohno-lie-algebra-operads:right-composition-definition}
0\circ_i t_{c d} & = t_{c+i-1 d+i-1}\quad\text{for all $i$}.
\end{align}
The operadic unit is just given by the zero morphism $0: 0\rightarrow\palg(1)$ with values the zero object $\palg(1) = 0$.

In fact, these operations reflect the composition structure of the operad of colored braids,
in the sense that we can identify the components of homogeneous weight
of this operad $\palg(-)_m$, $m\geq 1$,
with the fibers of a tower of operads $\CoB/\DGGamma_m\CoB$, $m\geq 1$, which we deduce from the central series filtration
of the pure braid group (we refer to~\cite[\S I.10.1]{FresseBook} for more explanations on this correspondence).

We call this operad $\palg$ the Drinfeld--Kohno Lie algebra operad. We consider generalizations of this operad
when we study the Sullivan model of $E_n$-operads. This subsequent study is our main motivation
for the recollections of this paragraph, but the Drinfeld--Kohno Lie algebra operad also occurs in the theory of Drinfeld's associators
and in the definition of a graded version of the Grothendieck--Teichm\"uller group.
We just give a brief overview of this subject to complete the account of this section.

\subsubsection{The operad of chord diagrams and associators}\label{e2-operads:chord-diagram-operad}
To define the set of Drinfeld's associators, we consider an operad in groupoids, the chord diagram operad $\CD\sphat_{\kk}$,
defined over any characteristic zero field $\kk$,
and such that we have the relation $\CD(r)\sphat_{\kk} = \exp\hat{\palg}(r)$ for each $r\in\NN$,
where we consider the exponential group associated to a completion
of the Drinfeld Kohno Lie algebra $\hat{\palg}(r)$.

To be more precise, we explained in the previous paragraph that the Drinfeld Kohno Lie algebra
admits a weight decomposition $\palg(r) = \bigoplus_{m\geq 1}\palg(r)_m$
such that $\palg(r)_m = L_m/L_m\cap<[t_{ij},t_{kl}],[t_{ij},t_{ik}+t_{jk}]>$,
where $L_m = \LLie_m(-)$ denotes the homogeneous component of weight $m$ of the free Lie algebra $L = \LLie(-)$.
To form the completed Lie algebra $\hat{\palg}(r)$, we just replace the direct sum of this decomposition
by a product.
Thus we have $\hat{\palg}(r) = \prod_{m\geq 1}\palg(r)_m$ so that the elements of this completed Lie algebra $\hat{\palg}(r)$
are represented by infinite series of Lie polynomials (modulo the ideal
generated by the commutation and the Yang-Baxter relations).
The exponential group $\CD(r)\sphat_{\kk} = \exp\hat{\palg}(r)$ consists of formal exponential elements $e^{\xi}$, $\xi\in\hat{\palg}(r)$,
together with the group operation given by the Campbell-Hausdorff formula
at the level of the completed Lie algebra $\hat{\palg}(r)$.
Note that we identify this group $\CD(r)\sphat_{\kk} = \exp\hat{\palg}(r)$ with a groupoid with a single object
when we regard the chord diagram operad $\CD\sphat_{\kk} = \{\exp\hat{\palg}(r),r\in\NN\}$
as an operad in the category of groupoids.
The structure operations of this operad $\CD\sphat_{\kk}$ are induced by the structure operations
of the Drinfeld--Kohno Lie algebra operad on $\hat{\palg}$.

Let us mention that we also have an identity $\CD(r)\sphat = \GLike(\hat{\UFree}(\hat{\palg}(r)))$,
where we consider the set of group-like elements $\GLike(-)$ in the complete enveloping algebra
of the Drinfeld--Kohno Lie algebra $\hat{\UFree}(\hat{\palg}(r))$.
Indeed, the group-like elements are identified with actual exponential series
of Lie algebra elements $\xi\in\palg(r)$
within the complete enveloping algebras $\hat{\UFree}(\hat{\palg}(r))$.
The name ``chord diagram'' comes the theory of Vassiliev invariants, where a monomial $t_{i_1 j_1}\cdots t_{i_m j_m}\in\hat{\UFree}(\hat{\palg}(r))$
is associated to a diagram with $r$ vertical strands numbered from $1$ to $r$, and $l$ chords corresponding to the factors $t_{i_k j_k}$,
as in the following picture:
\begin{equation}\label{eqn:e2-operads:chord-diagram-operad:picture}
t_{1 2} t_{1 2} t_{3 6} t_{2 4} =
\vcenter{\xymatrix@W=0mm@H=0mm@R=1.33mm@C=2.66mm@M=0mm{ *+<2pt>{\scriptstyle{1}}\ar@{-}[ddddd] & *+<2pt>{\scriptstyle{2}}\ar@{-}[ddddd] &
*+<2pt>{\scriptstyle{3}}\ar@{-}[ddddd] & *+<2pt>{\scriptstyle{4}}\ar@{-}[ddddd] &
*+<2pt>{\scriptstyle{5}}\ar@{-}[ddddd] & *+<2pt>{\scriptstyle{6}}\ar@{-}[ddddd] \\
& *{\scriptscriptstyle{\bullet}}\ar@{-}[rr] && *{\scriptscriptstyle{\bullet}} && \\
&& *{\scriptscriptstyle{\bullet}}\ar@{-}[rrr] &&& *{\scriptscriptstyle{\bullet}} \\
*{\scriptscriptstyle{\bullet}}\ar@{-}[r] & *{\scriptscriptstyle{\bullet}} &&&& \\
*{\scriptscriptstyle{\bullet}}\ar@{-}[r] & *{\scriptscriptstyle{\bullet}} &&&& \\
&&&&& }}.
\end{equation}
The composition products of the chord diagram operad have a simple description
in terms of chord diagram insertions too.

In the previous paragraph, we explained that the components of homogeneous weight of the Drinfeld--Kohno Lie algebra operad
represent the fibers of a tower decomposition
of the parenthesized braid operad (and of the colored braid operad equivalently).
In fact, a stronger result holds when we work over a field of characteristic zero.
To be more explicit, we consider the Malcev completion of the operad $\PaB$,
and a natural extension of this construction for ground fields
such that $\QQ\subset\kk$.
Then we may wonder about the existence of equivalences of operads in groupoids $\phi_{\alpha}\sphat: \PaB\sphat_{\kk}\xrightarrow{\sim}\CD\sphat_{\kk}$,
which would be equivalent to a splitting of this tower decomposition over $\kk$.
By Theorem~\ref{thm:e2-operads:parenthesized-braid-universality},
the morphism of operad in groupoids $\phi_a: \PaB\xrightarrow{\sim}\CD\sphat_{\kk}$
which would induce such an equivalence on the completion
is determined by the choice of a braiding $c\in\exp\hat{\palg}(2)$
and of an associativity isomorphism $a\in\exp\hat{\palg}(3)$.
The braiding has the form $c = \exp(\kappa t_{12}/2)$, for some parameter $\kappa\in\kk^{\times}$, since $\hat{\palg}(2) = \kk t_{12}$,
and one can prove that the associator is necessarily of the form $a = \exp f(t_{12},t_{23})$,
for some Lie power series $f(t_{12},t_{23})\in\hat{\LLie}(t_{12},t_{23})$.
Thus, the existence of an equivalence of operads in groupoids $\phi_{\alpha}\sphat: \PaB\sphat_{\kk}\xrightarrow{\sim}\CD\sphat_{\kk}$
reduces to the existence of such a Lie power series $f(t_{12},t_{23})\in\hat{\LLie}(t_{12},t_{23})$
such that $a = \exp f(t_{12},t_{23})$ satisfies the unit, pentagon and hexagon constraints
of Theorem~\ref{thm:e2-operads:parenthesized-braid-universality},
for some given parameter $\kappa\in\kk^{\times}$.

The set of Drinfeld's associators precisely refers to this particular set of associators $a = \exp f(t_{12},t_{23})$
which we associate to the chord diagram operad $\CD\sphat_{\kk}$.
This notion was introduced by Drinfeld in the paper~\cite{Drinfeld}, to which we also refer for an explicit
expression of the unit, pentagon and hexagon constraints (see also the survey of~\cite[\S I.10.2]{FresseBook}).
Let us mention that further reductions occur in the pentagon and hexagon constraints in the definition
of Drinfeld's associators.
Indeed, by a result of Furusho (see~\cite{FurushoEquations}),
the hexagon constraints can be satisfied as soon as we have a power series that fulfills
the unit and the pentagon constraints.

We have the following main result:

\begin{thm}[{V.I. Drinfeld~\cite{Drinfeld}}]\label{thm:e2-operads:drinfeld-associator-existence}
The set of Drinfeld's associators is not empty, for any choice of field of characteristic zero as ground field $\kk$, including $\kk = \QQ$,
so that we do have an operad morphism $\phi_{\alpha}: \PaB\rightarrow\CD\sphat_{\QQ}$
which induces an equivalence when we pass to the Malcev completion $\phi_{\alpha}\sphat: \PaB\sphat_{\QQ}\xrightarrow{\sim}\CD\sphat_{\QQ}$.
\end{thm}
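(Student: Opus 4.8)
The plan is to produce an associator in two stages: first to construct one analytically over $\CC$ by means of the Knizhnik--Zamolodchikov connection, and then to descend to $\QQ$ --- and thereby to every field of characteristic zero --- by exploiting the fact that the solutions of the associator equations form a torsor under a pro-unipotent group defined over $\QQ$. By Theorem~\ref{thm:e2-operads:parenthesized-braid-universality} it is enough to exhibit, for some $\kappa\in\kk^{\times}$, a grouplike element $a = \exp f(t_{12},t_{23})$ with $f\in\hat{\LLie}(t_{12},t_{23})$ satisfying the unit, pentagon and hexagon constraints; and by the result of Furusho recalled above (see~\cite{FurushoEquations}) I may concentrate on the unit and pentagon constraints, the hexagon relations then being automatically solvable.

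For the complex construction I would work on the configuration spaces $\FOp(\CC,r)$ with the Knizhnik--Zamolodchikov connection
\begin{equation*}
\omega = \frac{1}{2\pi i}\sum_{1\leq i<j\leq r} t_{ij}\, d\log(z_i - z_j)
\end{equation*}
with values in the complete enveloping algebra $\hat{\UFree}(\hat{\palg}(r))$. Since the logarithmic forms are closed, the zero-curvature condition for the connection $d - \omega$ reduces to the quadratic Arnold relations among the $d\log(z_i-z_j)$, which hold precisely because of the commutation and Yang--Baxter relations defining $\palg(r)$ in~\S\ref{e2-operads:drinfeld-kohno-lie-algebra-operads}; hence $\omega$ is flat. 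The associator $a = \Phi_{\mathrm{KZ}}$ is then defined as the regularized comparison, on $\FOp(\CC,3)$ modulo affine transformations (identified with $\CC\setminus\{0,1\}$), of the two canonical flat sections singled out by their prescribed asymptotics at the punctures $0$ and $1$. The unit constraint follows from this normalization, the pentagon relation from the flatness of the corresponding connection on the five-point space $\FOp(\CC,5)$ modulo affine transformations (the relation being exactly the path-independence of parallel transport around the MacLane pentagon on the moduli space $M_{0,5}$), and the hexagon relations from the three-point monodromy around the braiding. This yields an associator over $\CC$ for the value of $\kappa$ attached to the normalization $2\pi i$ of $\omega$.

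It remains to descend to $\QQ$. Fixing the value of $\kappa$, the functor $\kk\mapsto\{\text{associators over }\kk\}$ is representable, through Theorem~\ref{thm:e2-operads:parenthesized-braid-universality}, by a pro-affine scheme $\mathrm{Ass}$ defined over $\QQ$, because the unit, pentagon and hexagon constraints~(\ref{eqn:e2-operads:grothendieck-teichmuller-group:unit-constraint}--\ref{eqn:e2-operads:grothendieck-teichmuller-group:pentagon-constraint}) have rational coefficients; moreover $\mathrm{Ass}$ carries a free and transitive action of the graded, pro-unipotent Grothendieck--Teichm\"uller group $\GRT_1$, whose pro-nilpotent Lie algebra $\grt$ is filtered by the weight grading inherited from $\hat{\palg}$, so that $\mathrm{Ass}$ is a $\GRT_1$-torsor over $\QQ$. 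The complex construction shows that $\mathrm{Ass}$ is nonempty, hence a genuine torsor; and since $\GRT_1 = \varprojlim_n U_n$ is a limit of unipotent $\QQ$-groups, with $H^1(\QQ,U_n) = 0$ in characteristic zero, one obtains $H^1(\QQ,\GRT_1) = 0$, so the torsor is trivial and $\mathrm{Ass}(\QQ)\neq\varnothing$. Concretely, this descent is carried out weight by weight along the filtration of $\hat{\palg}$: the extension of a solution from weight $m$ to weight $m+1$ is governed by an affine-linear system over $\QQ$, and the torsor structure guarantees that its solvability is the field-independent condition already satisfied over $\CC$ thanks to $\Phi_{\mathrm{KZ}}$. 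Base change then produces an associator over every field $\kk$ with $\QQ\subset\kk$, and the operad morphism $\phi_{\alpha}\colon\PaB\to\CD\sphat_{\QQ}$ determined by such an associator is compatible with the weight towers and induces the identity on the associated graded operad $\hat{\palg}$ (by the correspondence of~\S\ref{e2-operads:chord-diagram-operad}); being a filtered morphism that is an isomorphism on associated graded objects, it becomes the asserted equivalence $\phi_{\alpha}\sphat\colon\PaB\sphat_{\QQ}\xrightarrow{\sim}\CD\sphat_{\QQ}$ after Malcev completion.

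The crux of the argument is twofold. On the analytic side, the genuine difficulty is to verify that $\Phi_{\mathrm{KZ}}$ satisfies the pentagon and hexagon constraints: this rests on the careful regularization of the divergent holonomies of $\omega$ along the diagonals and on the identification of the pentagon with the coherence of flat transport on $M_{0,5}$, which is where the substance of the $\CC$-existence lies. On the arithmetic side, the delicate point is \emph{rationality} rather than existence: everything hinges on the facts that the defining equations are defined over $\QQ$ and that the weight-by-weight obstruction is affine-linear in the top-weight unknown --- equivalently, that the solution space is a torsor under the pro-unipotent $\QQ$-group $\GRT_1$, for which $H^1$ vanishes. I expect the most substantial structural input to be the proof that $\GRT_1$ acts freely and transitively, so that $\mathrm{Ass}$ is genuinely a $\GRT_1$-torsor and the cohomological descent applies.
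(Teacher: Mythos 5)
Your proposal follows essentially the same route as the paper's own (sketched) argument: construct the Knizhnik--Zamolodchikov associator over $\CC$ from the monodromy of the flat KZ connection, then descend to $\QQ$ by a rationality/descent argument (the paper cites Drinfeld's and Bar-Natan's proofs of this descent, of which your weight-by-weight affine-linear induction and the equivalent $\GRT_1$-torsor formulation are the standard forms). The added details — Furusho's reduction to the pentagon, the identification of the pentagon with flat transport on $M_{0,5}$, and the associated-graded argument for the final equivalence after Malcev completion — are all consistent with the paper's account.
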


\begin{proof}[Explanations and references]
In~\cite{Drinfeld}, Drinfeld gives an explicit construction of a complex associator by using the monodromy of the Knizhnik--Zamolodchikov connection.
This associator, which is usually called the Knizhnik--Zamolodchikov associator in the literature,
can also be identified with a generating series of polyzeta values.
Descent arguments can be used to establish the existence of a rational associator from the existence
of this complex associator (see again~\cite{Drinfeld}
and~\cite{BarNatan} for different proofs of this descent statement),
so that the result of this theorem holds over $\kk = \QQ$, and not only over $\kk = \CC$.

Let us mention that another explicit definition of an associator, defined over the reals,
is given by Alekseev--Torossian in~\cite{AlekseevTorossian},
by using constructions introduced by Kontsevich
in his proof of the formality of the operads of little discs (see~\cite{KontsevichMotives}).
\end{proof}

\subsubsection{The operad of parenthesized chord diagrams, the graded Grothendieck--Teich\-m\"uller group, and other related objects}\label{e2-operads:graded-grothendieck-teichmuller-group}
The existence of associators can be used to get insights into the structure the rational Grothendieck--Teichm\"uller group $\GT(\QQ)$.
Indeed, the definition implies that the set of associators inherits a free and transitive action of the rational Grothendieck--Teichm\"uller group.
To go further into the applications of associators, one introduces a parenthesized version of the chord diagram operad $\PaCD\sphat_{\QQ}$
(by using the same pullback construction as in the case of the parenthesized braid operad $\PaB$)
and a group of automorphisms, denoted by $\GRT(\QQ)$, which we associate to this object $\PaCD\sphat_{\QQ}$.
Then one can easily check that every equivalence of operads in groupoids $\phi\sphat_a: \PaB\sphat_{\QQ}\xrightarrow{\sim}\CD\sphat_{\QQ}$
lifts to an isomorphism $\phi\sphat_a: \PaB\sphat_{\QQ}\xrightarrow{\simeq}\PaCD\sphat_{\QQ}$
so that the existence of rational associators implies the existence of a group isomorphism $\GT(\QQ)\simeq\GRT(\QQ)$
by passing to automorphism groups.

This group $\GRT(\QQ)$ is usually called the graded Grothendieck--Teichm\"uller group in the literature,
because this group is identified with the pro-algebraic group
associated to a Lie algebra $\grt$, the graded Grothendieck--Teichm\"uller Lie algebra,
which is equipped with a weight decomposition $\grt = \bigoplus_{m\geq 0}\grt_m$
such that $\grt_m\simeq\DGF_m\GT(\QQ)/\DGF_{m+1}\GT$, for all $m\geq 1$,
for some natural filtration of the Grothendieck--Teichm\"uller group $\GT(\QQ) = \DGF_0\GT(\QQ)\supset\DGF_1\GT(\QQ)\supset\cdots\supset\DGF_m\GT(\QQ)\supset\cdots$.
We again refer to~\cite{Drinfeld} for a proof of these results (see also the survey of~\cite[\S I.11.4]{FresseBook}).

The groups $\GT(\QQ)$, $\GRT(\QQ)$, and the Lie algebra $\grt$ are also related to other objects
of arithmetic geometry and group theory.
In~\S\ref{e2-operads:grothendieck-teichmuller-group}, we already recalled that, by a result of F. Brown (see~\cite{BrownTate}),
the Grothendieck--Teichm\"uller group $\GT(\QQ)$ contains a realization of the motivic Galois group
of a category of integral mixed Tate motives $\Gal_{\MT(\ZZ)}$.
In fact, one conjectures that these groups are isomorphic (Deligne-Ihara).
Furthermore, one can prove that the Galois group $\Gal_{\MT(\ZZ)}$ reduces to the semi-direct product of the multiplicative group
with the prounipotent completion of a free group on a sequence of generators $s_3,s_5,\dots,s_{2n+1},\dots$ (see~\cite{DeligneGoncharov}).
This result implies that the embedding $\Gal_{\MT(\ZZ)}\hookrightarrow\GT(\QQ)$
is equivalent to an embedding of the form $\kk\oplus\LLie(s_3,s_5,\dots,s_{2n+1},\dots)\hookrightarrow\grt$
when we pass to the category of Lie algebras.
In this context, we can re-express the Deligne-Ihara conjecture as the conjecture that this embedding of Lie algebras
is an isomorphism.

In our explanations on Theorem~\ref{thm:e2-operads:drinfeld-associator-existence},
we also explained that the Knizhnik--Zamolodchikov associator represents the generating series of polyzeta values.
The polyzeta values satisfy certain equations, called the regularized double shuffle relations,
which can be expressed in terms of the Knizhnik--Zamolodchikov associator, and one conjectures that all relations
between polyzetas follows from the double shuffle relations and from the fact that the Knizhnik--Zamolodchikov associator defines
a group-like power series.
By a result of Furusho~\cite{FurushoDouble}, the pentagon condition for associators implies the regularized double shuffle relations.
This result implies that the Grothendieck--Teichm\"uller group embeds in a group
defined by solutions of regularized double shuffle relations with a degeneration condition,
and one conjectures again that this embedding is an isomorphism.

The theory of associators is also used by Alekseev--Torossian in the study of the solutions of the Kashiwara--Vergne conjecture,
a problem about the Campbell-Hausdorff formula motivated by questions
of harmonic analysis.
These authors notably proved in~\cite{AlekseevTorossian} that the set of Drinfeld's associators
embeds into the set of solutions of the Kashiwara--Vergne conjecture.
In particular, one can deduce the existence of such solutions from the existence of associators.
In addition, one can prove that the action of the Grothendieck--Teichm\"uller group
on Drinfeld's associators lifts to the set of solutions
of the Kashiwara--Vergne conjecture.
This action is still transitive so that we get that the Grothendieck--Teichm\"uller group
embeds into a group of automorphisms associated to this set of solutions.
The conjecture is that this group embedding is an isomorphism, yet again.

\section{The rational homotopy of $E_n$-operads and formality theorems}\label{sec:rational-homotopy}
We now tackle the study of the homotopy of $E_n$-operads for a general value of the dimension parameter $n\geq 1$.
We already explained that $E_1$-operads are the operads that are homotopy equivalent to the permutation operad in~\S\ref{background:en-operads}.
Therefore, we put this case apart and we focus on the case $n\geq 2$.
The main goal of this section is to explain the definition of rational models of $E_n$-operads
and a characterization of the class of $E_n$-operads
up to rational homotopy. In basic terms, this class consists of the operads $\ROp$
which are linked to the operad of little $n$-discs $\DOp_n$
by a zigzag of rational homotopy equivalences of operads.

Recall that a map of simply topological spaces is a rational homotopy equivalence $f: X\xrightarrow{\sim_{\QQ}} Y$
if this map induces a bijection on homotopy groups $f_*: \pi_*(X)\otimes_{\ZZ}\QQ\xrightarrow{\simeq}\pi_*(Y)\otimes_{\ZZ}\QQ$.
In what follows, we consider a generalization of this notion in the context for spaces which,
like the underlying spaces of the little $2$-disc operad,
are not necessarily simply connected.
Then we assume that a rational homotopy equivalence also induces an isomorphism
on the Malcev completion
of the fundamental group $f_*: \pi_1(X,x)\sphat_{\QQ}\xrightarrow{\simeq}\pi_1(Y,f(x))\sphat_{\QQ}$.
For spaces $X$ and $Y$, we also write $X\sim_{\QQ} Y$ when $X$ and $Y$
can be connected by a zigzag of rational homotopy equivalences
$X\xleftarrow{\sim_{\QQ}}\cdot\xrightarrow{\sim_{\QQ}}\cdot\ldots\cdot\xrightarrow{\sim_{\QQ}} Y$.

In the context of operads, we just consider morphisms $\phi: \POp\xrightarrow{\sim_{\QQ}}\QOp$
which define a rational homotopy equivalence of spaces arity-wise $\phi: \POp(r)\xrightarrow{\sim_{\QQ}}\QOp(r)$,
and we again write $\POp\sim_{\QQ}\QOp$ when our objects $\POp$ and $\QOp$
can be connected by a zigzag of such rational homotopy equivalences.
Thus, we mainly aim to determine the class of operads such that $\ROp\sim_{\QQ}\DOp_n$.

We develop a rational homotopy theory of operads to address this problem.
We rely on the Sullivan rational homotopy of spaces, which we briefly review in the next paragraph.
We explain the construction of an operadic extension of the Sullivan model afterwards.
We eventually check that the $n$-Poisson cooperad, the dual structure of the $n$-Poisson operad which occurs
in Theorem~\ref{thm:background:little-discs-homology},
defines a Sullivan model of the little $n$-discs operad $\DOp_n$,
and as such determines a model for the class of $E_n$-operads
up to rational homotopy. We need a cofibrant resolution of the $n$-Poisson cooperad
to perform computations with this model.
We will explain that such a cofibrant resolution is given by the Chevalley--Eilenberg cochain complex of a graded version
of the Drinfeld--Kohno Lie algebra operad of the previous section.
We consider another resolution in our construction, namely a cooperad of graphs,
and we also explain the definition
of this object.
We use the latter model in the next section, when we explain
a graph complex description of the rational homotopy type
of mapping spaces of $E_n$-operads.

In order to apply the methods of rational homotopy theory, we take $\kk = \QQ$ as a ground ring for our categories of modules from now on,
and we also consider the cohomology with coefficients in this field $\DGH^*(-) = \DGH^*(-,\QQ)$.
We similarly take $\DGH_*(-) = \DGH_*(-,\QQ)$ for the homology.

\subsubsection{Recollections on the Sullivan rational homotopy theory of spaces}\label{rational-homotopy:sullivan-models}
Let $\dg^*\ComCat$ be the category of commutative algebras in upper non-negatively graded dg-modules
(the category of commutative cochain dg-algebras for short).
Briefly recall that the Sullivan model for the rational homotopy of a space takes values in this category $\dg^*\ComCat$
and is obtained by applying the Sullivan functor of PL differential forms,
a version of the de Rham cochain complex which is defined over $\QQ$ (instead of $\RR$)
and on the category of simplicial sets $\sSet$ (instead of the category of smooth manifolds):
\begin{equation}\label{eqn:rational-homotopy:sullivan-models:PL-form-functor}
\DGOmega^*: \sSet^{op}\rightarrow\dg^*\ComCat.
\end{equation}
In the particular case of a simplex $\Delta^n = \{0\leq x_1\leq\cdots\leq x_n\leq 1\}$, we explicitly have:
\begin{equation}\label{eqn:rational-homotopy:sullivan-models:PL-form-algebras}
\DGOmega^*(\Delta^n) = \QQ[x_1,\dots,x_n,dx_1,\dots,dx_n],
\end{equation}
where $dx_1,\dots,dx_n$ represents the differential of the variables $x_1,\dots,x_n$
in this commutative cochain dg-algebra.

The Sullivan functor $\DGOmega^*: \sSet^{op}\rightarrow\dg^*\ComCat$ has a left adjoint
\begin{equation}\label{eqn:rational-homotopy:sullivan-models:realization-functor}
\DGG_{\bullet}: \dg^*\ComCat\rightarrow\sSet^{op},
\end{equation}
which is given by the formula $\DGG_{\bullet}(A) = \Mor_{\dg^*\ComCat}(A,\DGOmega^*(\Delta^{\bullet}))$,
for any commutative cochain dg-algebra $A\in\dg^*\ComCat$.

In addition, one can prove that this pair of adjoint functors define a Quillen adjunction.
Then we set:
\begin{equation}\label{eqn:rational-homotopy:sullivan-models:derived-realization}
\langle A\rangle := \text{derived functor of $\DGG_{\bullet}(A)$} = \Mor_{\dg^*\ComCat}(R_A,\DGOmega^*(\Delta^{\bullet})),
\end{equation}
where $R_A\xrightarrow{\sim}A$ is any cofibrant resolution of $A$ in $\dg^*\ComCat$.
If $X$ satisfies reasonable finiteness and nilpotence assumptions,
then
\begin{equation}\label{eqn:rational-homotopy:sullivan-models:rationalization}
X^{\QQ} := \langle\DGOmega^*(X)\rangle
\end{equation}
defines a rationalization of the space $X$ in the sense that we have the identities
\begin{equation}\label{eqn:rational-homotopy:sullivan-models:homotopy-group-rationalization}
\pi_*(X^{\QQ}) := \begin{cases} \pi_*(X)\otimes_{\ZZ}\QQ, & \text{for $*\geq 2$}, \\
\pi_1(X)\sphat_{\QQ}, & \text{for $*=1$},
\end{cases}
\end{equation}
where we again use the notation $(-)\sphat_{\QQ}$ for the Malcev completion functor on groups.
Besides, one can prove that the unit of the derived adjunction relation between the functors $\DGG_{\bullet}$ and $\DGOmega^*$
defines a map $\eta: X\rightarrow X^{\QQ}$
which corresponds to the usual rationalization map at the level of these homotopy groups.

\subsubsection{The category of Hopf dg-cooperads}\label{rational-homotopy:Hopf-dg-cooperads}
To extend the Sullivan model to operad, the idea is to consider cooperads in the category of commutative cochain dg-algebras,
where the cooperad is a structure which is dual to an operad
in the sense of the theory of categories.

In general, a cooperad in a symmetric category $\CCat$ consists of a collection of objects $\COp = \{\COp(r),r\in\NN\}$,
together with an action of the symmetric group $\Sigma_r$ on $\COp(r)$, for each $r\in\NN$,
and composition coproducts
\begin{equation}\label{eqn:rational-homotopy:Hopf-dg-cooperads:composition-coproducts}
\circ_i^*: \COp(k+l-1)\rightarrow\COp(k)\otimes\COp(l),
\end{equation}
defined for all $k,l\in\NN$, $i\in\{1,\dots,k\}$, and which satisfy equivariance, unit and coassociativity relations
dual to the equivariance, unit and coassociativity axioms of operads.
To handle difficulties we consider a subcategory of cooperads
such that $\COp(0) = \COp(1) = \unit$
where $\unit$ is the unit object of our base category, and we use the notation $\Op_{*1}^c$ for this category of cooperads.
This restriction enables us to simplify some constructions, because the composition coproducts are automatically conilpotent
when we put the component of arity zero apart and we assume $\COp(1) = \unit$.
In some cases, we consider a category of cooperads $\Op_{*N}^c$ such that we still have $\COp(0) = \unit$,
but where $\COp(1)$ does not necessarily reduces to the unit object.
More care is necessary in this case, and we actually assume an extra conilpotence condition for the composition coproducts
that involve the component of arity one (we refer to~\cite{FresseExtendedRatHomotopy} for the precise expression
of this conilpotence condition).

Let us mention that in~\cite[\S II.12]{FresseBook} we consider a category of $\Lambda$-cooperads
such that $\Lambda\Op_{\varnothing 1}^c\cong\Op_{*1}^c$
rather than this category of cooperads $\Op_{*1}^c$.
The idea is to model the coproducts with factors of arity zero $\COp(0) = \unit$
by using a diagram structure that extends the action of the symmetric groups.
This extra structure is used to perform certain categorical constructions,
but we can neglect these difficulties in this article.

We use the name `Hopf cochain dg-cooperad' for the category of cooperads in the category of commutative cochain dg-algebras $\CCat = \dg^*\ComCat$
and we also adopt the notation $\dg^*\Hopf\Op_{*1}^c$ for this category of cooperads.
We also consider a category of operads in simplicial sets satisfying $\POp(0) = \POp(1) = *$
in order to deal with the restrictions imposed by the definition
of our category of cooperads in our model. We use the notation $\sSet\Op_{*1}$ for this category of operads.
We have the following statement:

\begin{thm}[{B. Fresse~\cite[\S II.10, \S II.12]{FresseBook}}]\hspace*{2mm}\label{rational-homotopy:operads}
\begin{enumerate}
\item\label{item:rational-homotopy:operads:realization-functor}
The functor $\DGG_{\bullet}: \dg^*\ComCat\rightarrow\sSet^{op}$ induces a functor $\DGG_{\bullet}: \dg^*\Hopf\Op_{*1}^c\rightarrow\sSet\Op_{*1}^{op}$
from the category of Hopf cochain dg-cooperads $\dg^*\Hopf\Op_{*1}^c$ to the category of operads in simplicial sets $\sSet\Op_{*1}^{op}$.
For an object $\KOp\in\dg^*\Hopf\Op_{*1}^c$, we explicitly set $\DGG_{\bullet}(\KOp)(r) = \DGG_{\bullet}(\KOp(r))$
and we use that $\DGG_{\bullet}(-)$ is strongly symmetric monoidal
to equip the collection of these simplicial sets $\DGG_{\bullet}(\KOp) = \{\DGG_{\bullet}(\KOp(r)),r\in\NN\}$
with the structure of an operad.
\item\label{item:rational-homotopy:operads:PL-form-functor}
The functor $\DGG_{\bullet}: \dg^*\Hopf\Op_{*1}^c\rightarrow\sSet\Op_{*1}^{op}$
admits a right adjoint $\DGOmega_{\sharp}^*: \sSet\Op_{*1}^{op}\rightarrow\dg^*\Hopf\Op_{*1}^c$
and the pair of functors $(\DGG_{\bullet},\DGOmega_{\sharp}^*)$
defines a Quillen adjunction.
\item\label{item:rational-homotopy:operads:coherence}
For any cofibrant operad $\ROp\in\sSet\Op_{*1}$ such that $\DGH^*(\ROp(r)) = \DGH^*(\ROp(r),\QQ)$ forms a finite dimensional $\QQ$-module
in each arity $r\in\NN$ and in each degree $*\in\NN$,
we have a weak-equivalence $\DGOmega_{\sharp}^*(\ROp)(r)\xrightarrow{\sim}\DGOmega^*(\ROp(r))$
between the component of arity $r$ of the Hopf cochain dg-cooperad $\DGOmega_{\sharp}^*(\ROp)\in\dg^*\Hopf\Op_{*1}^c$
and the image of the space $\ROp(r)$ under the Sullivan functor $\DGOmega^*(-)$,
for any $r\in\NN$.
\end{enumerate}
\end{thm}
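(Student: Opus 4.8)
The basis for assertion~(\ref{item:rational-homotopy:operads:realization-functor}) is that the tensor product of $\dg^*\ComCat$ is the \emph{coproduct} in this category of commutative cochain dg-algebras. Since $\DGG_{\bullet}(A) = \Mor_{\dg^*\ComCat}(A,\DGOmega^*(\Delta^{\bullet}))$ is corepresented, it converts coproducts into products, so that I get a natural isomorphism $\DGG_{\bullet}(A\otimes B)\cong\DGG_{\bullet}(A)\times\DGG_{\bullet}(B)$. Thus $\DGG_{\bullet}$, viewed as a contravariant functor from $(\dg^*\ComCat,\otimes)$ to the cartesian category $(\sSet,\times)$, is strong symmetric monoidal, and a contravariant strong symmetric monoidal functor automatically carries cooperads to operads. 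Concretely, I would apply $\DGG_{\bullet}$ to each composition coproduct $\circ_i^*: \KOp(k+l-1)\rightarrow\KOp(k)\otimes\KOp(l)$ and postcompose with the monoidal isomorphism to obtain a composition product $\circ_i: \DGG_{\bullet}(\KOp(k))\times\DGG_{\bullet}(\KOp(l))\rightarrow\DGG_{\bullet}(\KOp(k+l-1))$; the coassociativity, counit and equivariance relations of $\KOp$ dualize to the operad axioms, and the normalization $\KOp(0) = \KOp(1) = \unit$ produces $\DGG_{\bullet}(\KOp)(0) = \DGG_{\bullet}(\KOp)(1) = *$, which lands the output in $\sSet\Op_{*1}$.

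For assertion~(\ref{item:rational-homotopy:operads:PL-form-functor}), the plan is first to produce $\DGOmega_{\sharp}^*$ by a general existence argument and then to check the Quillen property. Both $\dg^*\Hopf\Op_{*1}^c$ and $\sSet\Op_{*1}$ are presentable (the conilpotence forced by the condition $\KOp(1) = \unit$ is what guarantees presentability on the cooperad side), and the lifted $\DGG_{\bullet}$ preserves the relevant colimits, since these are computed from the underlying collections on which $\DGG_{\bullet}$ agrees with the base Sullivan functor. The adjoint functor theorem then supplies the right adjoint $\DGOmega_{\sharp}^*$. To obtain the Quillen adjunction, I would use that the model structures on $\dg^*\Hopf\Op_{*1}^c$ and on $\sSet\Op_{*1}$ are set up so that weak-equivalences and one of the two classes of distinguished morphisms are detected arity-wise by the forgetful functors; since the Sullivan pair $(\DGG_{\bullet},\DGOmega^*)$ is already a Quillen adjunction on the base categories, the lifted pair inherits the Quillen property arity-wise.

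The heart of the theorem is assertion~(\ref{item:rational-homotopy:operads:coherence}), which compares the abstractly defined right adjoint with the levelwise Sullivan forms. The point that makes this nontrivial is that $\DGOmega^*$ is only \emph{lax} symmetric monoidal, so the collection $r\mapsto\DGOmega^*(\ROp(r))$ does \emph{not} carry an obvious cooperad structure; the object $\DGOmega_{\sharp}^*(\ROp)$ is precisely the universal correction of this defect, and it comes equipped with the natural arity-wise comparison morphism $\DGOmega_{\sharp}^*(\ROp)(r)\rightarrow\DGOmega^*(\ROp(r))$ written in the statement. The essential analytic input is the Künneth property of PL-forms: under the finite-dimensionality assumption on $\DGH^*(\ROp(r))$, the lax structure maps $\DGOmega^*(X)\otimes\DGOmega^*(Y)\rightarrow\DGOmega^*(X\times Y)$ are quasi-isomorphisms, so the only obstruction to $\{\DGOmega^*(\ROp(r))\}_r$ being a genuine cooperad is a coherent homotopy, which $\DGOmega_{\sharp}^*$ resolves without changing the weak-homotopy type in each arity.

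To turn this into a proof that the comparison map is a quasi-isomorphism, I would argue by induction along the cellular structure of the cofibrant operad $\ROp$. A cofibrant object of $\sSet\Op_{*1}$ is built by a transfinite sequence of pushouts of free operads $\FreeOp(M)$ along boundary inclusions, and on a free operad the arity-$r$ component decomposes treewise into products of the generating spaces, where the Künneth quasi-isomorphism applies to each factor; this settles the base case. The hard part will be the inductive step, because $\DGOmega_{\sharp}^*$ is a right adjoint and hence does not commute with the colimits that build $\ROp$. The plan is to filter $\DGOmega_{\sharp}^*(\ROp)$ by the cellular skeleta of $\ROp$ and to show that the associated graded of the comparison map reduces, degreewise, to the free-operad case controlled by Künneth; the cofibrancy hypothesis is exactly what makes this filtration well behaved, while the finiteness hypothesis ensures that every Künneth map occurring at each stage is a genuine quasi-isomorphism, so that the comparison is a weak-equivalence in the colimit.
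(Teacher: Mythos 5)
Your proposal follows essentially the same route as the paper: assertion (1) from the strong symmetric monoidality of $\DGG_{\bullet}$ (which you correctly trace to the tensor product being the coproduct of commutative cochain dg-algebras, so that the corepresented functor turns composition coproducts into composition products), assertion (2) from a general adjoint-existence argument together with arity-wise detection of the weak-equivalences and of one distinguished class of morphisms, and assertion (3) --- which the paper itself only defers to the cited book --- via the natural arity-wise comparison map followed by a cellular induction over the cofibrant operad $\ROp$ and the K\"unneth quasi-isomorphism for PL forms under the finiteness hypothesis. One small correction to your diagnosis of the inductive step: as a right adjoint defined on $\sSet\Op_{*1}^{op}$, the functor $\DGOmega_{\sharp}^*$ \emph{does} carry the colimits that build $\ROp$ to limits of Hopf cochain dg-cooperads; the genuine difficulty is rather that the arity components of an operadic cell attachment are not themselves cell attachments of spaces and that limits of cooperads are not computed arity-wise, which is precisely what your skeletal filtration and the treewise decomposition of free operads have to resolve.
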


\begin{proof}[Explanations]
The first claim of this theorem follows from the observation that the functor $\DGG_{\bullet}(-)$
is strongly symmetric monoidal.
The functor $\DGOmega^*(-)$, on the other hand, is only weakly monoidal.
To be more precise, in the case of this functor, we have a K\"unneth morphism $\nabla: \DGOmega^*(X)\otimes\DGOmega^*(Y)\rightarrow\DGOmega^*(X\times Y)$
which is a quasi-isomorphism but not an isomorphism.
Hence, for an operad in simplicial set $\POp$, we only get that the composition product $\circ_i: \POp(k)\times\POp(l)\rightarrow\POp(k+l-1)$
induces a morphism which fits in a zigzag of morphisms
of commutative cochain dg-algebras
$\DGOmega^*(\POp(k+l-1))\xrightarrow{\circ_i^*}\DGOmega^*(\POp(k)\times\POp(l))\xleftarrow{\sim}\DGOmega^*(\POp(k))\otimes\DGOmega^*(\POp(l))$.
The idea is to use the lifting adjoint theorem to produce the functor of the second claim $\DGOmega_{\sharp}^*: \sSet\Op_{*1}^{op}\rightarrow\dg^*\Hopf\Op_{*1}^c$
and to fix this problem.
Then the crux lies in the verification of the third claim,
for which we refer to the cited reference.
\end{proof}

For an operad in simplicial sets $\POp\in\sSet\Op_{*1}$, we now set $\POp^{\QQ} = \langle\DGR\DGOmega_{\sharp}^*(\POp)\rangle$,
where we use the notation $\DGR\DGOmega_{\sharp}^*(-)$
for the right derived functor of the functor of the previous theorem $\DGOmega_{\sharp}^*: \sSet\Op_{*1}^{op}\rightarrow\dg^*\Hopf\Op_{*1}^c$,
and we again use the notation $\langle-\rangle$ for the left derived functor of the Sullivan realization
on operads $\DGG_{\bullet}: \dg^*\Hopf\Op_{*1}^c\rightarrow\sSet\Op_{*1}^{op}$.
The equivalence $\DGOmega_{\sharp}^*(\POp)(r)\sim\DGOmega^*(\POp(r))$
implies that we have the following result
at the level of this realization:

\begin{thm}[{B. Fresse~\cite[Theorem II.10.2.1 and Theorem II.12.2.1]{FresseBook}}]\label{thm:rational-homotopy:operad-rationalization}
For any operad $\POp\in\sSet\Op_{*1}$ such that $\DGH^*(\POp(r)) = \DGH^*(\POp(r),\QQ)$ forms a finite dimensional $\QQ$-module
in each arity $r\in\NN$ and in each degree $*\in\NN$,
we have $\POp^{\QQ}(r)\sim\POp(r)^{\QQ}$, where we consider the component of arity $r$ of the operad $\POp^{\QQ}$
on the left-hand side and the Sullivan rationalization of the space $\POp(r)$
on the right-hand side.\qed
\end{thm}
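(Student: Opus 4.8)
\emph{Proof strategy.} The plan is to reduce everything to the arity-wise coherence comparison of Theorem~\ref{rational-homotopy:operads}(\ref{item:rational-homotopy:operads:coherence}), which already relates the components of $\DGOmega_{\sharp}^*$ to the Sullivan functor $\DGOmega^*$. First I would fix a cofibrant resolution $\ROp\xrightarrow{\sim}\POp$ in the category $\sSet\Op_{*1}$. Since weak-equivalences of operads in simplicial sets are detected arity-wise, this induces a weak-equivalence $\ROp(r)\xrightarrow{\sim}\POp(r)$ for each $r\in\NN$, so that the finiteness hypothesis on $\DGH^*(\POp(r))$ carries over to $\ROp$. As $\ROp$ is cofibrant in $\sSet\Op_{*1}$, it is fibrant in $\sSet\Op_{*1}^{op}$, and the Quillen adjunction of Theorem~\ref{rational-homotopy:operads}(\ref{item:rational-homotopy:operads:PL-form-functor}) lets me compute the right derived functor as $\DGR\DGOmega_{\sharp}^*(\POp)\simeq\DGOmega_{\sharp}^*(\ROp)$ in the homotopy category of Hopf cochain dg-cooperads. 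Hence $\POp^{\QQ} = \langle\DGOmega_{\sharp}^*(\ROp)\rangle$.

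Next I would unfold the derived operadic realization $\langle-\rangle$. By definition it is computed by choosing a cofibrant resolution $\QOp\xrightarrow{\sim}\DGOmega_{\sharp}^*(\ROp)$ in $\dg^*\Hopf\Op_{*1}^c$ and setting $\langle\DGOmega_{\sharp}^*(\ROp)\rangle = \DGG_{\bullet}(\QOp)$, whose arity-$r$ component is $\DGG_{\bullet}(\QOp(r))$ since $\DGG_{\bullet}$ acts arity-wise. The target $\POp(r)^{\QQ} = \langle\DGOmega^*(\POp(r))\rangle$ is the space-level derived realization of the commutative cochain dg-algebra $\DGOmega^*(\POp(r))$. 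To connect the two I would assemble a zigzag of weak-equivalences of commutative cochain dg-algebras
\begin{equation*}
\QOp(r)\xrightarrow{\sim}\DGOmega_{\sharp}^*(\ROp)(r)\xrightarrow{\sim}\DGOmega^*(\ROp(r))\xleftarrow{\sim}\DGOmega^*(\POp(r)),
\end{equation*}
where the first map is the arity-$r$ component of the cooperad weak-equivalence $\QOp\xrightarrow{\sim}\DGOmega_{\sharp}^*(\ROp)$, the second is the coherence weak-equivalence of Theorem~\ref{rational-homotopy:operads}(\ref{item:rational-homotopy:operads:coherence}) applied to the cofibrant operad $\ROp$ (this is where the finiteness hypothesis, inherited from $\POp$, is used), and the third is obtained by applying the contravariant functor $\DGOmega^*$ to the weak-equivalence $\ROp(r)\xrightarrow{\sim}\POp(r)$.

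The crux, which I expect to be the main obstacle, is to verify that $\DGG_{\bullet}(\QOp(r))$ genuinely computes the space-level derived realization $\langle\QOp(r)\rangle$, equivalently that the operadic cofibrant resolution $\QOp$ is arity-wise cofibrant as an object of $\dg^*\ComCat$. This is a compatibility statement between the model structure on the category of Hopf cochain dg-cooperads and the Sullivan model structure on commutative cochain dg-algebras, and it is precisely the kind of property built into the construction of these model structures in~\cite[\S II.10, \S II.12]{FresseBook}. Granting it, the derived Sullivan realization $\langle-\rangle$ sends the zigzag above to a zigzag of weak-equivalences of simplicial sets, and I would conclude
\begin{equation*}
\POp^{\QQ}(r) = \DGG_{\bullet}(\QOp(r))\sim\langle\DGOmega^*(\POp(r))\rangle = \POp(r)^{\QQ},
\end{equation*}
which is the desired identification.
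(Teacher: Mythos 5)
Your proposal is correct and follows essentially the same route as the paper, which derives the statement directly from the arity-wise coherence equivalence $\DGOmega_{\sharp}^*(\ROp)(r)\sim\DGOmega^*(\ROp(r))$ of Theorem~\ref{rational-homotopy:operads}(\ref{item:rational-homotopy:operads:coherence}) and defers the remaining model-categorical compatibilities to \cite[\S II.10, \S II.12]{FresseBook}. The point you single out as the crux---that the components of a cofibrant Hopf cochain dg-cooperad compute the space-level derived Sullivan realization arity-wise---is indeed exactly the technical content carried by the cited theorems of the book, so your identification of where the real work lies is accurate.
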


For the operad of little $n$-discs $\DOp_n$, we now set $\DGR\DGOmega^*_{\sharp}(\DOp_n) = \DGOmega_{\sharp}^*(\EOp_n)$,
where $\EOp_n$ is any cofibrant model of $E_n$-operad in simplicial sets such that $\EOp_n(0) = \EOp_n(1) = *$,
and we still write $\DOp_n^{\QQ} = \langle\DGR\DGOmega^*_{\sharp}(\DOp_n)\rangle$.
To apply the rational homotopy theory to the class of $E_n$-operads, we aim to determine the model of these objects $\DGR\DGOmega_{\sharp}^*(\DOp_n)$.

Recall that we have a homotopy equivalence $\DOp_n(r)\sim\FOp(\RR^n,r)$ between the underlying spaces of the operad of little $n$-discs $\DOp_n(r)$
and the configuration spaces of points in the Euclidean space $\FOp(\RR^n,r)$ (see~\S\ref{background:fulton-macpherson-operad}).
In a first step, we recall the following results about the cohomology algebras of these spaces:

\begin{thm}[{V.I. Arnold~\cite{Arnold}, F. Cohen~\cite{Cohen}}]\label{thm:rational-homotopy:arnold-presentation}
Let $n\geq 2$. For each $r\in\NN$, the graded commutative algebra $\DGH^*(\DOp_n(r))\simeq\DGH^*(\FOp(\RR^n,r))$
has a presentation of the form:
\begin{equation*}
\DGH^*(\FOp(\mathring{\DD}{}^n,r)) = \frac{\Sym(\omega_{ij},\{i,j\}\subset\{1,\dots,r\})}{(\omega_{ij}^2,\omega_{ij}\omega_{jk}+\omega_{jk}\omega_{ki}+\omega_{ki}\omega_{ij})}
\end{equation*}
where a generator $\omega_{ij}$ of upper degree $\deg^*(\omega_{ij}) = n-1$ is assigned to each pair $i\not=j$
with the symmetry formula $\omega_{ij} = (-1)^n\omega_{ji}$. (We use the notation $\Sym(-)$ for the symmetric algebra.)
\end{thm}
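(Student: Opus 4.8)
The plan is to exploit the homotopy equivalence $\DOp_n(r)\simeq\FOp(\RR^n,r)$ recalled in~\S\ref{background:fulton-macpherson-operad} and to work entirely with the configuration space $\FOp(\RR^n,r)$. First I would define the generators. For each pair $i\neq j$, consider the Gauss map $p_{ij}\colon\FOp(\RR^n,r)\to\Sphere^{n-1}$ that carries a configuration $(a_1,\dots,a_r)$ to the unit vector $(a_j-a_i)/\|a_j-a_i\|$, and set $\omega_{ij}=p_{ij}^*(\nu)$, where $\nu\in\DGH^{n-1}(\Sphere^{n-1})$ is the fundamental class. Since $p_{ji}$ is the composite of $p_{ij}$ with the antipodal map of $\Sphere^{n-1}$, and the antipode has degree $(-1)^n$, we obtain the symmetry relation $\omega_{ij}=(-1)^n\omega_{ji}$; the relation $\omega_{ij}^2=0$ holds because $\omega_{ij}$ is pulled back from $\DGH^*(\Sphere^{n-1})$, where the square of the top class vanishes for degree reasons. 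The three-term Arnold relation $\omega_{ij}\omega_{jk}+\omega_{jk}\omega_{ki}+\omega_{ki}\omega_{ij}=0$ is a relation in $\DGH^*(\FOp(\RR^n,3))$, which one checks directly in arity three and then propagates to all arities by pulling back along the coordinate-forgetting projections $\FOp(\RR^n,r)\to\FOp(\RR^n,3)$. These verifications produce a well-defined morphism of graded commutative algebras $\phi\colon A_r\to\DGH^*(\FOp(\RR^n,r))$ from the presented algebra $A_r$ on the right-hand side of the theorem.

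Next I would prove that $\phi$ is an isomorphism by separating a surjectivity statement from a dimension count. The additive structure of $\DGH^*(\FOp(\RR^n,r))$ is computed from the Fadell--Neuwirth fibration
\[
\FOp(\RR^n,r)\to\FOp(\RR^n,r-1),
\]
whose fiber is $\RR^n$ with $r-1$ points removed, hence homotopy equivalent to a wedge $\bigvee_{r-1}\Sphere^{n-1}$. These fibrations admit sections, so the Leray--Hirsch theorem applies and shows inductively that $\DGH^*(\FOp(\RR^n,r))$ is a free module over $\DGH^*(\FOp(\RR^n,r-1))$ with basis $1,\omega_{1r},\dots,\omega_{r-1,r}$. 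Iterating yields an explicit additive basis of \emph{admissible monomials} and the Poincaré series $\prod_{k=1}^{r-1}(1+k\,t^{n-1})$, and it shows in particular that $\phi$ is surjective, since the classes $\omega_{ij}$ already exhaust this basis.

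The remaining and decisive point is the injectivity of $\phi$, which I would reduce to an upper bound on the dimension of $A_r$ in each degree. Here the strategy is to use $\omega_{ij}^2=0$, the symmetry relation, and the Arnold relation as rewriting rules, so as to bring an arbitrary monomial to the same admissible normal form (with strictly increasing second indices). Comparing this spanning set of $A_r$ with the Leray--Hirsch basis then forces the admissible monomials to be a basis of $A_r$ as well, whence $\phi$ is an isomorphism. The hard part will be exactly this last step: showing that the Arnold relation together with the square-zero relations is \emph{sufficient} to rewrite every product into admissible form, equivalently that the rewriting system is confluent and the admissible monomials are linearly independent in $A_r$. This combinatorial verification is what certifies that the listed relations present the cohomology with no redundancy and no deficiency, and it is the genuine obstacle in the argument.
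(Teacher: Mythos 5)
Your proposal is correct and is consistent with the paper, which does not actually prove this theorem: its ``Explanations'' only record the definition of the classes $\omega_{ij}$ as pullbacks of the fundamental class of $\Sphere^{n-1}$ along the Gauss maps $\pi_{ij}$ and defer everything else to Arnold, Cohen and Sinha's survey. Your completion --- checking the relations to get a map $\phi\colon A_r\rightarrow\DGH^*(\FOp(\RR^n,r))$, then using the sectioned Fadell--Neuwirth fibrations and Leray--Hirsch to produce the admissible-monomial basis and the Poincar\'e series $\prod_{k=1}^{r-1}(1+k\,t^{n-1})$, and finally matching dimensions --- is exactly the classical argument in those references. You are also right that the only genuinely nontrivial step is the one you flag but do not carry out, namely that the Arnold and square-zero relations suffice to rewrite every monomial of $A_r$ into admissible form (so that the admissible monomials span $A_r$ and the dimension count closes); this rewriting is standard (one uses the Arnold relation to ensure each index occurs at most once as a second index, with a terminating induction), but a complete proof would have to include it.
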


\begin{proof}[Explanations]
The result established by V.I. Arnold in~\cite{Arnold} concerns the case $n=2$ of this statement.
The already cited work of F. Cohen~\cite{Cohen} gives the general case $n\geq 2$.
We also refer to Sinha's survey~\cite{SinhaHomology} for a gentle introduction to the computation of this theorem.
Let us simply mention that the classes $\omega_{ij}\in\DGH^*(\FOp(\RR^n,r))$
represent the pullbacks of the fundamental class of the $n-1$-sphere $\omega\in\DGH^*(\Sphere^{n-1})$
under the maps $\pi_{ij}: \FOp(\RR^n,r)\rightarrow\Sphere^{n-1}$
such that $\pi_{ij}(a_1,\dots,a_r) = (a_j-a_i)/||a_j-a_i||$, for any element $\underline{a} = (a_1,\dots,a_r)\in\FOp(\RR^n,r)$.
In what follows, we refer to these cohomology classes $\omega_{ij}$ as the Arnold classes,
and we refer to the presentation of this theorem
as the Arnold presentation.
\end{proof}

Then we have the following result:

\begin{prop}\label{thm:rational-homotopy:little-discs-operad-cohomology}
The cohomology algebras $\DGH^*(\DOp_n(r))$, $r\in\NN$, form a Hopf cooperad in graded modules such that $\DGH^*(\DOp_n)\cong\PoisOp_n^c$,
where $\PoisOp_n^c$ denotes the cooperad dual to $\PoisOp_n$
in graded modules.
For a Poisson monomial $\pi(x_1,\dots,x_r)\in\PoisOp_n(r)$, we have the duality formula:
\begin{equation*}
\langle\omega_{ij},\pi(x_1,\dots,x_r)\rangle = \begin{cases} 1, & \text{if $\pi(x_1,\dots,x_r)=x_1\dots[x_i,x_j]\cdots\widehat{x_j}\cdots x_r$},\\
0, & \text{otherwise}.
\end{cases}
\end{equation*}
\end{prop}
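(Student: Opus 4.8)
The plan is to produce the Hopf cooperad structure formally, by dualization, and then to deduce the isomorphism $\DGH^*(\DOp_n)\cong\PoisOp_n^c$ directly from Cohen's computation of the homology (Theorem~\ref{thm:background:little-discs-homology}), leaving the duality formula as the only genuinely computational point. Since each space $\DOp_n(r)\sim\FOp(\RR^n,r)$ has degreewise finite-dimensional cohomology over $\QQ$, I would build the Hopf cooperad structure by dualizing the homology operad. The discussion preceding Theorem~\ref{thm:background:little-discs-homology} already records that $\DGH_*(-)$, being lax symmetric monoidal, carries $\DOp_n$ to an operad $\DGH_*(\DOp_n)$ in graded modules. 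Taking the arity- and degreewise linear dual turns each composition product $\circ_i\colon\DGH_*(\DOp_n(k))\otimes\DGH_*(\DOp_n(l))\to\DGH_*(\DOp_n(k+l-1))$ into a composition coproduct $\circ_i^*\colon\DGH^*(\DOp_n(k+l-1))\to\DGH^*(\DOp_n(k))\otimes\DGH^*(\DOp_n(l))$, and the equivariance, unit and coassociativity relations follow by dualizing the operad relations. The cup product makes each $\DGH^*(\DOp_n(r))$ a commutative graded algebra, compatibly with these coproducts, which is precisely the asserted Hopf cooperad structure.

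Second, because the modules $\DGH_*(\DOp_n(r))$ and $\PoisOp_n(r)$ are both degreewise finite-dimensional over $\QQ$ (the latter being spanned by the Poisson monomials that are linear in each of the $r$ variables, which are finite in number), I would simply dualize Cohen's isomorphism $\phi\colon\PoisOp_n\xrightarrow{\simeq}\DGH_*(\DOp_n)$. The transpose $\phi^{\vee}\colon\DGH^*(\DOp_n)\xrightarrow{\simeq}\PoisOp_n^c$ is then the required isomorphism of Hopf cooperads, where $\PoisOp_n^c$ is the graded dual of $\PoisOp_n$ with its dual cooperad structure. Everything here is formal once the finiteness is in place.

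The only thing left is to identify this pairing on the generating Arnold classes, that is, to establish the duality formula. I would use the geometric description of $\omega_{ij}$ recalled in the explanation of Theorem~\ref{thm:rational-homotopy:arnold-presentation}: $\omega_{ij}=\pi_{ij}^*(\omega)$, where $\omega\in\DGH^{n-1}(\Sphere^{n-1})$ is the fundamental class and $\pi_{ij}\colon\FOp(\RR^n,r)\to\Sphere^{n-1}$ sends $\underline{a}$ to $(a_j-a_i)/\|a_j-a_i\|$. By naturality of the evaluation pairing, $\langle\omega_{ij},z\rangle=\langle\omega,(\pi_{ij})_*z\rangle$ for any homology class $z\in\DGH_{n-1}(\FOp(\RR^n,r))$. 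The degree $n-1$ part of $\PoisOp_n(r)$ has as a basis the monomials $x_1\cdots[x_k,x_l]\cdots\widehat{x_l}\cdots x_r$ indexed by pairs $\{k,l\}$, and under $\phi$ such a monomial corresponds to the fundamental class $z_{kl}$ of a ``linking sphere'', in which the points $k$ and $l$ revolve around one another while the remaining points sit at fixed generic positions (this is $\phi(\lambda)=\lambda_0$ inserted on the inputs $k,l$ and $\phi(\mu)=\mu_0$ on the rest). The restriction of $\pi_{ij}$ to this sphere has degree $\pm1$ when $\{i,j\}=\{k,l\}$, since $\pi_{ij}$ then records exactly the revolution direction, and is null-homotopic when $\{i,j\}\neq\{k,l\}$, since at most one of the points $i,j$ moves and the difference $a_j-a_i$ stays in a contractible region. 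This yields $\langle\omega_{ij},z_{kl}\rangle=\pm\delta_{\{i,j\},\{k,l\}}$.

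The main obstacle is the sign: one must verify that the relevant degree is $+1$, not $-1$, consistently with the orientation conventions fixing $\omega$ and $\lambda_0$ and with the symmetry rule $\omega_{ij}=(-1)^n\omega_{ji}$ built into the Arnold presentation. Pinning this down requires tracking how $\phi$ assembles the cycle $z_{kl}$ from $\mu_0$ and $\lambda_0$ through the operadic composition products, and matching the induced orientation of the linking sphere with the chosen orientation of $\Sphere^{n-1}$. The clean way to control this is to normalize the arity-two case, where $\DOp_n(2)\sim\Sphere^{n-1}$ forces $\langle\omega_{12},\lambda_0\rangle=1$ by definition of fundamental classes, and then to propagate the normalization to all arities using the compatibility of the Arnold classes with the cooperad coproducts $\circ_i^*$ established in the first step.
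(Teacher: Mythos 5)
Your construction of the Hopf cooperad structure and the deduction of $\DGH^*(\DOp_n)\cong\PoisOp_n^c$ by dualizing Cohen's isomorphism are exactly what the paper does (the paper phrases the coproducts via inverting the K\"unneth morphism, which is the same finiteness argument). The one place where you diverge is the duality formula. You compute it geometrically: you realize the basis of $\PoisOp_n(r)$ in degree $n-1$ by linking spheres and evaluate the degree of the Gauss map $\pi_{ij}(\underline{a})=(a_j-a_i)/\|a_j-a_i\|$ on each of them, which forces you to confront the orientation and sign bookkeeping you flag at the end. The paper instead factors the Gauss map through the forgetful map $\FOp(\RR^n,r)\to\FOp(\RR^n,2)$, $(a_1,\dots,a_r)\mapsto(a_i,a_j)$, so that $\omega_{ij}$ is the pullback of $\omega_{12}$, and observes that this forgetful map is dual to operadic composition with the arity-zero element $*\in\DOp_n(0)$, i.e.\ to substituting the unit $e\in\PoisOp_n(0)$ for every variable other than $x_i,x_j$. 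The rules $\mu\circ e=1$ and $\lambda\circ e=0$ then kill every monomial except $x_1\cdots[x_i,x_j]\cdots\widehat{x_j}\cdots x_r$, whose image is $\lambda(x_i,x_j)$, and the whole computation reduces to the arity-two normalization $\langle\omega_{12},\lambda\rangle=1$. This is precisely the ``propagate the arity-two normalization through the coproducts'' strategy you propose as a fallback in your last paragraph, and it buys you freedom from any degree-of-map or orientation analysis; your direct geometric argument is valid but, to be complete, would need the sign verification carried out, whereas the paper's algebraic route makes that issue disappear.
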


\begin{proof}[Explanations and references]
The graded algebras $\DGH^*(\DOp_n(r))$ inherit an action of the symmetric groups
by functoriality of the cohomology, and the composition products of the operad $\DOp_n$ induce cooperad composition coproducts,
because, as the homology of the spaces $\DOp_n(r)$
form free modules of finite rank over the ground ring
in each arity,
we can invert the K\"unneth morphism
to form a composite $\DGH^*(\DOp(k+l-1))\xrightarrow{\circ_i^*}\DGH^*(\DOp(k)\times\DOp(l))\xleftarrow{\simeq}\DGH^*(\DOp(k))\otimes\DGH^*(\DOp(l))$
in the category of commutative graded algebras, for every $k,l\in\NN$ and $i\in\{1,\dots,r\}$.
Note that we have $\DOp_n(0) = *\Rightarrow\DGH^*(\DOp_n(0)) = \QQ$ and $\DOp_n(r)\sim *\Rightarrow\DGH^*(\DOp_n(0)) = \QQ$,
so that the collection $\DGH^*(\DOp_n) = \{\DGH^*(\DOp_n(r),r\in\NN\}$ satisfies our connectedness condition in the definition of a cooperad.

The dual graded modules of the components of the $n$-Poisson operad $\PoisOp_n^c(r) = \Hom_{\gr\Mod}(\PoisOp_n(r),\QQ)$
also inherit a cooperad structure by duality, because the objects $\PoisOp_n(r)$
form graded modules of finite rank.
Then we just use the duality between the cohomology and the homology $\DGH^*(\DOp(r)) = \Hom_{\gr\Mod}(\DGH_*(\DOp_n(r)),\QQ)$
to deduce the identity $\DGH^*(\DOp_n)\cong\PoisOp_n^c$ from the relation $\DGH_*(\DOp_n)\cong\PoisOp_n$
given in Theorem~\ref{thm:background:little-discs-homology}.

In the duality formula of the theorem, we use that the representation of the elements of the $n$-Poisson operad
in terms of Poisson polynomials that we explain in our survey of Theorem~\ref{thm:background:little-discs-homology}.
Recall that the representative of the Poisson bracket operation in the homology of the little $n$-discs operad $\lambda\in\DGH_*(\DOp_n(2))$
corresponds to the fundamental class of the $n-1$-sphere when we use the relation $\DOp_n(2)\sim\Sphere^{n-1}$.
Therefore, we trivially have the relation $\langle\omega_{12},\lambda(x_1,x_2)\rangle = 1$ in arity $2$.
To get the general formula of the theorem, we use that the cohomology classes $\omega_{ij}$
are identified with the pullbacks of the class $\omega_{12}$ under the map $\pi_{ij}: \FOp(\RR^n,r)\rightarrow\FOp(\RR^n,2)$
such that $\pi_{ij}(a_1,\dots,a_r) = (a_i,a_j)$
in the cohomology algebra $\DGH^*(\DOp_n(r))\simeq\DGH^*(\FOp(\RR^n,r))$
and that this operation corresponds to composites with a zero-ary operation $*\in\DOp_n(0)$
which represents an algebra unit $e\in\PoisOp_n(0)$ when we pass to the $n$-Poisson operad $\PoisOp_n$.
We refer to the paper~\cite{SinhaHomology} for a more thorough study of this duality relation between the $n$-Poisson polynomials
and the elements of the cohomology algebras $\DGH^*(\DOp_n(r))\simeq\DGH^*(\FOp(\RR^n,r))$
in the Arnold presentation.
\end{proof}

We can regard the object $\PoisOp_n^c\simeq\DGH^*(\DOp_n)$ as a Hopf cochain dg-cooperad equipped with a trivial differential.
We have to make explicit a cofibrant resolution of this object for the applications of our methods
of the rational homotopy theory of operads.
We use graded analogues of the Drinfeld--Kohno Lie algebra operad of the previous section
to give a first construction of such a resolution.

\subsubsection{The graded Drinfeld--Kohno Lie algebra operads and the associated Chevalley--Eilenberg cochain complexes}\label{rational-homotopy:graded-drinfeld-kohno-lie-algebra-operad}
The graded analogues of the Drinfeld--Kohno Lie algebra operad, which we define for every value of the parameter $n\geq 2$,
are denoted by~$\palg_n$. The ungraded Drinfeld--Kohno Lie algebra operad of~\S\ref{e2-operads:drinfeld-kohno-lie-algebra-operads}
corresponds to the case $n=2$.
Thus, we have $\palg = \palg_2$ with the notation of~\S\ref{e2-operads:drinfeld-kohno-lie-algebra-operads}.

To define the Lie algebras~$\palg_n(r)$, we use the same presentation
as in~\S\ref{e2-operads:drinfeld-kohno-lie-algebra-operads}(\ref{eqn:e2-operads:drinfeld-kohno-lie-algebra-operads:presentation}):
\begin{equation}\label{eqn:rational-homotopy:graded-drinfeld-kohno-lie-algebra-operad:presentation}
\palg_n(r) = \LLie(t_{ij},\{i,j\}\subset\{1,\dots,r\})/<[t_{ij},t_{kl}],[t_{ij},t_{ik}+t_{jk}]>,
\end{equation}
but we now take $\deg(t_{ij}) = n-2$ and we assume the graded symmetry relation $t_{ji} = (-1)^n t_{ij}$,
for every pair $\{i,j\}\subset\{1,\dots,r\}$.
Then we take the same construction as in~\S\ref{e2-operads:drinfeld-kohno-lie-algebra-operads}
to provide these Lie algebras with an action of the symmetric groups
and with additive composition products $\circ_i: \palg_n(k)\oplus\palg_n(l)\rightarrow\palg_n(k+l-1)$,
so that the collection $\palg_n = \{\palg_n(r),r\in\NN\}$ inherits the structure
of an operad in the category of graded Lie algebras.

Note that the graded Lie algebras $\palg_n(r)$ still inherit a weight grading from the free Lie algebra,
and hence, form weight graded objects in the category of graded modules.
Besides, we can form a completed version of the operads $\hat{\palg}_n$, as in the case $n=2$
in~\S\ref{e2-operads:drinfeld-kohno-lie-algebra-operads},
but for $n\geq 3$, we trivially have $\hat{\palg}_n = \palg_n$
because the components of homogeneous weight $m\geq 1$ of the Lie algebras $\palg_n(r)$
are concentrated in a degree $* = m(n-2)$ and we have $m(n-2)\rightarrow\infty$
when $n\geq 3$.

We consider the Chevalley--Eilenberg cochain complexes $\DGC_{CE}^*(\hat{\galg})$ associated to the complete Lie algebras $\hat{\galg} = \hat{\palg}_n(r)$.
The cofibrant objects of the category of commutative cochain dg-algebras are retracts of dg-algebras of the form $R = (\Sym(V),\partial)$,
where $\Sym(V)$ is the symmetric algebra on an upper graded dg-module $V$
equipped with a filtration $\DGF_1 V\subset\DGF_2 V\subset\cdots\supset\DGF_m V\subset\cdots\subset V$
and where we have a differential $\partial$ such that $\partial(\DGF_m V)\subset\Sym(\DGF_{m-1} V)$.
The Chevalley--Eilenberg cochain complex is precisely defined by an expression of this form $\DGC_{CE}^*(\galg) = (\Sym(\QQ[-1]\otimes\hat{\galg}^{\vee}),\partial)$,
where $\QQ[-1] = \QQ\ecell$ denotes the graded module generated by a single element $\ecell$ in lower degree $-1$ (equivalently, in upper degree one)
and $\hat{\galg}^{\vee}$ denotes the (continuous) dual of the completed Lie algebra $\hat{\galg} = \hat{\palg}_n(r)$.
The differential $\partial$ is induced by the dual map of the Lie bracket $[-,-]$
on $\hat{\galg}$.

The commutative cochain dg-algebras $\DGC_{CE}^*(\hat{\palg}_n(r))$
inherit an action of the symmetric groups by functoriality of the Chevalley--Eilenberg cochain complex,
as well as composition coproducts $\circ_i^*: \DGC_{CE}^*(\hat{\palg}(k+l-1))\rightarrow\DGC_{CE}^*(\hat{\palg}(k))\otimes\DGC_{CE}^*(\hat{\palg}(l))$,
which are given by the composites of the morphisms $\circ_i^*: \DGC_{CE}^*(\hat{\palg}(k+l-1))\rightarrow\DGC_{CE}^*(\hat{\palg}(k)\oplus\hat{\palg}(l))$
induced by the composition products of the operad $\palg_n$
with the K\"unneth isomorphisms $\DGC_{CE}^*(\hat{\palg}(k)\oplus\hat{\palg}(l))\simeq\DGC_{CE}^*(\hat{\palg}(k))\otimes\DGC_{CE}^*(\hat{\palg}(l))$.
Hence, we get that the collection $\DGC_{CE}^*(\hat{\palg}_n) = \{\DGC_{CE}^*(\hat{\palg}_n(r)),r\in\NN\}$
inherits the structure of Hopf cochain dg-cooperad. In addition, one can prove that this Hopf cochain dg-cooperad
is cofibrant (see~\cite[Theorem II.14.1.7]{FresseBook}).

Then we have the following results:

\begin{thm}[{T. Kohno~\cite{Kohno}}]\label{thm:rational-homotopy:drinfeld-kohno-cohomology-algebra-qiso}
We have a quasi-isomorphism of commutative cochain dg-algebras
\begin{equation*}
\DGC_{CE}^*(\hat{\palg}_n(r))\xrightarrow{\sim}\DGH^*(\FOp_n(\mathring{\DD}{}^n,r))
\end{equation*}
such that $t_{ij}^{\vee}\mapsto\omega_{ij}$ for each pair $\{i,j\}\subset\{1,\dots,r\}$
and $p^{\vee}\mapsto 0$ when $p^{\vee}$ is the dual basis element of a homogeneous Lie polynomial $p\in\palg_n(r)_m$
of weight $m>1$.
\end{thm}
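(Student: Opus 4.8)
The plan is to read $\DGC_{CE}^*(\hat\palg_n(r))$ as a cochain complex computing the Lie algebra cohomology of the graded Drinfeld--Kohno Lie algebra $\palg_n(r)$, to produce the comparison map out of the duality between the defining relations of $\palg_n(r)$ and the Arnold relations of Theorem~\ref{thm:rational-homotopy:arnold-presentation}, and then to prove that this map is a quasi-isomorphism by an induction on the number of points $r$ that mirrors the fibration structure of the configuration spaces. For $n\geq 3$ no completion is needed, since the weight-$m$ part of $\palg_n(r)$ sits in degree $m(n-2)$ and these degrees tend to infinity; for $n=2$ the weight grading still controls the completion, so the argument goes through verbatim.

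First I would set up the map. Since $\DGC_{CE}^*(\hat\palg_n(r)) = (\Sym(\QQ[-1]\otimes\hat\palg_n(r)^{\vee}),\partial)$ is free as a graded-commutative algebra on $W = \QQ[-1]\otimes\hat\palg_n(r)^{\vee}$, it suffices to prescribe the map on $W$: send the weight-one duals $t_{ij}^{\vee}$ to the Arnold classes $\omega_{ij}$ and every generator of weight $\geq 2$ to zero. The shift by $\QQ[-1]$ places $t_{ij}^{\vee}$ in upper degree $(n-2)+1 = n-1$, matching $\deg(\omega_{ij})$, and the graded symmetry $t_{ji} = (-1)^n t_{ij}$ dualizes to $\omega_{ij} = (-1)^n\omega_{ji}$, so the map preserves degree and symmetry.

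The substantive point is the chain-map condition $f\partial = 0$, the target carrying the trivial differential. Because the bracket preserves the weight grading, $\partial$ preserves weight and raises by one the number of factors from $W$. Hence $\partial(t_{ij}^{\vee}) = 0$ for weight reasons (a weight-one class cannot be a product of two positive-weight classes), so the weight-one generators are cocycles; and for a generator of weight $m\geq 3$ every term of $\partial$ contains a factor of weight $\geq 2$, which $f$ kills, so $f\partial$ vanishes there too. The only genuine condition is on weight-two generators, where $\partial(q^{\vee})$ is dual to the bracket $\palg_n(r)_1\otimes\palg_n(r)_1\rightarrow\palg_n(r)_2$. Here I would check directly that the infinitesimal braid relations dualize to the Arnold relations: the commutation relation $[t_{ij},t_{kl}] = 0$ for disjoint index pairs together with the Yang--Baxter relation $[t_{ij},t_{ik}+t_{jk}] = 0$ are precisely the quadratic-dual relations of $\omega_{ij}^2 = 0$ and $\omega_{ij}\omega_{jk}+\omega_{jk}\omega_{ki}+\omega_{ki}\omega_{ij} = 0$, so that $f(\partial(q^{\vee})) = 0$ in the Arnold algebra for every weight-two generator. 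This makes $f$ a well-defined morphism of commutative cochain dg-algebras.

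It remains to prove that $f$ is a quasi-isomorphism, which is the main obstacle. I would induct on $r$ using the fibration $\FOp(\RR^n,r)\rightarrow\FOp(\RR^n,r-1)$ that forgets the last point, whose fiber is $\RR^n\setminus\{r-1\text{ points}\}\simeq\bigvee_{r-1}\Sphere^{n-1}$. On the algebraic side this is matched by the split short exact sequence $0\rightarrow\LLie(t_{1r},\dots,t_{r-1,r})\rightarrow\palg_n(r)\rightarrow\palg_n(r-1)\rightarrow 0$, whose kernel is a free graded Lie algebra; the Chevalley--Eilenberg cohomology of a free Lie algebra is concentrated in word-lengths $0$ and $1$ (namely $\QQ$ and the linear dual of the generators), reproducing $\DGH^*(\bigvee_{r-1}\Sphere^{n-1})$. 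I would then compare the Serre spectral sequence of the configuration-space fibration with the Hochschild--Serre spectral sequence of the Lie algebra extension: the map $f$ induces a morphism of spectral sequences which is an isomorphism on $E_2$ by the inductive hypothesis and the fiber computation, and both sequences degenerate at $E_2$ (topologically by Leray--Hirsch, since the $\omega_{ir}$ restrict to a basis of the fiber cohomology; algebraically by the splitting). The hard part is exactly this compatibility and degeneration, and it is equivalently packaged as the assertion that the quadratic enveloping algebra $\UFree(\palg_n(r))$ is Koszul with Koszul dual the Arnold algebra, so that its Chevalley--Eilenberg cohomology is the Arnold algebra by Koszul duality.
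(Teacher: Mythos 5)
Your proposal is correct, and it is essentially a self-contained expansion of what the paper outsources: the paper's ``proof'' consists of citing Kohno for the case $n=2$ and asserting that the general case follows from the fact that the Arnold algebra $\DGH^*(\FOp(\mathring{\DD}{}^n,r))$ is Koszul with the enveloping algebra $\UFree(\palg_n(r))$ as Koszul dual, deferring to \cite[Theorem II.14.1.14]{FresseBook}. Your route makes this explicit: the construction of the map on the generating module $\QQ[-1]\otimes\hat{\palg}_n(r)^{\vee}$, the weight-filtration argument reducing the chain-map condition to the quadratic duality between the infinitesimal braid relations and the Arnold relations, and then the induction on $r$ comparing the Serre spectral sequence of the point-forgetting fibration $\FOp(\RR^n,r)\rightarrow\FOp(\RR^n,r-1)$ (fiber a wedge of $r-1$ spheres $\Sphere^{n-1}$) with the Hochschild--Serre spectral sequence of the split extension $0\rightarrow\LLie(t_{1r},\dots,t_{r-1\,r})\rightarrow\palg_n(r)\rightarrow\palg_n(r-1)\rightarrow 0$. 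This is precisely the classical Arnold--Cohen--Kohno argument that underlies the Koszulness statement the paper invokes, and you correctly identify the equivalence at the end. Two small remarks: the claimed $E_2$-degeneration of both spectral sequences is not actually needed (an isomorphism on $E_2$ already propagates to the abutment since the filtrations are finite in each degree), and for $n=2$ the comparison on $E_2$ requires matching the $\pi_1$-action on the fiber cohomology with the adjoint action on $\DGH_{CE}^*(\LLie(t_{1r},\dots,t_{r-1\,r}))$, which is the one point you wave at rather than check; it is standard but worth stating. What your approach buys is a proof that works uniformly over $\QQ$ without appealing to the Koszul duality machinery; what the paper's formulation buys is brevity and a conceptual packaging that also identifies the Chevalley--Eilenberg complex as the Koszul (hence cofibrant, after the operadic upgrade) resolution used later.
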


\begin{proof}[Explanations]
The cited reference gives the case $n=2$ of this statement. The general result can be deduced from the observation
that the cohomology algebra $\DGH^*(\FOp_n(\mathring{\DD}{}^n,r)$
forms a Koszul algebra with the enveloping algebra of the Lie algebra $\palg_n(r)$
as dual associative algebra. We refer to~\cite[Theorem II.14.1.14]{FresseBook} for further explanations on this approach.
\end{proof}

\begin{prop}\label{prop:rational-homotopy:drinfeld-kohno-cohomology-cooperad-qiso}
The quasi-isomorphisms of Theorem~\ref{thm:rational-homotopy:drinfeld-kohno-cohomology-algebra-qiso}
define a weak-equivalence of Hopf cochain dg-cooperads
\begin{equation*}
\DGC_{CE}^*(\hat{\palg}_n)\xrightarrow{\sim}\DGH^*(\DOp_n) = \PoisOp_n^c,
\end{equation*}
where we regard the cohomology of the little $n$-discs operad $\DGH^*(\DOp_n)$ as a Hopf cochain dg-cooperad
equipped with a trivial differential.
\end{prop}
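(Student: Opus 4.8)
The plan is to exploit the fact that each arity-wise map is \emph{already} a quasi-isomorphism by Theorem~\ref{thm:rational-homotopy:drinfeld-kohno-cohomology-algebra-qiso}, so that the only thing left to establish is that the collection $\phi=\{\phi_r\}$, with $\phi_r\colon\DGC_{CE}^*(\hat{\palg}_n(r))\to\DGH^*(\DOp_n(r))$, defines a morphism in the category $\dg^*\Hopf\Op_{*1}^c$. Once this is known, $\phi$ is automatically a weak-equivalence of Hopf cochain dg-cooperads, since it is an arity-wise quasi-isomorphism. I would therefore reduce the statement to two verifications: that $\phi$ is $\Sigma_r$-equivariant in each arity, and that it intertwines the composition coproducts $\circ_i^*$ of the two Hopf cochain dg-cooperads.

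The equivariance is immediate: both the $\Sigma_r$-action on $\DGC_{CE}^*(\hat{\palg}_n(r))$ and on $\DGH^*(\DOp_n(r))$ are induced by the reindexing operations $t_{ij}\mapsto t_{\sigma(i)\sigma(j)}$ and $\omega_{ij}\mapsto\omega_{\sigma(i)\sigma(j)}$, and these carry the same graded-symmetry signs ($t_{ji}=(-1)^n t_{ij}$, $\omega_{ji}=(-1)^n\omega_{ij}$), so that the assignment $t_{ij}^\vee\mapsto\omega_{ij}$ of Theorem~\ref{thm:rational-homotopy:drinfeld-kohno-cohomology-algebra-qiso} is visibly equivariant. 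For the coproduct compatibility, I would use that the source $\DGC_{CE}^*(\hat{\palg}_n(k+l-1))=\Sym(\QQ[-1]\otimes\hat{\palg}_n(k+l-1)^\vee)$ is free as a graded-commutative algebra, so that the two composite dg-algebra morphisms $\DGC_{CE}^*(\hat{\palg}_n(k+l-1))\to\DGH^*(\DOp_n(k))\otimes\DGH^*(\DOp_n(l))$ obtained from the two sides of the naturality square agree as soon as they agree on the generating module $\QQ[-1]\otimes\hat{\palg}_n(k+l-1)^\vee$. Since $\phi$ annihilates every generator of weight $m>1$, only the weight-one generators $t_{ab}^\vee$ need to be tested, and on these $\phi(t_{ab}^\vee)=\omega_{ab}$. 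I would also record the useful simplification that the Chevalley--Eilenberg coproduct $\circ_i^*$, being induced by the Lie operad composition $\circ_i\colon\hat{\palg}_n(k)\oplus\hat{\palg}_n(l)\to\hat{\palg}_n(k+l-1)$ followed by the K\"unneth isomorphism, sends a weight-one generator to a purely linear combination of weight-one generators in the two tensor factors, with no products and no contribution from the differential part, namely to the linear dual of $\circ_i$ read off from the explicit formulas~(\ref{eqn:e2-operads:drinfeld-kohno-lie-algebra-operads:left-composition-definition}--\ref{eqn:e2-operads:drinfeld-kohno-lie-algebra-operads:right-composition-definition}).

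The remaining check, and the main obstacle, is the purely combinatorial identity that matches the linear dual of the $\palg_n$-insertion formulas with the coproduct of the Arnold classes in $\PoisOp_n^c$. Concretely, on the target side $\circ_i^*(\omega_{ab})$ is the dual, via the pairing of Proposition~\ref{thm:rational-homotopy:little-discs-operad-cohomology}, of the $\PoisOp_n$-composition $\circ_i\colon\PoisOp_n(k)\otimes\PoisOp_n(l)\to\PoisOp_n(k+l-1)$ evaluated on single-bracket monomials $x_1\cdots[x_a,x_b]\cdots x_r$; I would compute it by the case analysis according to whether the indices $a,b$ both lie in the inserted block $\{i,\dots,i+l-1\}$, both lie outside it, or are split between the block and its complement. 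Each case must be seen to reproduce exactly the dual of the corresponding clause of~(\ref{eqn:e2-operads:drinfeld-kohno-lie-algebra-operads:left-composition-definition}--\ref{eqn:e2-operads:drinfeld-kohno-lie-algebra-operads:right-composition-definition}); in particular the clause $i=a<b$, where $t_{ab}$ maps to a sum $t_{a\,b+l-1}+\cdots+t_{a+l-1\,b+l-1}$, must dualize to the coproduct term in which the pair $\{a,b\}$ is recorded by the outer configuration once the block at slot $i$ has been expanded. This reflects the geometric fact that the infinitesimal braids $t_{ij}$ and the Arnold classes $\omega_{ij}$ both encode the same relative-direction data of a pair of points under disc insertion, so that the two dualized composition structures coincide; the only genuine care required is the bookkeeping of the index shifts and of the graded signs for odd $n$. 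Assembling the equivariance with this coproduct compatibility shows that $\phi$ is a morphism of Hopf cochain dg-cooperads, and hence a weak-equivalence, as claimed.
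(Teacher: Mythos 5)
Your proposal is correct and follows essentially the same route as the paper, which simply records that the proposition ``follows from the straightforward verification that the quasi-isomorphisms of Theorem~\ref{thm:rational-homotopy:drinfeld-kohno-cohomology-algebra-qiso} preserve the cooperad structures attached to our objects.'' You have merely made that verification explicit --- reducing the coproduct compatibility to the weight-one generators via the freeness of $\Sym(\QQ[-1]\otimes\hat{\palg}_n^{\vee})$ as a graded commutative algebra, and matching the dual of the insertion formulas~(\ref{eqn:e2-operads:drinfeld-kohno-lie-algebra-operads:left-composition-definition}--\ref{eqn:e2-operads:drinfeld-kohno-lie-algebra-operads:right-composition-definition}) with the coproduct of the Arnold classes --- which is exactly the check the paper leaves to the reader.
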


\begin{proof}[Explanations]
The result of this proposition follows from the straightforward verification that the quasi-isomorphisms
of Theorem~\ref{thm:rational-homotopy:drinfeld-kohno-cohomology-algebra-qiso}
preserve the cooperad structures attached to our objects.
\end{proof}

We deduce from the above proposition that the object $\DGC_{CE}^*(\hat{\palg}_n)$
defines a cofibrant resolution of the object $\PoisOp_n^c = \DGH^*(\DOp_n)$
in the category of Hopf cochain dg-cooperads.
In our constructions, we actually consider a second cofibrant resolution, which is given by a Hopf cochain dg-cooperad of graphs $\GraphOp_n^c$.

\subsubsection{The graph cooperad}\label{rational-homotopy:graph-cooperad}
The Hopf cochain dg-cooperad $\GraphOp_n^c$ precisely consists of graphs $\gamma\in\GraphOp_n^c(r)$
with undistinguishable internal vertices $\bullet$ and external vertices indexed by $1,\dots,r$,
as in the following pictures:
\begin{equation}\label{eqn:rational-homotopy:graph-cooperad:picture}
\vcenter{\xymatrix@!0@C=1.5em@R=1.5em@M=0pt{ & *{\bullet}\ar@{-}[dl]\ar@{-}[d]\ar@{-}[dr] & \\
*+<6pt>[o][F]{1} & *+<6pt>[o][F]{2} & *+<6pt>[o][F]{3} }}.
\end{equation}
The degree of such a graph is determined by assuming that each internal vertex $\bullet$
contributes to the degree by $\deg(\bullet) = n$
and that each edge contributes to the degree by $\deg(-) = 1-n$ (in the lower grading convention).
Thus, we have $\deg(\gamma) = (1-n)v + n e$ (in the lower grading convention again), where $v$ denotes the number of internal vertices
and $e$ denotes the number of edges in the graph $\gamma\in\GraphOp_n^c(r)$.
In fact, we can regard our graphs as tensor products of symbolic elements given by the internal vertices and by the edges of our objects.
In particular, we assume that graphs equipped with odd symmetries vanish in~$\GraphOp_n^c(r)$. We also assume that each edge is oriented
and that a reversal of orientation is equivalent to the multiplication by a sign $(-1)^n$ in~$\GraphOp_n^c(r)$.
For our purpose, we allow graphs with double edges, but not loops (edges with the same origin and endpoint),
and we assume that each internal vertex is at least trivalent though the latter conditions are not essential.
Besides, we assume that each connected component of our graph contains at least one external vertex.

The differential of graphs is defined by contracting edges in order to merge internal vertices
together or in order to merge internal vertices with external vertices, as shown
schematically in the following picture:
\begin{equation}\label{eqn:rational-homotopy:graph-cooperad:differential}
\delta\;\vcenter{\xymatrix@!0@C=1em@R=1em@M=0pt{ \ar@{-}[drr] & \ar@{-}[dr] & *{\cdots} & \ar@{-}[dl] & \ar@{-}[dll] \\
&& *{\bullet}\ar@{-}[d] && \\
&& *{\bullet} && \\
\ar@{-}[urr] & \ar@{-}[ur] & *{\cdots} & \ar@{-}[ul] & \ar@{-}[ull] }}
\;=\;\vcenter{\xymatrix@!0@C=1em@R=1em@M=0pt{ \ar@{-}[drr] & \ar@{-}[dr] & *{\cdots} & \ar@{-}[dl] & \ar@{-}[dll] \\
&& *{\bullet} && \\
\ar@{-}[urr] & \ar@{-}[ur] & *{\cdots} & \ar@{-}[ul] & \ar@{-}[ull] }}
\qquad\text{and}
\qquad\delta\;\vcenter{\xymatrix@!0@C=1em@R=1.3em@M=0pt{ & \ar@{-}[dr] & *{\cdots} & \ar@{-}[dl] & \\
\ar@{-}[drr] & *{\cdots} & *{\bullet}\ar@{-}[d] & *{\cdots} & \ar@{-}[dll] \\
&& *+<6pt>[o][F]{i} && }}
\;=\;\vcenter{\xymatrix@!0@C=1em@R=1.3em@M=0pt{ \ar@{-}[drr] & \ar@{-}[dr] & *{\cdots} & \ar@{-}[dl] & \ar@{-}[dll] \\
&& *+<6pt>[o][F]{i} && }}.
\end{equation}
For instance, we have the formula:
\begin{equation}\label{eqn:rational-homotopy:graph-cooperad:differential-example}
\delta\,\vcenter{\xymatrix@!0@C=1.5em@R=2em@M=0pt{ & *{\bullet}\ar@{-}[dl]\ar@{-}[d]\ar@{-}[dr] & \\
*+<6pt>[o][F]{1} & *+<6pt>[o][F]{2} & *+<6pt>[o][F]{3} }}
= \vcenter{\xymatrix@!0@C=1.5em@R=2em@M=0pt{ && \\
*+<6pt>[o][F]{1}\ar@/^1em/@{-}[r]\ar@/^1.66em/@{-}[rr] & *+<6pt>[o][F]{2} & *+<6pt>[o][F]{3} }}
\pm\vcenter{\xymatrix@!0@C=1.5em@R=2em@M=0pt{ && \\
*+<6pt>[o][F]{1} & *+<6pt>[o][F]{2}\ar@/^1em/@{-}[r]\ar@/_1em/@{-}[l] & *+<6pt>[o][F]{3} }}
\pm\vcenter{\xymatrix@!0@C=1.5em@R=2em@M=0pt{ && \\
*+<6pt>[o][F]{1} & *+<6pt>[o][F]{2} & *+<6pt>[o][F]{3}\ar@/_1em/@{-}[l]\ar@/_1.66em/@{-}[ll] }}
\end{equation}
in $\GraphOp_n^c(3)$.
The product is given by the amalgamated sum of graphs along external vertices. For instance, we have the formula:
\begin{equation}\label{eqn:rational-homotopy:graph-cooperad:product}
\vcenter{\xymatrix@!0@C=1.5em@R=1.5em@M=0pt{ && \\
*+<6pt>[o][F]{1}\ar@/^1em/@{-}[r]\ar@/^1.66em/@{-}[rr] & *+<6pt>[o][F]{2} & *+<6pt>[o][F]{3} }}
= \vcenter{\xymatrix@!0@C=1.5em@R=1.5em@M=0pt{ && \\
*+<6pt>[o][F]{1}\ar@/^1.66em/@{-}[rr] & *+<6pt>[o][F]{2} & *+<6pt>[o][F]{3} }}
\cdot\vcenter{\xymatrix@!0@C=1.5em@R=1.5em@M=0pt{ && \\
*+<6pt>[o][F]{1}\ar@/^1em/@{-}[r] & *+<6pt>[o][F]{2} & *+<6pt>[o][F]{3} }}.
\end{equation}

The cooperad coproduct $\circ_i^*: \GraphOp_n^c(k+l-1)\rightarrow\GraphOp_n^c(k)\otimes\GraphOp_n^c(l)$,
where we fix $k,l\in\NN$, $i\in\{1,\dots,k\}$,
has the form $\circ_i^*(\gamma) = \sum_{\alpha\subset\gamma} \gamma/\alpha\otimes\alpha$,
where the sum runs over all the subgraphs $\alpha\subset\gamma$ that contain the external vertices indexed by $i,\dots,i+l-1$,
and $\gamma/\alpha$ denotes the graph obtained by collapsing this subgraph
to a single external vertex (which we index by $i$ in the result of the operation,
while we shift the index of the vertices such that $j>i$ by $j\mapsto j-l+1$).
Let us observe that we have $\GraphOp_n^c(1)\not=\QQ$
in general, so that our object $\GraphOp_n^c$ belongs to the extended category of Hopf cochain dg-cooperads $\dg^*\Hopf\Op_{*N}^c$
but not to the category of connected Hopf cochain dg-cooperads $\dg^*\Hopf\Op_{*1}^c$.

We easily see that the commutative cochain dg-algebras of graphs defined in this paragraph $\GraphOp_n^c(r)$
have a structure
of the form $\GraphOp_n^c(r) = (\Sym(\QQ[-1]\otimes\ICG_n^c(r)),\partial)$ (like the Chevalley--Eilenberg cochain dg-algebras
of the previous paragraph),
where $\ICG_n^c(r)$ is a complex of graphs which are connected when we remove the external vertices
inside $\GraphOp_n^c(r)$. (In what follows, we refer to such graphs as internally connected graphs.)
We just perform an extra degree shift in the definition of this complex of internally connected graphs in order to get
a $\QQ[-1]$ factor on the generating dg-module of our symmetric algebra (as in the definition of the Chevalley--Eilenberg cochain complex
of a Lie dg-algebra).
We can actually use this expression to identify the object $\ICG_n^c(r)$ with the dual of an $L_{\infty}$-algebra (a strongly homotopy Lie algebra).
We can use this symmetric algebra structure $\GraphOp_n^c(r) = (\Sym(\QQ[-1]\otimes\ICG_n^c(r)),\partial)$
to prove that $\GraphOp_n^c$ forms a cofibrant object in the category $\dg^*\Hopf\Op_{*N}^c$,
and we also have the following proposition:

\begin{prop}[{M. Kontsevich~\cite{KontsevichMotives}}]\label{prop:rational-homotopy:graph-cohomology-cooperad-qiso}
We have a quasi-isomorphism of Hopf dg-cooperads
\begin{equation*}
\GraphOp_n^c\xrightarrow{\sim}\DGH^*(\DOp_n) = \PoisOp_n^c
\end{equation*}
such that $\xymatrix@!0@C=1.5em@R=1.5em@M=0pt{ *+<6pt>[o][F]{1}\ar@{-}@/^1em/[r] & *+<6pt>[o][F]{2} }\mapsto\omega_{12}$.
\end{prop}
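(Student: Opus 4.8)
The plan is to produce the morphism $\phi\colon\GraphOp_n^c\to\PoisOp_n^c$ from the prescription of the statement and then to check that it is a quasi-isomorphism. First I would define $\phi$ on generators: a graph with no internal vertex and a single edge joining external vertices $i$ and $j$ is sent to the Arnold class $\omega_{ij}$, an amalgamated sum of such single-edge graphs is sent to the corresponding product of Arnold classes, and every graph carrying an internal vertex or a multiple edge is sent to $0$. Compatibility with the products and with the symmetric group actions is immediate from this formula, and compatibility with the cooperad coproducts follows by matching the subgraph-contraction expansion $\circ_i^*(\gamma)=\sum_{\alpha}\gamma/\alpha\otimes\alpha$ against the coproduct of a monomial in the $\omega_{ij}$, which is dual to the Poisson monomials of Proposition~\ref{thm:rational-homotopy:little-discs-operad-cohomology}. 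The one nonformal point is that $\phi$ is a chain map: since $\PoisOp_n^c$ carries the trivial differential, I must verify $\phi(\delta\gamma)=0$. The only graphs whose differential can reach the internal-vertex-free part are those with a single internal vertex; for the tripod joining one internal vertex to three external vertices the image $\phi(\delta(-))$ is exactly the three-term combination $\omega_{ij}\omega_{jk}+\omega_{jk}\omega_{ki}+\omega_{ki}\omega_{ij}$, which vanishes by Theorem~\ref{thm:rational-homotopy:arnold-presentation}, while the double-edge case reproduces $\omega_{ij}^2=0$. Thus the chain-map property encodes precisely the Arnold relations.

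The hard part is that $\phi$ is a quasi-isomorphism, and here I would follow Kontsevich's construction of configuration-space integrals. Using the homotopy equivalence $\DOp_n(r)\sim\FOp(\RR^n,r)$ and the Fulton--MacPherson compactification $\FMOp_n$, I would assign to each edge $(i,j)$ the pullback $\pi_{ij}^*(\mathrm{vol})$ of the volume form of $\Sphere^{n-1}$ along the Gauss map $\pi_{ij}$ of Theorem~\ref{thm:rational-homotopy:arnold-presentation}, and integrate out the positions of the internal vertices along the fibers of the projection that forgets them. In the framework of Theorem~\ref{rational-homotopy:operads} this fiber integration defines a morphism of Hopf dg-cooperads from $\GraphOp_n^c$ to the piecewise semi-algebraic (PA) forms on $\FMOp_n$. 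On a single-edge graph this integration is the identity, so the integration morphism lifts $\phi$ through the quasi-isomorphism $\DGOmega^*_{PA}(\FMOp_n)(r)\xrightarrow{\sim}\DGH^*(\FOp(\RR^n,r))$, and it remains to see that the integration morphism is a cohomology isomorphism in each arity, the target being the Arnold algebra of Theorem~\ref{thm:rational-homotopy:arnold-presentation}.

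The main obstacle is twofold. First, the well-definedness of the integration map as a cooperad morphism rests on Kontsevich's vanishing lemmas: the contributions of the boundary faces of $\FMOp_n(r)$, where subsets of points collide, must either vanish by dimension and symmetry arguments on the integrals attached to the hidden faces, or reproduce the cooperad coproduct on the principal faces. Second, the quasi-isomorphism itself is best organised by filtering $\GraphOp_n^c(r)$ by the first Betti number of the graphs, which the edge-contracting differential preserves; the genus-zero (forest) part reduces, through the tripod computation above, to the Arnold presentation and accounts for all of $\PoisOp_n^c(r)$, whereas the positive-genus part must be shown to be acyclic. Establishing this acyclicity is the crux of the argument. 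It follows from the integration computation, and it can be cross-checked against the already-constructed Chevalley--Eilenberg resolution of Proposition~\ref{prop:rational-homotopy:drinfeld-kohno-cohomology-cooperad-qiso}, together with the Koszulness of the Arnold algebra underlying Theorem~\ref{thm:rational-homotopy:drinfeld-kohno-cohomology-algebra-qiso}, which gives an independent verification that $\GraphOp_n^c$ carries the expected cohomology in each arity.
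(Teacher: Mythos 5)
Your proposal matches the paper's proof: the same definition of the morphism (edges to Arnold classes, anything with an internal vertex to zero), the same verification of the chain-map property via the tripod differential reproducing the Arnold relation, and the same check of compatibility with products and cooperad coproducts. For the arity-wise quasi-isomorphism the paper simply cites Kontsevich and Lambrechts--Voli\'c, which is exactly the configuration-space-integral and acyclicity argument you sketch, so your extra material is an outline of the cited references rather than a departure from the paper's route.
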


\begin{proof}
We define a morphism of commutative cochain dg-algebras $\GraphOp_n^c(r)\xrightarrow{\sim}\DGH^*(\FOp_n(r))$
by taking $\gamma\mapsto 0$ when $\gamma$ is an internally connected graph with a non-empty set of internal vertices
and by using an obvious extension of the assignment of the proposition
when we consider graphs with no internal vertices.
We easily check that this map preserves differentials, and hence, gives a well-defined morphism
of commutative cochain dg-algebras -- observe simply that the differential
of~\S\ref{rational-homotopy:graph-cooperad}(\ref{eqn:rational-homotopy:graph-cooperad:differential-example})
is carried to the Arnold relation in $\DGH^*(\DOp_n(3))$.
We refer to the cited reference~\cite{KontsevichMotives} and to~\cite{LambrechtsVolic} for a proof that this map
defines a quasi-isomorphism in each arity $r\in\NN$. Then we can easily check that these quasi-isomorphisms
preserve cooperad structures, and hence, define a quasi-isomorphism
in the category of Hopf cochain dg-cooperads.
\end{proof}

Recall that we set $\DGR\DGOmega^*_{\sharp}(\DOp_n) = \DGOmega_{\sharp}^*(\EOp_n)$ for the topological operad of little $n$-discs $\DOp_n$,
where $\EOp_n$ is any cofibrant model of $E_n$-operad in simplicial sets
such that $\EOp_n(0) = \EOp_n(1) = *$. We have the following result:

\begin{thm}[{B. Fresse and T. Willwacher~\cite[Theorem A']{FresseWillwacher}}]\label{thm:rational-homotopy:en-operads-formality}
We have the relation
\begin{equation*}
\PoisOp_n^c\sim\DGR\DGOmega^*_{\sharp}(\DOp_n),
\end{equation*}
in the category of Hopf cochain dg-cooperads.
\end{thm}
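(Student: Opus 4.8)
The plan is to route the comparison through the graph cooperad $\GraphOp_n^c$, which by Proposition~\ref{prop:rational-homotopy:graph-cohomology-cooperad-qiso} is a cofibrant resolution of $\PoisOp_n^c = \DGH^*(\DOp_n)$ in the category of Hopf cochain dg-cooperads. Since $\PoisOp_n^c$ is already linked to $\GraphOp_n^c$ by a weak-equivalence, it suffices to produce a zigzag of weak-equivalences of Hopf cochain dg-cooperads connecting $\GraphOp_n^c$ to $\DGR\DGOmega_{\sharp}^*(\DOp_n)$. I would first replace $\DOp_n$ by the Fulton--MacPherson operad $\FMOp_n$ of~\S\ref{background:fulton-macpherson-operad}, which is weakly-equivalent to $\DOp_n$ and cofibrant in the relevant category of topological operads, and whose components are compact semialgebraic manifolds (the compactified configuration spaces). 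On such spaces one has a model of forms, the semialgebraic $\mathrm{PA}$-forms $\Omega_{\mathrm{PA}}^{*}(\FMOp_n)$, which is quasi-isomorphic to the Sullivan forms and inherits a Hopf cochain dg-cooperad structure.

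The central construction is Kontsevich's configuration-space-integral morphism. To a graph $\gamma\in\GraphOp_n^c(r)$ one assigns the $\mathrm{PA}$-form on $\FMOp_n(r)$ obtained by integrating, over the positions of the internal vertices, the product of propagator forms $\pi_{ij}^{*}(\mathrm{vol}_{\Sphere^{n-1}})$ attached to the edges, where $\pi_{ij}$ records the direction between two endpoints. I would verify the three defining properties of this map: it is multiplicative for the amalgamated-sum product; it is a chain map, with Stokes' theorem on the fibers converting the edge-contraction differential of graphs into the de Rham differential (the vanishing lemmas of Kontsevich and Lambrechts--Voli\'c rule out the degenerate boundary contributions, tadpoles and low-valence vertices); and, crucially, it is a morphism of cooperads, because the boundary faces of the compactification correspond exactly to the operadic composition of $\FMOp_n$ and the propagators factor accordingly on those faces. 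Evaluating on the single-edge graph recovers the Arnold class $\omega_{ij}$, so the map induces the cohomology isomorphism of Theorem~\ref{thm:rational-homotopy:arnold-presentation} in each arity and is a quasi-isomorphism. This already yields the formality of $\DOp_n$ as a Hopf cochain dg-cooperad \emph{over $\RR$}.

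The hard part, and the genuinely new content of the theorem, is to descend this statement from $\RR$ to $\QQ$ and to phrase it for the rational Sullivan model $\DGR\DGOmega_{\sharp}^*(\DOp_n)$ rather than for $\mathrm{PA}$-forms. The integration map is intrinsically real-analytic, so a priori it only defines a real point of the relevant moduli space of formality morphisms. The strategy is obstruction-theoretic: one transfers the problem to the derived mapping space between the two cofibrant resolutions $\DGC_{CE}^*(\hat{\palg}_n)$ and $\GraphOp_n^c$ of $\PoisOp_n^c$, and shows that the set of homotopy classes of Hopf-cooperad weak-equivalences realizing the identity on cohomology is a torsor under a pro-unipotent group whose Lie algebra is computed by the graph complex $\GC_n$. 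The key point is that this moduli/obstruction problem is defined over $\QQ$ and governed by a pro-nilpotent filtration, so that the existence of the real point produced by configuration-space integrals forces the existence of a rational point, by a descent argument of the same flavor as the one used for Drinfeld's associators in Theorem~\ref{thm:e2-operads:drinfeld-associator-existence}. For $n=2$ the relevant $H^0$ of the graph complex is the graded Grothendieck--Teichm\"uller Lie algebra $\grt$, which both explains the richness of the moduli space and reflects why the rationality is delicate.

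Finally I would assemble the pieces: combining the rational formality morphism with the identification $\DGR\DGOmega_{\sharp}^*(\DOp_n) = \DGOmega_{\sharp}^*(\EOp_n)$ and with Theorem~\ref{rational-homotopy:operads}(\ref{item:rational-homotopy:operads:coherence}), which compares $\DGOmega_{\sharp}^*(\EOp_n)$ arity-wise with the Sullivan forms $\DGOmega^*(\EOp_n(r))$, produces the required zigzag of weak-equivalences of Hopf cochain dg-cooperads between $\PoisOp_n^c$ and $\DGR\DGOmega_{\sharp}^*(\DOp_n)$.
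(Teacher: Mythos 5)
Your proposal is essentially correct, but it follows a genuinely different route from the one the paper attributes to the cited reference. The paper's stated proof of record is the \emph{intrinsic formality} theorem of Fresse--Willwacher: one works directly over $\QQ$ by obstruction theory and shows that \emph{any} Hopf cochain dg-cooperad $\KOp_n$ with $\DGH^*(\KOp_n)\simeq\PoisOp_n^c$, equipped with an involution $J$ acting by $J(\lambda)=-\lambda$ when $4|n$, satisfies $\KOp_n\sim\PoisOp_n^c$; applying this to $\KOp_n=\DGR\DGOmega^*_{\sharp}(\DOp_n)$ gives the theorem for $n\geq 3$, and the case $n=2$ is handled separately via the existence of Drinfeld's associators. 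Your route instead takes the Kontsevich configuration-space-integral morphism $\GraphOp_n^c\xrightarrow{\sim}\DGOmega_{sa}^*(\FMOp_n)$ as the starting point (which the paper only mentions as yielding the \emph{real} statement) and then descends from $\RR$ to $\QQ$ by exhibiting the homotopy classes of formality morphisms inducing the identity on cohomology as a torsor under a pro-unipotent group defined over $\QQ$. This is a legitimate strategy, it uniformly covers $n=2$ without invoking associators, and it avoids the involution hypothesis; what it costs is the full semi-algebraic integration machinery plus two technical points you pass over quickly: (i) the vanishing lemmas and the verification that the integral map is a \emph{strict} morphism of Hopf cochain dg-cooperads (the PA-forms functor, like $\DGOmega^*$, is only weakly monoidal, so one needs the coherence statement the paper cites to rectify $\DGOmega_{sa}^*(\FMOp_n)$ to an object $\DGOmega_{\sharp}^*(\FMOp_n)$ comparable with the rational model after base change to $\RR$), and (ii) the precise construction of the affine $\QQ$-scheme of formality morphisms whose $K$-points compute homotopy classes of weak-equivalences over each extension field $K$, which is what makes ``real point implies rational point'' more than an analogy with the associator argument. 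The paper's intrinsic-formality route buys a purely rational, integral-free argument at the price of the extra involution datum and a separate treatment of $n=2$; yours buys uniformity in $n$ at the price of the analytic input and the descent formalism.
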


\begin{proof}[Explanations]
This theorem asserts that the operad of little $n$-discs is formal in the sense of our operadic counterpart
of the Sullivan rational homotopy theory of spaces. We refer to the cited reference~\cite[Theorem A']{FresseWillwacher}
for a proof of an intrinsic formality theorem which implies this operadic formality result
in the case $n\geq 3$. (In the next statement, we will explain that the case $n=2$ of this theorem
follows from the existence of Drinfeld's associators.)

Let us mention however that the result of this theorem can be deduced from Kontsevich's proof of the formality of $E_n$-operads
when we pass to real coefficients (see~\cite{KontsevichMotives}).
Indeed, the construction of Kontsevich can be used to define a collection
of quasi-isomorphisms $\GraphOp_n^c(r)\xrightarrow{\sim}\DGOmega_{sa}^*(\FMOp_n(r))$,
where we consider the Fulton--MacPherson operad $\FMOp_n$ of~\S\ref{background:fulton-macpherson-operad}
and $\DGOmega_{sa}^*(\FMOp_n(r))$
denotes a cochain dg-algebra of semi-algebraic forms
on the space $\FMOp_n(r)$ (see~\cite{HardtLambrechtsVolic,LambrechtsVolic}). Then one can observe that these morphisms can be associated
to a strict morphism of Hopf cochain dg-cooperads $\GraphOp_n^c\xrightarrow{\sim}\DGOmega_{\sharp}^*(\FMOp_n)$
(by using a general coherence statement of~\cite[Proposition II.12.1.3]{FresseBook}).

The approach of the cited reference~\cite{FresseWillwacher} does not use this constructions
and gives a formality quasi-isomorphism which is defined over the rationals
by using obstruction theory methods. The claim of this reference is that the $E_n$-operads are intrinsically rationally formal for $n\geq 3$
in the sense that every Hopf cochain dg-cooperad $\KOp_n$ which satisfies $\DGH_*(\KOp_n)\simeq\PoisOp_n^c$
and is equipped with an extra-involution operad $J: \KOp_n\rightarrow\KOp_n$
such that $J(\lambda) = -\lambda$ in the case $4|n$
satisfies $\KOp_n\sim\PoisOp_n^c$. We apply this claim to the Hopf cochain dg-cooperad $\KOp_n = \DGR\DGOmega^*_{\sharp}(\DOp_n)$
to get the statement of the theorem.

Let us also mention that we have an extension of this formality result for the morphisms $\DOp_m\rightarrow\DOp_n$
which link the operads of little discs when $n-m\geq 2$ (see~\cite[Theorem C]{FresseWillwacher}).
We go back to this subject in the next section.
\end{proof}

Recall that we set $\DOp_n^{\QQ} = \langle\DGR\DGOmega^*_{\sharp}(\DOp_n)\rangle$
to define a model for the rationalization of the little $n$-discs operad
in topological spaces.
The result of the previous theorem has the following corollary:

\begin{cor}\label{cor:rational-homotopy:graph-cooperad:formality-realization}
We have $\DOp_n^{\QQ} = \langle\PoisOp_n^c\rangle$, for any $n\geq 2$, where we consider the image of the dual cooperad of the $n$-Poisson operad $\PoisOp_n^c$
under the operadic upgrading of the Sullivan realization functor $\langle-\rangle$.\qed
\end{cor}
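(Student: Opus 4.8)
The plan is to deduce this corollary formally from the formality theorem that immediately precedes it. Recall that, by construction, we set $\DOp_n^{\QQ} = \langle\DGR\DGOmega^*_{\sharp}(\DOp_n)\rangle$, where $\langle-\rangle$ denotes the left derived functor of the operadic Sullivan realization $\DGG_{\bullet}\colon\dg^*\Hopf\Op_{*1}^c\rightarrow\sSet\Op_{*1}^{op}$. Meanwhile, Theorem~\ref{thm:rational-homotopy:en-operads-formality} provides a zigzag of weak-equivalences of Hopf cochain dg-cooperads $\PoisOp_n^c\sim\DGR\DGOmega^*_{\sharp}(\DOp_n)$. Thus it suffices to verify that the derived realization functor $\langle-\rangle$ carries this zigzag to a zigzag of weak-equivalences of the realizations, so that $\langle\PoisOp_n^c\rangle\sim\langle\DGR\DGOmega^*_{\sharp}(\DOp_n)\rangle = \DOp_n^{\QQ}$, which is precisely the identification asserted by the corollary (the equality sign being understood as an equivalence in the homotopy category).

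The key step is therefore the observation that $\langle-\rangle$ preserves weak-equivalences. This follows from the general formalism of model categories: since $(\DGG_{\bullet},\DGOmega_{\sharp}^*)$ is a Quillen adjunction by Theorem~\ref{rational-homotopy:operads}(\ref{item:rational-homotopy:operads:PL-form-functor}), the left adjoint $\DGG_{\bullet}$ sends weak-equivalences between cofibrant objects to weak-equivalences (Ken Brown's lemma), and hence its left derived functor $\langle-\rangle$ descends to the homotopy categories and sends any weak-equivalence to a weak-equivalence. Concretely, to evaluate $\langle\PoisOp_n^c\rangle$ one uses the explicit cofibrant resolution $\DGC_{CE}^*(\hat{\palg}_n)\xrightarrow{\sim}\PoisOp_n^c$ of Proposition~\ref{prop:rational-homotopy:drinfeld-kohno-cohomology-cooperad-qiso}, which lies in the connected category $\dg^*\Hopf\Op_{*1}^c$ since $\hat{\palg}_n(1) = 0$; one may equally use the graph resolution $\GraphOp_n^c\xrightarrow{\sim}\PoisOp_n^c$, provided one works in the extended category $\dg^*\Hopf\Op_{*N}^c$ over which the realization functor is still defined. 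In either case $\langle\PoisOp_n^c\rangle = \DGG_{\bullet}(\DGC_{CE}^*(\hat{\palg}_n))$ in the homotopy category.

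I expect the entire substance of the statement to reside in Theorem~\ref{thm:rational-homotopy:en-operads-formality}, which supplies the deep formality input; the corollary itself is a purely formal consequence of the definitions and of the preservation of weak-equivalences by a derived left Quillen functor. The only point demanding care is to ensure that the arity-wise realization of $\PoisOp_n^c$ genuinely models the rationalization of the spaces $\DOp_n(r)$: for this one invokes Theorem~\ref{thm:rational-homotopy:operad-rationalization}, whose finiteness hypothesis is fulfilled because each cohomology module $\DGH^*(\DOp_n(r))\cong\PoisOp_n^c(r)$ is finite dimensional in every degree. With these verifications in place, applying $\langle-\rangle$ to the formality zigzag yields the desired relation $\DOp_n^{\QQ} = \langle\PoisOp_n^c\rangle$ for every $n\geq 2$.
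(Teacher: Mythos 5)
Your proposal is correct and follows essentially the same route as the paper: the corollary is obtained by applying the derived realization functor $\langle-\rangle$ to the formality zigzag of Theorem~\ref{thm:rational-homotopy:en-operads-formality}, with the Chevalley--Eilenberg and graph cooperads serving as the explicit cofibrant resolutions used to compute $\langle\PoisOp_n^c\rangle$. Your added justifications (Ken Brown's lemma for the preservation of weak-equivalences, and the finiteness hypothesis of Theorem~\ref{thm:rational-homotopy:operad-rationalization} to identify the arity-wise realization with the Sullivan rationalization) correctly fill in details the paper leaves implicit.
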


\begin{proof}[Explanations]
We just use the implication $\PoisOp_n^c\sim\DGR\DGOmega^*_{\sharp}(\DOp_n)\Rightarrow\langle\PoisOp_n^c\rangle\sim\langle\DGR\DGOmega^*_{\sharp}(\DOp_n)\rangle$
to get the result of this corollary.
This result, together with the observations of Proposition~\ref{prop:rational-homotopy:drinfeld-kohno-cohomology-cooperad-qiso}
and Proposition~\ref{prop:rational-homotopy:graph-cohomology-cooperad-qiso},
implies that we can take either $\langle\PoisOp_n^c\rangle = \DGG_{\bullet}(\DGC_{CE}^*(\hat{\palg}_n))$
or $\langle\PoisOp_n^c\rangle = \DGG_{\bullet}(\GraphOp_n^c)$
to get a model of the rationalization $\DOp_n^{\QQ}$.

We can observe that we have an identity $\DGG_{\bullet}(\DGC_{CE}^*(\hat{\palg}_n(r))) = \DGMC_{\bullet}(\hat{\palg}_n(r))$, for each $r\in\NN$,
where we consider a Maurer--Cartan space associated to the complete Lie algebra $\hat{\palg}_n(r)$ (we review the definition
of this construction in the next sections). Hence, the results of this section
gives a simple algebraic model of the rational homotopy type of $E_n$-operads.
Let us mention that, in the case $n=2$, we have an identity $\DGC_{CE}^*(\hat{\palg})\sim\DGB(\CD\sphat_{\QQ})$,
where we consider the chord diagram operad of~\S\ref{e2-operads:chord-diagram-operad}
(recall also that we use the notation $\hat{\palg} = \hat{\palg}_2$
for the ungraded Drinfeld--Kohno Lie algebra operad which occurs in this case $n=2$).
Thus, since we have on the other hand $\DOp_2^{\QQ} = \DGB(\PaB\sphat_{\QQ})$ (see~\S\ref{sec:e2-operads}),
we can deduce the existence of a weak-equivalence $\DOp_2^{\QQ}\sim\langle\PoisOp_n^c\rangle$
from the operadic interpretation of Drinfeld's associators
given in~\S\ref{e2-operads:chord-diagram-operad} (see~\cite[\S II.14.2]{FresseBook}).
\end{proof}

We now consider a counterpart in the category of dg-modules $\dg^*\Mod$ of the formality result
of Theorem~\ref{thm:rational-homotopy:en-operads-formality}.
We use the notation $\DGC_*(-)$ for both the singular complex functor from the category of topological spaces to the category of dg-modules
and for the standard normalized chain complex functor on simplicial sets.
These functors are lax symmetric monoidal and therefore carry operads in topological spaces (respectively, in simplicial sets)
to operads in dg-modules.
Furthermore, in the case of a cofibrant operad in simplicial sets $\ROp$, we have the duality relation $\DGOmega_{\sharp}^*(\ROp)^{\vee}\sim\DGC_*(\ROp)$
in the category of dg-operads $\dg\Op_*$ when we consider the dual in dg-modules
of the Hopf cochain dg-cooperad $\DGOmega_{\sharp}^*(\ROp)$
of Theorem~\ref{thm:rational-homotopy:en-operads-formality}.
Therefore, the result of Theorem~\ref{thm:rational-homotopy:en-operads-formality}
implies the following statement,
which was also obtained by the authors cited in this statement
by other method:

\begin{thm}[{D. Tamarkin~\cite{TamarkinFormality}, M. Kontsevich~\cite{KontsevichMotives}}]\label{thm:rational-homotopy:graph-cooperad:chain-formality}
We have the relation
\begin{equation*}
\PoisOp_n\sim\DGC_*(\DOp_n),
\end{equation*}
in the category of dg-operads.
\end{thm}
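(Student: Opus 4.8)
The plan is to deduce this statement from the formality theorem for Hopf cochain dg-cooperads (Theorem~\ref{thm:rational-homotopy:en-operads-formality}) by passing to the linear dual. Recall that the dualization functor $(-)^{\vee} = \Hom_{\dg\Mod}(-,\QQ)$ carries a Hopf cochain dg-cooperad to a dg-operad, since it reverses the direction of the composition coproducts $\circ_i^*$ to produce composition products $\circ_i$, while the commutative algebra structure on each component dualizes to a cocommutative coalgebra structure that we forget when we retain only the underlying dg-operad. Over the field $\QQ$ this functor is exact and hence preserves quasi-isomorphisms, at least on objects whose cohomology is degreewise of finite dimension. This finiteness condition is fulfilled by all the operads and cooperads at hand, since $\DGH_*(\DOp_n(r))\cong\PoisOp_n(r)$ is of finite rank in each arity and each degree by Theorem~\ref{thm:background:little-discs-homology} and Proposition~\ref{thm:rational-homotopy:little-discs-operad-cohomology}.

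First I would identify the two sides of the dualized relation. On the one hand, the $n$-Poisson cooperad $\PoisOp_n^c$ is by definition the arity-wise dual of $\PoisOp_n$, and since $\PoisOp_n(r)$ is a graded module of finite rank in each arity, the double dual returns the original object, so that $(\PoisOp_n^c)^{\vee}\cong\PoisOp_n$ as dg-operads with trivial differential. On the other hand, the duality relation recorded in the paragraph preceding the statement gives, for a cofibrant model $\EOp_n$ of the $E_n$-operad in simplicial sets with $\EOp_n(0)=\EOp_n(1)=*$, a weak-equivalence $\DGOmega_{\sharp}^*(\EOp_n)^{\vee}\sim\DGC_*(\EOp_n)$ in the category of dg-operads $\dg\Op_*$. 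Since $\DGR\DGOmega_{\sharp}^*(\DOp_n) = \DGOmega_{\sharp}^*(\EOp_n)$ by definition, and since $\DGC_*(-)$ is a lax symmetric monoidal functor that preserves weak-equivalences and hence carries the equivalence $\EOp_n\sim\DOp_n$ to a weak-equivalence $\DGC_*(\EOp_n)\sim\DGC_*(\DOp_n)$ of dg-operads, I would obtain $(\DGR\DGOmega_{\sharp}^*(\DOp_n))^{\vee}\sim\DGC_*(\DOp_n)$.

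Combining these identifications, the application of $(-)^{\vee}$ to the zigzag of weak-equivalences of Theorem~\ref{thm:rational-homotopy:en-operads-formality} yields a zigzag
\begin{equation*}
\PoisOp_n\cong(\PoisOp_n^c)^{\vee}\sim(\DGR\DGOmega_{\sharp}^*(\DOp_n))^{\vee}\sim\DGC_*(\DOp_n)
\end{equation*}
in the category of dg-operads, which is exactly the asserted relation.

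The main obstacle I anticipate is the careful justification of the duality relation $\DGOmega_{\sharp}^*(\ROp)^{\vee}\sim\DGC_*(\ROp)$ together with the verification that dualization is compatible with the Hopf and operadic structures throughout the zigzag: one must check that linear duality intertwines the composition coproducts $\circ_i^*$ of each cooperad with the composition products $\circ_i$ of the dual operad, and that the degreewise finiteness hypotheses hold at each vertex of the zigzag so that no quasi-isomorphism is destroyed upon dualizing. An alternative, more hands-on route would be to dualize instead the explicit quasi-isomorphism $\GraphOp_n^c\xrightarrow{\sim}\PoisOp_n^c$ of Proposition~\ref{prop:rational-homotopy:graph-cohomology-cooperad-qiso} and to relate $(\GraphOp_n^c)^{\vee}$ to the chains $\DGC_*(\FMOp_n)$ on the Fulton--MacPherson operad via Kontsevich's configuration-space integrals; this recovers Kontsevich's and Tamarkin's original constructions over $\RR$ and, after descent to the rationals, over $\QQ$.
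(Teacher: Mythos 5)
Your proposal is correct and follows essentially the same route as the paper: the text immediately preceding the theorem deduces it from Theorem~\ref{thm:rational-homotopy:en-operads-formality} via the duality relation $\DGOmega_{\sharp}^*(\ROp)^{\vee}\sim\DGC_*(\ROp)$ and arity-wise finite-dimensionality, exactly as you do, while the proof environment itself only recalls the original arguments of Tamarkin (associators, $n=2$) and Kontsevich (semi-algebraic forms, real coefficients), which your closing paragraph also mentions as an alternative.
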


\begin{proof}
Let us simply mention that Tamarkin's proof of this result, which works in the case $n=2$, relies on the correspondence between formality equivalences
and associators, whereas Kontsevich's proof, which works for every $n\geq 2$ but requires to pass to real coefficients,
relies on the definition of semi-algebraic forms associated
to graphs (as we explain in our survey of Theorem~\ref{thm:rational-homotopy:en-operads-formality}).
\end{proof}

This result is exactly the formality theorem mentioned in the introduction of this paper for the class of $E_n$-operads in dg-modules.

\section{The rational homotopy of mapping spaces on the operads of little discs}\label{sec:mapping-spaces-homotopy}
We now tackle the main objective of this paper, namely the computation of the homotopy of the mapping spaces $\Map_{\Top\Op}^h(\DOp_m,\DOp_n^{\QQ})$
and of the homotopy automorphism spaces $\Aut_{\Op}^h(\DOp_n^{\QQ})$, for all $n\geq 2$.
Thus, we aim to generalize the computation carried out in~\S\ref{sec:e2-operads}
in the case of the automorphism space $\Aut_{\Op}^h(\DOp_2^{\QQ})$.
In the case of the mapping spaces $\Map_{\Top\Op}^h(\DOp_m,\DOp_n^{\QQ})$,
we are also going to check that we have the relation $\Map_{\Top\Op}^h(\DOp_m,\DOp_n^{\QQ})\sim\Map_{\Top\Op}^h(\DOp_m,\DOp_n)^{\QQ}$
when $n-m\geq 3$, so that the results explained in this section
gives a full computation of the rational homotopy of the mapping spaces $\Map_{\Top\Op}^h(\DOp_m,\DOp_n)$
that occur in the operadic description of the embedding spaces $\overline{\Emb}_c(\RR^m,\RR^n)$
in~\S\ref{sec:background}.

To carry out these computations, we use the graph complex model $\GraphOp_n^c$ of the rational homotopy of the operad $\DOp_n$.
Hence, we naturally obtain, as a main outcome, a graph complex description
of the homotopy the spaces $\Map_{\Top\Op}^h(\DOp_m,\DOp_n^{\QQ})$ and $\Aut_{\Op}^h(\DOp_n^{\QQ})$.
In the case of the mapping spaces $\Map_{\Top\Op}^h(\DOp_m,\DOp_n^{\QQ})$,
we express the result as the Maurer--Cartan space $\DGMC_{\bullet}(\HGC_{m n})$
associated to a Lie dg-algebra of hairy graphs $\HGC_{m n}$. We explain the definition of this object first
and we explain our computation afterwards.

In the case of the automorphism spaces $\Aut_{\Op}^h(\DOp_n^{\QQ})$, we get that our object is homotopy equivalent to a cartesian product
of Eilenberg--MacLane spaces (like any $H$-group
in rational homotopy theory). Thus, we can focus on the computation of the homotopy groups in this case.
We give a description of these groups in terms of the homology of a non-hairy graph complex $\GC_n$
of which we also explain the definition beforehand.
This graph complex $\GC_n$ is a graded version of a complex introduced by Kontsevich in~\cite{KontsevichSymplectic},
and therefore, this complex is usually called the Kontsevich graph complex in the literature.

\subsubsection{The hairy graph complex}\label{mapping-spaces-homotopy:hairy-graph-complex}
The hairy graph complex $\HGC_{m n}$ explicitly consists of formal series of connected graphs with internal vertices $\bullet$,
internal edges $\bullet\!\!-\!\!\bullet$, which link internal vertices together,
and external edges (the hairs) $\bullet\!\!-$, which are open at one extremity,
as in the following examples:
\begin{equation}\label{eqn:mapping-spaces-homotopy:hairy-graph-complex:picture}
\begin{tikzpicture}[scale=.5]
\draw (0,0) circle (1);
\draw (-180:1) node[int]{} -- +(-1.2,0);
\end{tikzpicture},
\quad\begin{tikzpicture}[scale=.6]
\node[int] (v) at (0,0){};
\draw (v) -- +(90:1) (v) -- ++(210:1) (v) -- ++(-30:1);
\end{tikzpicture},
\quad\begin{tikzpicture}[scale=.5]
\node[int] (v1) at (-1,0){};\node[int] (v2) at (0,1){};\node[int] (v3) at (1,0){};\node[int] (v4) at (0,-1){};
\draw (v1)  edge (v2) edge (v4) -- +(-1.3,0) (v2) edge (v4) (v3) edge (v2) edge (v4) -- +(1.3,0);
\end{tikzpicture},
\quad\begin{tikzpicture}[scale=.6]
\node[int] (v1) at (0,0){};\node[int] (v2) at (180:1){};\node[int] (v3) at (60:1){};\node[int] (v4) at (-60:1){};
\draw (v1) edge (v2) edge (v3) edge (v4) (v2)edge (v3) edge (v4)  -- +(180:1.3) (v3)edge (v4);
\end{tikzpicture}.
\end{equation}
This complex $\HGC_{m n}$ is equipped with a lower grading. The degree of a graph $\gamma\in\HGC_{m n}$
is determined by assuming that each vertex contributes by~$\deg(\bullet) = n$,
that each internal edge contributes by $\deg(\bullet\!\!-\!\!\bullet) = 1-n$,
that each hair contributes by $\deg(\bullet\!\!-) = m-n+1$,
and by adding a global degree shift by $-m$.
Thus, we have $\deg(\gamma) = n v + (1-n) e + (m-n+1) h - m$, where $v$ denotes the number of internal vertices,
the letter $e$ denotes the number of internal edges
and $h$ denotes the number of hairs of the graph $\gamma\in\HGC_{m n}$.
The differential of the hairy graph complex is defined by the blow-up of internal vertices:
\begin{equation}\label{eqn:mapping-spaces-homotopy:hairy-graph-complex:differential}
\delta\;\vcenter{\xymatrix@!0@C=1em@R=1em@M=0pt{ \ar@{-}[drr] & \ar@{-}[dr] & *{\cdots} & \ar@{-}[dl] & \ar@{-}[dll] \\
&& *{\bullet} && \\
\ar@{-}[urr] & \ar@{-}[ur] & *{\cdots} & \ar@{-}[ul] & \ar@{-}[ull] }}
\;=\;\vcenter{\xymatrix@!0@C=1em@R=1em@M=0pt{ \ar@{-}[drr] & \ar@{-}[dr] & *{\cdots} & \ar@{-}[dl] & \ar@{-}[dll] \\
&& *{\bullet}\ar@{-}[d] && \\
&& *{\bullet} && \\
\ar@{-}[urr] & \ar@{-}[ur] & *{\cdots} & \ar@{-}[ul] & \ar@{-}[ull] }}.
\end{equation}

We equip the hairy graph complex with the Lie bracket such that:
\begin{equation}\label{eqn:mapping-spaces-homotopy:hairy-graph-complex:Lie-bracket}
\left[\vcenter{\xymatrix@!0@C=1em@R=1.5em@M=0pt{ && *+<5pt>{\gamma_1}\ar@{-}[dl]\ar@{-}[dll]\ar@{-}[dr]\ar@{-}[drr] && \\
&& *{\cdots} && }},\vcenter{\xymatrix@!0@C=1em@R=1.5em@M=0pt{ && *+<5pt>{\gamma_2}\ar@{-}[dl]\ar@{-}[dll]\ar@{-}[dr]\ar@{-}[drr] && \\
&& *{\cdots} && }}\right]
\;=\;\sum\pm\vcenter{\xymatrix@!0@C=1em@R=1.5em@M=0pt{ && *+<5pt>{\gamma_1}\ar@{-}[dl]\ar@{-}[dll]\ar@{-}[dr]\ar@{-}[drr] &&&& \\
&& *{\cdots} & *+<5pt>{\gamma_2}\ar@{-}[dl]\ar@{-}[dll]\ar@{-}[dr]\ar@{-}[drr] &&& \\
&&& *{\cdots} &&& }}
- \sum\pm\vcenter{\xymatrix@!0@C=1em@R=1.5em@M=0pt{ && *+<5pt>{\gamma_2}\ar@{-}[dl]\ar@{-}[dll]\ar@{-}[dr]\ar@{-}[drr] &&&& \\
&& *{\cdots} & *+<5pt>{\gamma_1}\ar@{-}[dl]\ar@{-}[dll]\ar@{-}[dr]\ar@{-}[drr] & \\
&&& *{\cdots} &&& }},
\end{equation}
where the first sum runs over the re-connections of a hair of the graph $\gamma_1$ to a vertex of the graph $\gamma_2$,
and similarly in the second sum, with the role of the graphs $\gamma_1$ and $\gamma_2$
exchanged. In the case $m=1$, we have to consider a deformation of this Lie dg-algebra structure
which we call the Shoikhet $L_{\infty}$-structure. We just refer to~\cite{WillwacherShoikhet}
for the explicit definition of this structure. (Recall simply that an $L_{\infty}$-algebra
denotes the structure of a strongly homotopy Lie algebra.)

In the next theorem, we consider the Maurer--Cartan space $\DGMC_{\bullet}(L)$ associated to the Lie dg-algebra $L = \HGC_{m,n}$.
This simplicial set $\DGMC_{\bullet}(L)$ is defined by the sets
of flat $L$-valued PL connections on the simplices $\Delta^n$, $n\in\NN$.
To be more precise, in the definition of this object $\DGMC_{\bullet}(L)$, we generally assume that $L$ forms a complete Lie dg-algebra
with respect to a filtration $L = \DGF_1 L\supset\DGF_2 L\supset\cdots\supset\DGF_k L\supset\cdots$
such that $[\DGF_k L,\DGF_l L]\subset\DGF_{k+l} L$.
In the case $L = \HGC_{m n}$, we assume that $\DGF_k L = \DGF_k\DGH_{m n}$
consists of power series of graphs $\gamma\in\DGH_{m n}$
such that $e - v\geq k$, where $e$ denotes the number of edges and $v$ denotes the number of internal vertices
in $\gamma$.
Then we explicitly set:
\begin{equation}\label{eqn:mapping-spaces-homotopy:hairy-graph-complex:MaurerCartan-space}
\DGMC_n(L) = \bigl\{\omega\in(L\hat{\otimes}\DGOmega^*(\Delta^n))^1\,|\,\delta(\omega) + \frac{1}{2}[\omega,\omega] = 0\}\bigr\},
\end{equation}
for every simplicial dimension $n\in\NN$, where $(L\hat{\otimes}\DGOmega^*(\Delta^n))^1$ denotes the component of upper degree $1$
in the completed tensor product of the Lie dg-algebra $L$ with the Sullivan cochain dg-algebra
of PL forms $\DGOmega^*(\Delta^n)$. The face and degeneracy operators of this simplicial set
are inherited from the simplices.
This construction has a natural extension for $L_{\infty}$-algebras (see for instance~\cite{GetzlerMC}).
We now have the following main result:

\begin{thm}[{B. Fresse, V. Turchin, and T. Willwacher~\cite[Theorem 1]{FresseTurchinWillwacher}}]\label{thm:mapping-spaces-homotopy:hairy-graph-model}
For any $n\geq m\geq 2$, we have the relation:
\begin{equation*}
\Map_{\Top\Op}^h(\DOp_m,\DOp_n^{\QQ})\sim\DGMC_{\bullet}(\HGC_{m n}),
\end{equation*}
where $\HGC_{m n}$ is the hairy graph complex. This relation extends to the case $n>m=1$
when we equip $\HGC_{1 n}$ equipped with the Shoikhet $L_{\infty}$-structure.
\end{thm}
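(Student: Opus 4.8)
The plan is to transport the computation from topological operads to Hopf cochain dg-cooperads, to replace the resulting models by graph cooperads via the formality theorem, and finally to recognize the associated deformation complex as the hairy graph complex.

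First I would use the Quillen adjunction $(\DGG_{\bullet},\DGOmega_{\sharp}^*)$ of Theorem~\ref{rational-homotopy:operads}, together with the identity $\DOp_n^{\QQ}=\langle\DGR\DGOmega^*_{\sharp}(\DOp_n)\rangle$, to turn the topological derived mapping space into a derived mapping space of Hopf cochain dg-cooperads, with the arrows reversed because $\DGOmega^*_{\sharp}$ is contravariant:
\begin{equation*}
\Map_{\Top\Op}^h(\DOp_m,\DOp_n^{\QQ})\sim\Map^h_{\dg^*\Hopf\Op_{*N}^c}(\DGR\DGOmega^*_{\sharp}(\DOp_n),\DGR\DGOmega^*_{\sharp}(\DOp_m)).
\end{equation*}
This reduction relies on the finiteness and nilpotence hypotheses that make $\DOp_n^{\QQ}$ a genuine rationalization, as in Theorem~\ref{thm:rational-homotopy:operad-rationalization}. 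By the formality statement of Theorem~\ref{thm:rational-homotopy:en-operads-formality} we have $\DGR\DGOmega^*_{\sharp}(\DOp_n)\sim\PoisOp_n^c$ and $\DGR\DGOmega^*_{\sharp}(\DOp_m)\sim\PoisOp_m^c$, while Proposition~\ref{prop:rational-homotopy:graph-cohomology-cooperad-qiso} supplies the cofibrant graph resolution $\GraphOp_n^c\xrightarrow{\sim}\PoisOp_n^c$. Using $\GraphOp_n^c$ as a cofibrant source, the mapping space becomes $\Map^h(\GraphOp_n^c,\PoisOp_m^c)$, so that the graph combinatorics (hence the parameter $n$) come from the source, whereas the parameter $m$ enters only through the target $\PoisOp_m^c$.

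The key step is to identify this mapping space with a Maurer--Cartan space. Since $\GraphOp_n^c$ is quasi-free as a Hopf cochain dg-cooperad, with $\GraphOp_n^c(r)=(\Sym(\QQ[-1]\otimes\ICG_n^c(r)),\partial)$ generated by the internally connected graphs $\ICG_n^c$ (the dual of an $L_{\infty}$-algebra), a morphism $\GraphOp_n^c\to\PoisOp_m^c$ is freely determined by its restriction to the generating $\Sigma_r$-collection, subject to compatibility with the differential $\partial$ and the cooperad coproducts. This is exactly a Maurer--Cartan datum in the convolution dg-Lie algebra
\begin{equation*}
\mathfrak{g}_{m n}=\prod_{r}\Hom_{\Sigma_r}(\ICG_n^c(r),\PoisOp_m^c(r)),
\end{equation*}
whose differential is induced by the vertex-splitting part of $\partial$ and whose bracket combines the subgraph-collapse coproducts of $\GraphOp_n^c$ with the operadic composition of $\PoisOp_m$. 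By the general theory of derived mapping spaces out of cofibrant quasi-free objects, the whole simplicial mapping space is then modeled by $\DGMC_{\bullet}(\mathfrak{g}_{m n})$; the completeness needed to define $\DGMC_{\bullet}$ is provided by the filtration $\DGF_k$ counting $e-v\geq k$.

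It remains to recognize $\mathfrak{g}_{m n}$ as the hairy graph complex $\HGC_{m n}$. Evaluating $\Hom_{\Sigma_r}(\ICG_n^c(r),\PoisOp_m^c(r))$ and summing over $r$, the labelled external vertices of the internally connected graphs are paired against the Poisson comonomials of $\PoisOp_m^c$, which turns them into unlabelled hairs, reproduces the degree conventions of \S\ref{mapping-spaces-homotopy:hairy-graph-complex} (internal vertices of degree $n$, internal edges of degree $1-n$, each hair of degree $m-n+1$, and the global shift $-m$), carries the vertex-splitting differential to~\eqref{eqn:mapping-spaces-homotopy:hairy-graph-complex:differential}, and carries the convolution bracket to the hair-reconnection bracket~\eqref{eqn:mapping-spaces-homotopy:hairy-graph-complex:Lie-bracket}. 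The hard part is precisely this last identification: matching the bar/cobar bookkeeping of the convolution complex with the hairy graph operations, with correct signs and degree shifts, and checking that no spurious components appear. For $m=1$ the target $\PoisOp_1^c=\AsOp^c$ is associative rather than Poisson, so the convolution structure is no longer a strict dg-Lie algebra but acquires higher brackets; tracking these should yield the Shoikhet $L_{\infty}$-structure, whose Maurer--Cartan space gives the stated extension to $n>m=1$. A further technical point to verify is that the whole argument is carried out in the extended cooperad category $\dg^*\Hopf\Op_{*N}^c$, since $\GraphOp_n^c(1)\neq\QQ$.
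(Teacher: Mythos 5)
Your opening reduction (Quillen adjunction $(\DGG_{\bullet},\DGOmega_{\sharp}^*)$ plus the formality theorem, with the arrows reversed) is exactly the paper's first step, and taking $\GraphOp_n^c$ rather than $\DGC_{CE}^*(\hat{\palg}_n)$ as the cofibrant source is a legitimate variation: the paper itself passes to $\ICG_n^c\sim\hat{\palg}_n^{\vee}$ at the level of generating collections in its Step~4, and you correctly flag the $\dg^*\Hopf\Op_{*N}^c$ issue. The gap is on the target side. You compute the derived mapping space by taking morphisms into $\PoisOp_m^c$ itself and asserting that they are governed by a Maurer--Cartan equation in $\mathfrak{g}_{m n}=\prod_r\Hom_{\Sigma_r}(\ICG_n^c(r),\PoisOp_m^c(r))$. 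Two things fail here. First, you provide no fibrant replacement of the target and no simplicial framing, whereas the paper needs both: it replaces $\PoisOp_m^c$ by a cooperadic $W$-construction $\DGW^c(\PoisOp_m^c)$ and builds a framing $\DGW^c(\PoisOp_m^c)^{\Delta^{\bullet}}$ out of PL-form factors precisely so that the derived mapping space becomes an actual morphism set. Second, and more structurally, the linearization of a Hopf-cooperad morphism space into a convolution-type $L_{\infty}$-algebra on generators requires the target to be \emph{cofree as a cooperad}: only then is a morphism out of a source that is free as a commutative algebra equivalent to a biderivation datum, i.e.\ a map from algebra generators to cooperad cogenerators. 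Since $\PoisOp_m^c$ is not cofree, compatibility with its composition coproducts is a nonlinear constraint on your generator values, not a Maurer--Cartan equation in your $\mathfrak{g}_{m n}$.

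This is not just a technicality, because the correct deformation complex has a different target than yours: the cogenerators of $\DGW^c(\PoisOp_m^c)$ are (quasi-isomorphic to) the cooperadic cobar construction $\DGB^c(\PoisOp_m^c)$, which by Koszul duality is the operadic suspension $\LambdaOp^m\PoisOp_m$, and it is this $m$-fold suspension --- not $\PoisOp_m^c$ --- that produces hairs of degree $m-n+1$ and the global shift by $-m$ in $\HGC_{m n}$. You point to the ``bar/cobar bookkeeping'' as the hard part, but your complex contains no bar construction to book-keep. The missing steps are precisely: fibrant cofree target with simplicial framing, identification of the morphism space with $\DGMC_{\bullet}(\BiDer(-,-))$, the chain $\mathring{\DGW}{}^c(\PoisOp_m^c)\sim\DGB^c(\PoisOp_m^c)\sim\LambdaOp^m\PoisOp_m$, and the $\Lambda$-diagram ($\Hom_{\dg\Lambda\Mod}$) structure that turns labelled external vertices into unlabelled hairs. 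Once those are inserted, your final identification with $\HGC_{m n}$ and your heuristic for the Shoikhet $L_{\infty}$-structure at $m=1$ line up with the paper's argument.
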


\begin{proof}[Proof (outline)]
The results of the previous section imply that we have the following weak-equivalences:
\begin{align}
\Map_{\Top\Op}^h(\DOp_m,\DOp_n^{\QQ})
& \sim\Map_{\dg^*\Hopf\Op_{*1}^c}^h(\DGR\DGOmega_{\sharp}^*(\DOp_n),\DGR\DGOmega_{\sharp}^*(\DOp_m))\\
& \sim\Map_{\dg^*\Hopf\Op_{*1}^c}^h(\PoisOp_n^c,\PoisOp_m^c),
\end{align}
where we use the Quillen adjunction between the functors $\DGG_{\bullet}(-)$ and $\DGOmega_{\sharp}^*(-)$ in the first equivalence
and the formality result of Theorem~\ref{thm:rational-homotopy:en-operads-formality}
in the second equivalence.

Thus, we can reduce our goal to the problem of computing the homotopy of the mapping space $\Map_{\dg^*\Hopf\Op_{*1}^c}^h(\PoisOp_n^c,\PoisOp_m^c)$
in the category of Hopf cochain dg-cooperads $\dg^*\Hopf\Op_{*1}^c$
in order to reach the objective of this theorem. We use several steps to reduce this computation
to the hairy graph complex.

\textit{Step 1}. To determine the mapping space $\Map_{\dg^*\Hopf\Op_{*1}^c}^h(\PoisOp_n^c,\PoisOp_m^c)$,
we have to pick a cofibrant resolution of the source object $\PoisOp_n^c$
and a fibrant resolution of the target object $\PoisOp_m^c$.
For this purpose, we consider the Chevalley--Eilenberg complex of the complete graded Drinfeld--Kohno Lie algebra operad $\DGC_{CE}^*(\hat{\palg}_n)$,
whose definition is explained in~\S\ref{rational-homotopy:graded-drinfeld-kohno-lie-algebra-operad},
and an analogue of the Boardman-Vogt $W$-construction for Hopf cochain dg-cooperads.
Recall simply that the standard Boardman-Vogt $W$-construction is a cofibrant resolution functor on the category of operads.
Thus, the cooperadic analogue of this construction, which we consider in our computation, is a fibrant resolution functor $\DGW^c(-)$
on the category of Hopf cochain dg-cooperads $\dg^*\Hopf\Op_{*1}^c$,
and we just take the image of the cooperad $\PoisOp_m^c$ under this functor $\DGW^c(-)$ to get a fibrant model of this object
in the category of Hopf cochain dg-cooperads
Then we eventually get the identity:
\begin{equation}
\Map_{\dg^*\Hopf\Op_{*1}^c}^h(\PoisOp_n^c,\PoisOp_m^c) = \Map_{\dg^*\Hopf\Op_{*1}^c}(\DGC_{CE}^*(\hat{\palg}_n),\DGW^c(\PoisOp_m^c)).
\end{equation}
We also have to fix a simplicial framing of the Boardman-Vogt $W$-construction
in order to determine our mapping space
as a morphism set of Hopf cochain dg-cooperads:
\begin{equation}
\Map_{\dg^*\Hopf\Op_{*1}^c}(\DGC_{CE}^*(\hat{\palg}_n),\DGW^c(\PoisOp_m^c)) = \Mor_{\dg^*\Hopf\Op_{*1}^c}(\DGC_{CE}^*(\hat{\palg}_n),\DGW^c(\PoisOp_m^c)^{\Delta^{\bullet}}).
\end{equation}
In~\cite[\S 6]{FresseTurchinWillwacher}, we define such a simplicial framing $\DGW^c(\PoisOp_m^c)^{\Delta^{\bullet}}$
by considering extra PL form factors $\DGOmega^*(\Delta^{\bullet})$
in the definition of our $W$-construction
for Hopf cochain dg-cooperads. We refer to this article for a detailed
account of this construction.

\textit{Step 2}. The $W$-construction $\DGW^c(\KOp)$ of a Hopf cochain dg-cooperad $\KOp$ is cofree as cooperad,
and similarly as regards the simplicial framing $\DGW^c(\KOp)^{\Delta^{\bullet}}$
which we consider in our construction.
To be explicit, we have an identity of the form $\DGW^c(\KOp)^{\Delta^{\bullet}}_{\flat} = \FreeOp^c(\QQ[-1]\otimes\mathring{\DGW}{}^c(\KOp)^{\Delta^{\bullet}})$
when we forget about commutative algebra structures and differentials,
where $\FreeOp^c(-)$ denotes the cofree cooperad functor.
Recall that the Chevalley--Eilenberg complex $\DGC_{CE}^*(\hat{\galg})$ of a complete Lie dg-algebra $\hat{\galg}$
also forms a symmetric algebra when we forget about differentials,
since we have by definition $\DGC_{CE}^*(\hat{\galg}) = (\Sym(\QQ[-1]\otimes\hat{\galg}^{\vee}),\partial)$.
Then, for our mapping space, we have a weak-equivalence:
\begin{equation}
\Map_{\dg\Hopf\Op_{*1}^c}(\DGC_{CE}^*(\hat{\palg}_n),\DGW^c(\PoisOp_m^c))
\sim\DGMC_{\bullet}(\BiDer(\DGC_{CE}^*(\hat{\palg}_n),\DGW^c(\PoisOp_m^c))),
\end{equation}
where $\BiDer(\DGC_{CE}^*(\hat{\palg}_n),\DGW^c(\PoisOp_m^c))$ consists of maps $\theta: \DGC_{CE}^*(\hat{\palg}_n)\rightarrow\DGW^c(\PoisOp_m^c)$
that are coderivations with respect to the cooperad composition coproducts
and derivations with respect to the commutative algebra structure
of our objects.
The idea is that a morphism $\phi_f: \DGC_{CE}^*(\hat{\palg}_n)\rightarrow\DGW^c(\PoisOp_m^c)^{\Delta^{\bullet}}$
is determined by a collection of equivariant maps $f: \hat{\palg}_n(r)^{\vee}\rightarrow\mathring{\DGW}{}^c(\PoisOp_m^c)^{\Delta^{\bullet}}(r)$, $r\in\NN$,
on the generating and cogenerating objects
of our Hopf cochain dg-cooperads $\DGC_{CE}^*(\hat{\palg}_n)$ and $\DGW^c(\PoisOp_m^c)^{\Delta^{\bullet}}$,
and we have a parallel result for the biderivations $\theta: \DGC_{CE}^*(\hat{\palg}_n)\rightarrow\DGW^c(\PoisOp_m^c)$.
To define the Maurer--Cartan space of our formula, we use that this object $\BiDer(\DGC_{CE}^*(\hat{\palg}_n),\DGW^c(\PoisOp_m^c))$
can be equipped with an $L_{\infty}$-algebra structure
which reflects the constraints that these maps $f: \hat{\palg}_n(r)^{\vee}\rightarrow\mathring{\DGW}{}^c(\KOp)^{\Delta^{\bullet}}(r)$, $r\in\NN$,
have to satisfy in order to ensure that the corresponding Hopf cochain dg-cooperad morphism $\phi_f$
preserves the differentials attached to our objects (we refer to~\cite[\S 3 and \S 6]{FresseTurchinWillwacher} for details).

\textit{Step 3}. Thus, we are now left to computing $\BiDer(\DGC_{CE}^*(\palg_n),\DGW^c(\PoisOp_m^c))$
as an $L_{\infty}$-algebra.

We have not been precise about the structure, underlying a cooperad, which we consider when we assert that the $W$-construction $\DGW^c(\PoisOp_m^c)$
forms a cofree object.
We actually consider the category of diagrams over the category $\Lambda_{>1}$
which has the ordinals $\underline{r} = \{1,\dots,r\}$ such that $r>1$
as objects and the injective maps between such ordinals $u: \{1,\dots,k\}\rightarrow\{1,\dots,l\}$
as morphisms.
Wa also assume that our diagrams are equipped with a coaugmentation over the constant diagram such that $\CstOp(r) = \QQ$,
for every $r>1$.
We can regard such a diagram as a collection $\MOp = \{\MOp(r),r>1\}$ whose terms $\MOp(r)$ are equipped with an action
of the symmetric groups $\Sigma_r$, $r>1$, and with coface operators $\partial^i: \MOp(r-1)\rightarrow\MOp(r)$
which correspond to the injective maps $\partial_i: \{1,\dots,r-1\}\rightarrow\{1,\dots,r\}$
such that $(\partial_i(1),\dots,\partial_i(i-1),\partial_i(i),\dots\partial_i(r-1)) = (1,\dots,i-1,i+1,\dots,r)$.
We can also determine the coaugmentation over the constant diagram by giving a collection of morphisms $\eta: \QQ\rightarrow\MOp(r)$,
whose image is left invariant under the symmetric structure and the action of the coface operators
on our object $\MOp$.

To a cooperad $\COp\in\dg^*\Op_{*1}^c$, we can associate the collection $U\COp = \{\COp(r),r>1\}$
equipped with the coface operators $\partial^i: \COp(r-1)\rightarrow\COp(r)$
given by the composition coproducts $\circ_i^*: \COp(r-1)\rightarrow\COp(r)\otimes\COp(0)$
associated to our object,
whereas the coaugmentations $\eta: \QQ\rightarrow\COp(r)$ represent the image of the factor $\QQ = \COp(0)$ into $\COp(r)$
which we may obtain by applying an $r-1$-fold composite of such composition coproducts
on the term $\COp(0)$ of our cooperad.
The cofree cooperad $\FreeOp^c(-)$ which we consider in our study forms a right adjoint
of this forgetful functor $U: \COp\mapsto U\COp$.
In what follows, we also deal with an extension of this category of coaugmented $\Lambda$-diagrams
in order to allow a term in arity one.
In all cases, we use the notation $\Hom_{\dg\Lambda\Mod}(\MOp,\NOp)$ for an enriched hom-object
associated to our category of coaugmented diagrams
in the category of dg-modules.
The correspondence between the biderivations $\theta: \DGC_{CE}^*(\hat{\palg}_n)\rightarrow\DGW^c(\PoisOp_m^c)$
and the collections of equivariant maps $f: \hat{\palg}_n(r)^{\vee}\rightarrow\mathring{\DGW}{}^c(\PoisOp_m^c)(r)$, $r\in\NN$,
which we use in the previous step
is equivalent to an isomorphism of dg-modules:
\begin{equation}
\BiDer(\DGC_{CE}^*(\palg_n),\DGW^c(\PoisOp_m^c))\simeq\Hom_{\dg\Lambda\Mod}(\hat{\palg}_n,\mathring{\DGW}{}^c(\PoisOp_m)),
\end{equation}
where we consider this enriched dg-hom object on the category of coaugmented $\Lambda$-diagrams $\Hom_{\dg\Lambda\Mod}(-,-)$
on the right hand side.

To go further, we use that the cogenerating object of our $W$-construction $\mathring{\DGW}{}^c(\KOp)$
is quasi-isomorphic to an operadic version of the classical cobar construction
of coalgebras. For the $m$-Poisson cooperad $\KOp = \PoisOp_m^c$,
we also have a Koszul duality result which asserts that this cobar construction $\DGB^c(\PoisOp_m^c)$
satisfies the quasi-isomorphism relation $\DGB^c(\PoisOp_m^c)\sim\LambdaOp^m\PoisOp_m$,
where $\LambdaOp$ denotes an operadic suspension operation.
Then, at the enriched dg-hom object level, we have a chain of quasi-isomorphisms
\begin{multline}
\Hom_{\dg\Lambda\Mod}(\hat{\palg}_n^{\vee},\mathring{\DGW}{}^c(\PoisOp_m^c))
\sim\Hom_{\dg\Lambda\Mod}(\hat{\palg}_n^{\vee},\DGB^c(\PoisOp_m^c))\\
\sim\Hom_{\dg\Lambda\Mod}(\hat{\palg}_n^{\vee},\LambdaOp^m\PoisOp_m),
\end{multline}
which is induced by this chain of weak-equivalences $\mathring{\DGW}{}^c(\PoisOp_m^c)\sim\DGB^c(\PoisOp_m^c)\sim\LambdaOp^m\PoisOp_m$
between our objects.

\textit{Step 4}. To get graphs, we use that the weak-equivalence relations $\GraphOp_n^c\sim\PoisOp_n^c\sim\DGC_{CE}^*(\hat{\palg}_n)$
in the category of Hopf dg-cooperads give rise a weak-equivalence relation $\ICG_n^c\sim\hat{\palg}_n^{\vee}$
in our category of coaugmented $\Lambda$-diagrams in dg-modules
when we pass to the generating objects of the symmetric algebras $\GraphOp_n^c(r) = (\Sym(\QQ[-1]\otimes\ICG_n^c(r)),\partial)$
and $\DGC_{CE}^*(\hat{\palg}_n(r)) = (\Sym(\QQ[-1]\otimes\hat{\palg}_n(r)^{\vee}),\partial)$.
This weak-equivalence gives rise to a quasi-isomorphism:
\begin{equation}
\Hom_{\dg\Lambda\Mod}(\hat{\palg}_n^{\vee},\LambdaOp^m\PoisOp_m)\sim\Hom_{\dg\Lambda\Mod}(\ICG_n^c,\LambdaOp^m\PoisOp_m)
\end{equation}
at the level of our dg-hom objects.
Let $\mu_r = \mu_r(x_1,\dots,x_r)$ denote the element such that $\mu_r(x_1,\dots,x_r) = x_1\cdots x_r$
in the $m$-Poisson operad $\PoisOp_m$. We transport this element to the suspension $\LambdaOp^m\PoisOp_m$
by using that this operad is just defined by a appropriate degree shift of the $m$-Poisson operad
in each arity and we consider the natural map
\begin{equation}
\Phi: \HGC_{m n}\rightarrow\Hom(\ICG_n^c,\LambdaOp^m\PoisOp_m)
\end{equation}
such that $\Phi(\gamma): \alpha\mapsto\langle\gamma,\alpha\rangle \mu_r$ for any graph $\gamma\in\HGC_{m n}$,
where $\langle-,-\rangle$ is the obvious pairing between the hairy graph and the internally connected graphs
with univalent external vertices in $\ICG_n^c$.
To be explicit, for a hairy graph $\gamma\in\HGC_{m n}$, we set $\langle\gamma,\alpha\rangle = \pm 1$
if $\alpha\in\ICG_n^c(r)$ is a graph with univalent external vertices of the same shape as $\gamma$,
and $\langle\gamma,\alpha\rangle = 0$ otherwise.

We check that the composite of this map with the mappings of the previous steps gives a quasi-isomorphism of $L_{\infty}$-algebras
\begin{equation}
\HGC_{m n}\sim\BiDer(\DGC_{CE}^*(\palg_n),\DGW^c(\PoisOp_m^c))
\end{equation}
and the conclusion of the theorem follows.
\end{proof}

The result of this theorem has the following corollary:

\begin{cor}\label{cor:mapping-spaces-homotopy:homotopy-groups}
For any $n\geq m\geq 2$ (and for $n>m=1$), we have the identity:
\begin{equation*}
\pi_*(\Map_{\Top\Op}^h(\DOp_m,\DOp_n^{\QQ}),\omega) = H_{*-1}(\HGC_{m n}^{\omega}),
\end{equation*}
for any $\omega\in\DGMC_0(\HGC_{m n})$, where $\HGC_{m n}^{\omega}$ is the complex $\HGC_{m n}$
equipped with the twisted differential $\delta_{\omega} = \delta + [\omega,-] + (\text{extra terms in the $L_{\infty}$-case})$.
\end{cor}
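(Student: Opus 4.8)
The plan is to deduce the statement from Theorem~\ref{thm:mapping-spaces-homotopy:hairy-graph-model} together with the standard computation of the homotopy groups of a Maurer--Cartan space. First I would invoke the weak-equivalence $\Map_{\Top\Op}^h(\DOp_m,\DOp_n^{\QQ})\sim\DGMC_{\bullet}(\HGC_{m n})$ of that theorem. Since a zigzag of weak-equivalences of simplicial sets induces a bijection on $\pi_0$ and isomorphisms on the higher homotopy groups taken at corresponding base points, it suffices to compute $\pi_*(\DGMC_{\bullet}(\HGC_{m n}),\omega)$ for a vertex $\omega\in\DGMC_0(\HGC_{m n})$, where $\HGC_{m n}$ carries its Lie dg-algebra structure (respectively the Shoikhet $L_{\infty}$-structure when $m=1$). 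The completeness needed to make $\DGMC_{\bullet}(-)$ a well-behaved Kan complex is supplied by the filtration $\DGF_k\HGC_{m n}$ by the quantity $e-v$ introduced in~\S\ref{mapping-spaces-homotopy:hairy-graph-complex}, which satisfies $[\DGF_k,\DGF_l]\subset\DGF_{k+l}$.

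Next I would apply the general principle that the homotopy groups of the Maurer--Cartan space of a complete filtered Lie dg-algebra (or $L_{\infty}$-algebra) $L$, based at a Maurer--Cartan element $\omega$, are given by the homology of the algebra twisted at $\omega$. Recall that twisting replaces the differential $\delta$ by $\delta_{\omega} = \delta + [\omega,-]$ (together with the contributions of the higher brackets evaluated repeatedly on $\omega$ in the $L_{\infty}$-case); the Maurer--Cartan equation for $\omega$ guarantees $\delta_{\omega}^2 = 0$, so that $\HGC_{m n}^{\omega} = (\HGC_{m n},\delta_{\omega})$ is again a complete filtered Lie dg-algebra, now with the zero element as its distinguished Maurer--Cartan point. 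The translation $\eta\mapsto\eta-\omega$ (suitably corrected by the higher brackets in the $L_{\infty}$-setting) identifies $\DGMC_{\bullet}(\HGC_{m n})$ based at $\omega$ with $\DGMC_{\bullet}(\HGC_{m n}^{\omega})$ based at $0$. The computation at the zero base point is then the standard Maurer--Cartan homotopy-group formula (see~\cite{GetzlerMC}), which yields $\pi_n(\DGMC_{\bullet}(L),0)\cong H_{n-1}(L)$ for $n\geq 1$; the degree shift by one reflects the fact that Maurer--Cartan elements lie in upper degree $1$, so that the relevant tangent complex is a shift of $L$. Combining these identifications gives the claimed equality $\pi_*(\Map_{\Top\Op}^h(\DOp_m,\DOp_n^{\QQ}),\omega)\cong H_{*-1}(\HGC_{m n}^{\omega})$, and unwinding the definitions confirms that the twisted differential is exactly the $\delta_{\omega}$ displayed in the statement.

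The main obstacle I expect is the careful justification of the homotopy-group formula in the $L_{\infty}$-case relevant to $n>m=1$, and of the twisting isomorphism at a general base point $\omega$: one must check that the nonlinear gauge/translation argument genuinely identifies the based homotopy types and that convergence is controlled by the $e-v$ filtration. In the strict Lie case the twist $\eta\mapsto\eta-\omega$ is elementary, but in the Shoikhet $L_{\infty}$-setting one has to track the infinitely many higher brackets, verify that the twisted brackets again satisfy the $L_{\infty}$-relations, and confirm that the filtration degrees are preserved so that completeness is maintained. Once these points are secured, the corollary follows formally from the identifications above.
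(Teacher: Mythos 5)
Your argument is exactly the paper's: the corollary is deduced from Theorem~\ref{thm:mapping-spaces-homotopy:hairy-graph-model} combined with the general formula for the homotopy groups of a Maurer--Cartan space of a complete filtered Lie (or $L_\infty$-) dg-algebra, for which the paper simply cites~\cite{Berglund} where you sketch the translation-to-the-twisted-algebra argument. The proposal is correct and follows the same route.
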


\begin{proof}[Explanations]
The identity of this statement follows from the result of Theorem~\ref{thm:mapping-spaces-homotopy:hairy-graph-model}
and from a general result about the homotopy groups of Maurer--Cartan spaces $\DGMC_{\bullet}(L)$
for which we refer to~\cite{Berglund}.
\end{proof}

Let us mention that a computation of the rational homotopy groups of the embedding spaces $\overline{\Emb}_c(\RR^m,\RR^n)$,
analogous to the result established in this corollary, is given in~\cite{AroneTurchinGraphs}
(see also~\cite{LambrechtsTurchin} for the case $m=1$ of these computations).
These previous computations are based on the interpretation
in terms of mapping spaces of operadic bimodules
of the Goodwillie--Weiss tower of the embedding spaces $\overline{\Emb}_c(\RR^m,\RR^n)$
(or of the equivalent interpretation of the Goodwillie--Weiss tower in terms of Sinha's cosimplicial model in the case $m=1$).
In~\cite{LambrechtsTurchinVolicHomology}, the formality of $E_n$-operads in chain complexes
is also used to get a description of the homology of the embedding spaces $\overline{\Emb}_c(\RR^1,\RR^n)$
in terms of a Hochschild cohomology theory for operads (we apply this Hochschild cohomology theory to the $n$-Poisson operad).
The graph operad model of the $n$-Poisson can also be used to deduce a graph complex model
of the homology of the embedding space $\overline{\Emb}_c(\RR^1,\RR^n)$
from this algebraic approach.

In fact, we can use the result of the above corollary and the equivalence between the embedding space $\overline{\Emb}_c(\RR^m,\RR^n)$
and the $m+1$-fold iterated loop space of the operadic mapping space $\Map_{\Top\Op}^h(\DOp_m,\DOp_n)$
given in Theorem~\ref{thm:background:embedding-operad-model}
to get applications of the result of Theorem~\ref{thm:mapping-spaces-homotopy:hairy-graph-model}
in the theory embedding spaces. For this purpose, we also use the following theorem:

\begin{thm}[{B. Fresse, V. Turchin, and T. Willwacher~\cite[Theorem 15]{FresseTurchinWillwacher}}]\label{cor:mapping-spaces-homotopy:rationalization}
In the case $n-m\geq 3$, the space $\Map_{\Top\Op}^h(\DOp_m,\DOp_n)$ is $n-m-1$ connected, and we moreover have the relation:
\begin{equation*}
\Map_{\Top\Op}^h(\DOp_m,\DOp_n)^{\QQ}\sim\Map_{\Top\Op}(\DOp_m,\DOp_n^{\QQ})
\end{equation*}
in the homotopy category of spaces.
\end{thm}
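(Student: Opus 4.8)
The plan is to prove the two assertions in sequence, using the connectivity bound as the technical input that makes the nilpotent rational homotopy machinery available for the comparison of mapping spaces. I base the derived mapping space at the canonical operad map $\iota\colon\DOp_m\to\DOp_n$ of \S\ref{background:operad-bimodules}. For the connectivity of $\Map_{\Top\Op}^h(\DOp_m,\DOp_n)$ I would argue by obstruction theory along a Moore--Postnikov tower of the operad mapping space, whose successive layers are controlled by an operadic cohomology of the (finite-type) cofibrant operad $\DOp_m$ with coefficients in the homotopy groups of $\DOp_n$. The lowest degree in which such groups survive is governed by the first nonvanishing reduced cohomology of the little discs spaces, the Arnold class $\omega_{ij}$ of degree $n-1$ (Theorem~\ref{thm:rational-homotopy:arnold-presentation}), measured against the arity-$2$ comparison with $\DOp_m(2)\simeq\Sphere^{m-1}$; a degree count pins the first possible contribution at degree $n-m$, giving $(n-m-1)$-connectivity. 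As a cross-check the rational shadow of this bound is visible in Corollary~\ref{cor:mapping-spaces-homotopy:homotopy-groups}, where $\pi_*=H_{*-1}(\HGC_{m n}^{\omega})$, and the delooping relation $\overline{\Emb}_c(\RR^m,\RR^n)\sim\DGOmega^{m+1}\Map_{\Top\Op}^h(\DOp_m,\DOp_n)$ of Theorem~\ref{thm:background:embedding-operad-model} converts a Goodwillie--Weiss connectivity estimate for the embedding space into the same bound. Since $n-m\geq 3$, the upshot I actually need is that the basepoint components are simply connected.

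With simple connectivity in hand, the second assertion becomes an instance of the principle that rationalization commutes with mapping out of a finite object. First I would check that the target $\Map_{\Top\Op}^h(\DOp_m,\DOp_n^{\QQ})$ is already $\QQ$-local: by Theorem~\ref{thm:mapping-spaces-homotopy:hairy-graph-model} it is weakly equivalent to the Maurer--Cartan space $\DGMC_{\bullet}(\HGC_{m n})$ of a complete Lie dg-algebra over $\QQ$, and by Corollary~\ref{cor:mapping-spaces-homotopy:homotopy-groups} its homotopy groups are the $\QQ$-vector spaces $H_{*-1}(\HGC_{m n}^{\omega})$, which are finite-dimensional in each degree; hence each component is a rational space of finite $\QQ$-type. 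It then remains to prove that the map $\eta_*\colon\Map_{\Top\Op}^h(\DOp_m,\DOp_n)\to\Map_{\Top\Op}^h(\DOp_m,\DOp_n^{\QQ})$ induced by the rationalization unit $\eta\colon\DOp_n\to\DOp_n^{\QQ}$ is a rationalization, i.e. an isomorphism on $\pi_*(-)\otimes\QQ$; combined with the $\QQ$-locality of the target this yields the desired weak equivalence.

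The heart of the matter, and the step I expect to be the main obstacle, is exactly this last rational comparison carried out at the operad level. I would establish the operadic analogue of the classical statement $\Map(X,Y)^{\QQ}\sim\Map(X,Y^{\QQ})$ valid for $X$ a finite complex and $Y$ nilpotent of finite type, the role of $X$ being played by the arity-wise finite-type cofibrant operad $\DOp_m$ and the role of $Y$ by $\DOp_n$. Concretely, I would induct up a rationalization-compatible Moore--Postnikov tower of $\DOp_n$, comparing at each stage the integral and rational difference groups. The key finiteness input is that the relevant operadic cohomology of $\DOp_m$ with coefficients in $\pi_k(\DOp_n)$ involves only finitely generated abelian groups in each total degree, because every $\DOp_m(r)$ has finite-type cohomology (Theorem~\ref{thm:rational-homotopy:arnold-presentation}); universal-coefficient comparison then shows these groups commute with $-\otimes\QQ$. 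The connectivity from the first part guarantees that only finitely many layers contribute in each homotopy degree and that no fundamental-group obstructions arise, so the tower converges and $\eta_*$ is a degreewise rational isomorphism. The delicate point throughout is keeping the operadic obstruction theory genuinely compatible with the Sullivan-type rationalization of operads (Theorem~\ref{rational-homotopy:operads} and Theorem~\ref{thm:rational-homotopy:operad-rationalization}); once this compatibility is secured, the formality input of Theorem~\ref{thm:rational-homotopy:en-operads-formality} identifies the rationalized mapping space with $\Map_{\dg^*\Hopf\Op_{*1}^c}^h(\PoisOp_n^c,\PoisOp_m^c)$ and closes the argument.
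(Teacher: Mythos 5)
Your two-step skeleton --- establish the connectivity bound, certify that the target $\Map_{\Top\Op}^h(\DOp_m,\DOp_n^{\QQ})$ is $\QQ$-local of finite $\QQ$-type via Theorem~\ref{thm:mapping-spaces-homotopy:hairy-graph-model} (which is proved independently of the present statement, so this is not circular), and then show that the unit $\eta\colon\DOp_n\to\DOp_n^{\QQ}$ induces a rationalization of mapping spaces --- is the right logic, but your implementation organizes the comparison by a different tower than the paper does, and that choice creates a genuine gap. The paper, following \cite[\S 10]{FresseTurchinWillwacher}, filters the mapping space by arity truncations, $\Map_{\Top\Op}^h(\DOp_m,\DOp_n)=\lim_k\Map_{\Top\Op^{\leq k}}^h(\DOp_m,\DOp_n)$, identifies the successive layers with ordinary mapping/section spaces into the individual spaces $\DOp_n(k)$, and applies Haefliger's classical theorem $\Map(X,Y)^{\QQ}\sim\Map(X,Y^{\QQ})$ fiberwise; the hypothesis $n-m\geq 3$ enters precisely by making the connectivity of the layers grow linearly in $k$, which yields both the stated connectivity of the total space and the convergence needed to pass the rationalization through the limit. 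You instead propose a Moore--Postnikov tower of the \emph{target} operad $\DOp_n$ together with an operadic obstruction theory with coefficients in $\pi_*(\DOp_n)$, i.e.\ you re-prove a Haefliger-type statement operadically rather than reducing to it.

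The concrete problem is your assertion that ``the connectivity from the first part guarantees that only finitely many layers contribute in each homotopy degree.'' For a Postnikov tower of the target this is false as stated: the groups $\pi_k(\DOp_n(r))$ are nonzero for infinitely many $k$ in every arity $r\geq 2$, and a fixed homotopy degree $j$ of the mapping space receives potential contributions from all pairs $(r,k)$ for which $k$ minus the degree of an arity-$r$ generator of a cofibrant model of $\DOp_m$ equals $j$; since those generator degrees grow linearly in $r$, arbitrarily large $k$ (coupled with arbitrarily large $r$) remain in play, and there is no termwise vanishing to invoke. Ruling these contributions out, and controlling $\lim^1$ and the commutation of $\pi_*(-)\otimes_{\ZZ}\QQ$ with the limit, requires a connectivity estimate that couples the arity and the Postnikov stage --- exactly the estimate the arity-truncation tower delivers for free and which your organization obscures. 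Either supply that coupled estimate (at which point you have essentially reconstructed the paper's argument) or switch to the arity tower and quote Haefliger on the fibers. The same remark applies to your connectivity heuristic: the degree count against the Arnold class in degree $n-1$ and $\DOp_m(2)\simeq\Sphere^{m-1}$ correctly predicts $(n-m-1)$-connectivity, but it only becomes a proof once the full tower estimate over all arities is in place.
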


\begin{proof}
We refer to the cited reference~\cite[\S 10]{FresseTurchinWillwacher} for the proof of this statement.
Let us simply mention that an analogous result for spaces is established by Haefliger in~\cite{Haefliger}.
To establish the result of this theorem
we use the tower decomposition $\Map_{\Top\Op}^h(\DOp_m,\DOp_n) = \lim_k\Map_{\Top\Op^{\leq k}}^h(\DOp_m,\DOp_n)$,
where we again consider the truncated category of operads that are defined up to arity $k$.
The idea is to examine the fiberwise rational homotopy of the fibers of this tower in order to reduce the verification of this theorem
to the results obtained by Haefliger in the context of spaces. (We also refer to \cite{Goeppl} for a study of a model
of this tower in the category of dendroidal sets.)
\end{proof}

We examine the rational homotopy of the spaces of homotopy automorphisms to complete the result of this section.
We first explain the definition of the Kontsevich graph complexes $\GC_n$
which occur in this computation.

\subsubsection{The Kontsevich graph complex}\label{mapping-spaces-homotopy:kontsevich-graph-complex}
The definition of the complex $\GC_n$ is the same as the definition of the hairy graph complex $\HGC_{m n}$,
except that we now consider graphs without hairs,
as in the following examples:
\begin{equation}\label{mapping-spaces-homotopy:kontsevich-graph-complex:picture}
\begin{tikzpicture}[baseline=-.65ex, scale=.5]
\node[int] (c) at (0,0){};
\node[int] (v1) at (0:1) {};
\node[int] (v2) at (72:1) {};
\node[int] (v3) at (144:1) {};
\node[int] (v4) at (216:1) {};
\node[int] (v5) at (-72:1) {};
\draw (v1) edge (v2) edge (v5) (v3) edge (v2) edge (v4) (v4) edge (v5)
      (c) edge (v1) edge (v2) edge (v3) edge (v4) (c) edge (v5);
\end{tikzpicture},
\quad\quad\begin{tikzpicture}[baseline=-.65ex, scale=.5]
\node[int] (c) at (0.7,0){};
\node[int] (v1) at (0,-1) {};
\node[int] (v2) at (0,1) {};
\node[int] (v3) at (2.1,-1) {};
\node[int] (v4) at (2.1,1) {};
\node[int] (d) at (1.4,0) {};
\draw (v1) edge (v2) edge (v3)  edge (d) edge (c) (v2) edge (v4) edge (c) (v4) edge (d) edge (v3) (v3) edge (d) (c) edge (d);
\end{tikzpicture}.
\end{equation}
We determine the degree of a graph in $\GC_n$ by assuming that each vertex contributes by $\deg(\bullet) = n$
and each edge contributes by $\deg(\bullet\!\!-\!\!\bullet) = 1-n$
as in the case of hairy graphs.
We still assume that every vertex of a graph in $\GC_n$ is at least trivalent and we do not allow loops (edges with the same origin
and endpoint). The differential is defined by the blow-up of vertices again.

The space of homotopy automorphisms $\Aut_{\Top\Op}^h(\DOp_n^{\QQ})$ is the sum of the connected components
of the mapping spaces $\Map_{\Top\Op}^h(\DOp_n,\DOp_n)$
associated to the morphisms $\phi$ which are invertible in the homotopy category of operads.
Let $h: \Aut_{\Top\Op}^h(\DOp_n^{\QQ})\rightarrow\Aut_{\Hopf\Op}(\DGH_*(\DOp_n,\QQ))$
be the natural map which carries any such morphism
to the associated homology morphism.
For $n\geq 2$, we have a bijection $\Aut_{\Hopf\Op}(\DGH_*(\DOp_n,\QQ)) = \QQ^{\times}$
which is determined by taking the action of an automorphism $\phi\in\Aut_{\Hopf\Op}(\DGH_*(\DOp_n,\QQ))$
on the representative of the Poisson bracket operation $\lambda\in\PoisOp_n$
in the operad $\PoisOp_n = \DGH_*(\DOp_n)$. We get the following result:

\begin{thm}[{B. Fresse, V. Turchin, and T. Willwacher~\cite[Corollary 5]{FresseTurchinWillwacher}}]\label{thm:mapping-spaces-homotopy:automorphisms-kontsevich-graph-complex-model}
For each $\lambda\in\QQ^{\times}$, we have the identity:
\begin{equation*}
\pi_*(h^{-1}(\lambda)) = \DGH_*(\GC_n)\oplus\begin{cases} \QQ, & \text{if $*\equiv -n-1(4)$}, \\
0, & \text{otherwise}, \end{cases}
\end{equation*}
where $\GC_n$ denotes the Kontsevich graph complex.
\end{thm}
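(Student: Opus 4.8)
The plan is to read the statement off the hairy graph complex computation of the preceding results, specialized to the case $m=n$, and then to identify the resulting twisted graph homology with the Kontsevich graph complex $\GC_n$. First I would observe that $\Aut_{\Top\Op}^h(\DOp_n^{\QQ})$ is by definition the union of those connected components of $\Map_{\Top\Op}^h(\DOp_n,\DOp_n^{\QQ})$ consisting of invertible morphisms, so that Theorem~\ref{thm:mapping-spaces-homotopy:hairy-graph-model} (applied with $m=n$, together with the formality of Theorem~\ref{thm:rational-homotopy:en-operads-formality}) identifies it with the corresponding sub-union of components of the Maurer--Cartan space $\DGMC_{\bullet}(\HGC_{nn})$. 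Under this identification the homology map $h$ corresponds to the invariant recording the action of a Maurer--Cartan element on the class of the Poisson bracket $\lambda\in\PoisOp_n$, which is the coefficient attached to the elementary one-edge graph, and the fibre $h^{-1}(\lambda)$ is the sub-union of components on which this invariant equals $\lambda$. Corollary~\ref{cor:mapping-spaces-homotopy:homotopy-groups} then yields
\begin{equation*}
\pi_*(h^{-1}(\lambda)) = H_{*-1}(\HGC_{nn}^{\omega_{\lambda}}),
\end{equation*}
where $\omega_{\lambda}\in\DGMC_0(\HGC_{nn})$ is a Maurer--Cartan element representing the rescaling automorphism with ratio $\lambda$.

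Next I would dispose of the dependence on $\lambda$. The group $\QQ^{\times}$ of homology automorphisms acts on the hairy graph complex by rescaling the generators (weighting edges and hairs), and this action carries $\omega_1$ to a translate of $\omega_{\lambda}$ while intertwining the corresponding twisted differentials $\delta_{\omega_1}$ and $\delta_{\omega_{\lambda}}$. Hence the complexes $\HGC_{nn}^{\omega_{\lambda}}$ are mutually isomorphic, and it suffices to treat the canonical Maurer--Cartan element $\omega=\omega_1$ attached to the identity, that is, to the standard $L_{\infty}$-structure induced by the graph cooperad $\GraphOp_n^c$ of Proposition~\ref{prop:rational-homotopy:graph-cohomology-cooperad-qiso}.

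The heart of the argument is then to compute $H_*(\HGC_{nn}^{\omega})$ and to match it with $H_*(\GC_n)$ up to the announced correction. I would filter $\HGC_{nn}^{\omega}$ by the number of hairs. The internal differential $\delta$ preserves the hair count, so the $E_0$-page is the untwisted hairy complex $(\HGC_{nn},\delta)$, while the twisting term $[\omega,-]$ induces the $E_1$-differential, given by the standard leg-attaching operation. A one-hair graph is nothing but a graph of $\GC_n$ with one marked vertex carrying the hair; deleting the hair and using the $\pmod n$ orientation conventions identifies, up to the degree shift built into the definition of $\HGC_{nn}$, the one-hair subquotient with $\GC_n$ itself. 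Invoking Willwacher's structural theorem for the hairy graph complex in the equal-dimension case, the spectral sequence collapses onto this one-hair part, producing $H_*(\GC_n)$ together with a residual contribution coming from the graphs that carry no trivalence constraint, namely the single-loop graphs.

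The main obstacle, and the part demanding the most care, is the precise determination of this residual contribution. The extra summands are the classes of the loop graphs $L_j$ (cycles on $j$ bivalent vertices) which survive in the relaxed-valence hairy complex but are excluded from $\GC_n$; tracking the edge-orientation signs (an edge reversal contributing $(-1)^n$) together with the cyclic symmetry of each polygon shows that $L_j$ represents a nonzero class in exactly one residue class of $j$ modulo $4$, and the accompanying degree bookkeeping places the surviving classes precisely in degrees $*\equiv -n-1\pmod 4$, each contributing a single copy of $\QQ$. Carrying out this sign analysis rigorously, and verifying that imposing trivalence removes no genuine homology beyond these loop classes, is where the real work lies; the remaining steps are formal consequences of Theorem~\ref{thm:mapping-spaces-homotopy:hairy-graph-model} and Corollary~\ref{cor:mapping-spaces-homotopy:homotopy-groups}.
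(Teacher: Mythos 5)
Your argument follows the paper's proof essentially verbatim: reduce to the twisted hairy complex $\HGC_{n n}^{\omega}$ with $\omega = |$ via Theorem~\ref{thm:mapping-spaces-homotopy:hairy-graph-model} and Corollary~\ref{cor:mapping-spaces-homotopy:homotopy-groups}, compare with the bivalent-vertex variant $\GC_n^2$ through the one-hair map $\gamma\mapsto\gamma-$, and account for the extra $\QQ$ summands by the loop classes $L_*$ in degrees $\equiv -n-1\pmod 4$. The one caveat is that your filtration by hair number, taken with the $\delta$-homology on $E_0$ and hair-attachment on $E_1$, runs the spectral sequence in the less convenient order --- the collapse onto the one-hair part is transparent only when one first takes homology with respect to the hair-addition differential $[\omega,-]$, as in \cite[Proposition 2.2.9]{FresseWillwacher} --- but this is a presentational point rather than a gap.
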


\begin{proof}[Explanation and references]
We deduce this statement from the result of Theorem~\ref{thm:mapping-spaces-homotopy:hairy-graph-model},
by using that the identity morphism is represented by the Maurer--Cartan element such that $\omega = |$
in the hairy graph complex $\HGC_{n n}$.
We just consider versions of the graph complexes $\GC_n^2$ and $\HGC_{m n}^2$
where bivalent vertices are allowed.
We have $\HGC_{m n}^2\sim\HGC_{m n}$ for any $n\geq m\geq 2$, whereas for the graph complex $\GC_n^2$, we have:
\begin{equation}
\DGH_*(\GC_n^2) = \DGH_*(\GC_n)\oplus\begin{cases} \QQ L_*, & \text{if $*\equiv -n-1(4)$}, \\
0, & \text{otherwise}, \end{cases}
\end{equation}
where $L_*$ denotes the homology classes of graphs
of the form:
\begin{equation}
L_* = \begin{tikzpicture}[baseline=-.65ex]
\node[int] (v1) at (0:1) {};
\node[int] (v2) at (72:1) {};
\node[int] (v3) at (144:1) {};
\node[int] (v4) at (216:1) {};
\node (v5) at (-72:1) {$\cdots$};
\draw (v1) edge (v2) edge (v5) (v3) edge (v2) edge (v4) (v4) edge (v5);
\end{tikzpicture}
\end{equation}
We easily see that the operation $[\omega,-]$ in the differential $\delta_{\omega} = \delta + [\omega,-]$
of the twisted complex $\HGC_{n n}^{2\omega}$
associated to the Maurer--Cartan element $\omega = |$
is given by the addition of a hair $|$ to any graph $\gamma\in\HGC_{n n}^2$.
We can then use a spectral sequence to check that we have a quasi-isomorphism $\QQ|\oplus\QQ[-1]\otimes\GC_n^2\xrightarrow{\sim}\HGC_{n n}^2$
where we consider the mapping $\gamma\mapsto\gamma-$
which associates a graph with one hair $\gamma-\in\HGC_{n n}^2$ to any graph $\gamma\in\GC_n$.
We refer to~\cite[Proposition 2.2.9]{FresseWillwacher} for the detailed line of arguments.
\end{proof}

In the case $n=2$, we have $\DGH_0(\GC_n) = \grt$ by a result of T. Willwacher,
where $\grt$ is the graded Grothendieck--Teichm\"uller Lie algebra (see~\cite{WillwacherGraphs}).
Therefore, in this case, the result of Theorem~\ref{thm:mapping-spaces-homotopy:automorphisms-kontsevich-graph-complex-model}
reflects the relation that we obtained in Theorem~\ref{thm:e2-operads:rational-homotopy-automorphisms}:
\begin{equation*}
\Aut_{\Top\Op}^h(\DOp_2^{\QQ})\sim\GT(\QQ)\ltimes\SO(2)^{\QQ},
\end{equation*}
where $\GT(\QQ)$ is the Grothendieck--Teichm\"uller group.

\section{Outlook}\label{sec:outlook}

Throughout this survey, we have focused on the study of the homotopy of $E_n$-operads themselves.
But one can use variants of the definition of an $E_n$-operad
to associate operadic right module structures
to any $n$-manifold $M$.
To be specific, the Fulton--MacPherson compactifications considered in~\S\ref{background:fulton-macpherson-operad}
have a natural generalization for the configuration spaces of manifolds $\FOp(M,r)$,
and when $M$ is a framed manifold, this construction returns a collection of spaces $\FMOp_M = \{\FMOp_M(r),r\in\NN\}$
which inherits the structure of a right module (in the operadic sense)
over the Fulton--MacPherson operad $\FMOp_n$.

This object is equivalent to constructions used by Ayala--Francis
in the definition
of the factorization homology of manifolds (see Ayala--Francis's paper, in this handbook volume,
for a survey of this subject).
In particular, one can use a relative composition product of the object $\FMOp_M$ over the operad $\FMOp_n$
to compute the factorization homology of any framed manifold $M$.
The methods used by Kontsevich to prove the formality of $E_n$-operads have been used by several authors to define models
of the rational homotopy type of this right module $\FMOp_M$,
and hence, to tackle the rational homotopy computations in factorization homology theory. To cite a few works on this subject,
let us mention that a graph complex model of the object $\FMOp_M$,
which extends the graph cooperad of~\S\ref{rational-homotopy:graph-cooperad}
when $M$ is a simply connected compact manifold without boundary,
is defined by Campos--Willwacher in~\cite{CamposWillwacher},
while an extension of Arnold's presentation is used by Idrissi in~\cite{Idrissi}
to get a small model of the object $\FMOp_M$. Idrissi's result provides a generalization
of Knudsen's description of the factorization homology for higher enveloping algebras
of Lie algebras~\cite{Knudsen}.
The paper~\cite{CamposIdrissiLambrechtsWillwacher} provides an extension of the constructions of~\cite{CamposWillwacher,Idrissi}
for manifolds with boundary, while the paper~\cite{CamposDucoulombierIdrissiWillwacher} addresses an extension
of the definition of these operadic module structures by using a framed version of the operads
of little discs.

In~\S\S\ref{sec:rational-homotopy}-\ref{sec:mapping-spaces-homotopy}, we entirely focus on the rational homotopy theory framework,
but we may wonder which information we may still retrieve by our methods
in positive characteristic.
For instance, partial formality results have been obtained by Cirici--Horel in~\cite{CiriciHorel}
when we take an arity-wise truncation of operads below the characteristic
of the coefficients.
In fact, the $E_n$-operads are not formal as symmetric operads in chain complexes in positive characteristic,
because their components are not formal as representations of the symmetric groups.
Nevertheless, we may wonder whether $E_n$-operads
are formal as non-symmetric operads, which is enough for the study of mapping spaces
over an $E_1$-operad.
The case $n>2$ of this question is still open, but Salvatore has proved in~\cite{SalvatoreNonFormality}
that $E_2$-operads are not formal as non-symmetric operads
over $\FF_2$.

\bibliographystyle{plain}
\bibliography{EnOperadHomotopy}

\end{document}